\documentclass[reqno, 12pt]{amsart}

\usepackage[]{_pascal/_pascal}

\hypersetup{
    pdftitle={},
    pdfauthor={J.~Pascal Gollin, Martin Milanič, Laura Ogrin},
    debug=true,
}

\newtheorem*{condition*}{Condition}
\newtheorem{condition}[thm]{Condition}
\zcRefTypeSetup{condition}{
    Name-sg = Condition,
    name-sg = condition,
    Name-pl = Conditions,
    name-pl = conditions,
}

\newcommand{\Gc}{\overline{G}}
\DeclareMathOperator{\diam}{diam}
\newcommand{\cP}{\mathcal{P}}
\newcommand{\R}{\mathbb{R}}

\usepackage[normalem]{ulem}

\newcommand{\turan}[2]{\ensuremath{T_{ #1, #2 }}}
\newcommand{\doublestar}[2]{\ensuremath{S_{#1, #2}}}
\newcommand{\triplestar}[3]{\ensuremath{S_{#1, #2, #3}}}

\begin{document}

\author[Gollin]{J.~Pascal Gollin$^\dagger$}
\address[$\dagger$]{FAMNIT, University of Primorska, Koper, Slovenia.}
\email{\tt pascal.gollin@famnit.upr.si}

\author[Milani\v{c}]{Martin Milani\v{c}$^{\dagger,\ddagger}$}
\address[$\ddagger$]{IAM, University of Primorska, Koper, Slovenia.}
\email{\tt martin.milanic@upr.si}

\author[Ogrin]{Laura Ogrin$^{\dagger}$}
\email{\tt laura.ogrin@famnit.upr.si}

\title[]{Minimal toughness in subclasses of weakly chordal graphs}

\date{\today}

\keywords{Toughness, minimally tough graph, $P_4$-free graph, complete multipartite graph, co-chordal graph, complement of forest, join of graphs}

\subjclass[2020]{05C42, 05C40, 05C70, 05C07, 05C38, 05C69}

\begin{abstract}
The \emph{toughness} of a graph $G$ is defined as the largest real number $t$ such that for any set $S\subseteq V(G)$ such that $G-S$ is disconnected, $S$ has at least $t$ times more elements than $G-S$ has components (unless $G$ is complete, in which case the toughness is defined to be infinite).
A graph is said to be \emph{minimally tough} if deleting any edge decreases the toughness.
It is an open question whether there exists a minimally tough non-complete chordal graph with toughness exceeding~$1$.
We initiate the study of minimally tough graphs in the larger class of weakly chordal graphs.
We obtain complete classifications of minimally tough graphs in the following subclasses of weakly chordal graphs: co-chordal graphs whose complement has diameter at least~$3$, net-free co-chordal graphs, complements of forests, $P_4$-free graphs, and complete multipartite graphs.
Our approach leads to simple proofs of two results on minimally tough graphs due to Dallard, Fernández, Katona, Milanič, and Varga.
\end{abstract}

\maketitle

\section{Introduction}
\label{sec:intro}

Toughness is a measure of connectedness of graphs that, roughly speaking, gives a lower bound on the ratio between the cardinality of any separator in the graph and the number of components of the remaining graph.
More precisely, for a real number $t$, a graph $G$ is said to be \emph{$t$-tough} if ${\abs{S} \geq t \cdot c(G-S)}$ holds for every set ${S \subseteq V(G)}$ such that~${G-S}$ is disconnected, where~${c(H)}$ denotes the number of connected components of a graph~$H$. 
Every Hamiltonian graph is $1$-tough, and Chv\'atal conjectured that there exists a real number~$t_0$ such that every $t_0$-tough graph is Hamiltonian (see~\cite{Chva73,Bauer2006,Broersma2015}).
This conjecture is still open.

The \emph{toughness} of a graph~$G$, denoted~$\tau(G)$, is defined as the maximum real number~$t$ such that~$G$ is $t$-tough (unless~$G$ is complete, in which case the toughness is defined to be~$\infty$). 
Since deleting an edge from a graph $G$ cannot increase the toughness, every graph $G$ with toughness $t$ contains a minimal spanning subgraph with toughness~$t$.
Such a subgraph is necessarily \emph{minimally $t$-tough}, that is, deleting any edge from the graph results in a graph with toughness strictly smaller than $t$.
Consequently, Chv\'atal's conjecture is equivalent to the existence of a real number $t_0$ such that every minimally $t_0$-tough graph is Hamiltonian.
This motivates the study of graphs that are minimally $t$-tough for some $t$, or, shortly, \emph{minimally tough} graphs.

This problem is far from trivial.
For every positive rational number $t$, every graph is an induced subgraph of a minimally $t$-tough graph (see~\cite{Katona2018}) and the problem of recognizing minimally $t$-tough graphs is \textsf{DP}-complete\footnote{The complexity class \textsf{DP}, introduced by Papadimitriou and Yannakakis in 1984 (see~\cite{Papa1984}) is the class of all languages that can be expressed as the intersection of a language in \textsf{NP} and a language in \textsf{coNP}.} (see~\cite{Kato21}). 
For every positive rational number $t\le 1$, the complexity of recognizing $t$-tough bipartite graphs is \textsf{coNP}-complete (see~\cite{Katona2023}); however, to the best of our knowledge, the complexity of recognizing minimally $t$-tough bipartite graphs is open.

Minimally $t$-tough graphs were characterized for classes of claw-free graphs for $t < 3/2$ (see~\cite{Katona2018,Kato23}), split graphs for all $t$ (see~\cite{Kato23}), chordal graphs for $t\le 1$ (see~\cite{Kato23,Kato24}), interval graphs for all $t$ (see~\cite{Kato24,Dall23}), and strongly chordal graphs for all $t$, as a combination of results from~\cite{Kato23,Kato24,Dall23} (see also~\cite[Corollary 3.8]{ogrin2025characterizations}). 
Katona and Varga~\cite{Kato23} showed that the following graph classes can be recognized in polynomial time: minimally $t$-tough, split graphs for any positive rational number $t$, minimally $t$-tough, $2K_2$-free graphs for any positive rational number $t$, and minimally $t$-tough, claw-free graphs for any positive rational number $t\le 1$.
Furthermore, the fact that toughness of graphs with bounded treewidth can be computed in polynomial time (see~\cite{KK2024}) leads to a polynomial-time algorithm for recognizing minimally $t$-tough graphs with bounded treewidth, for any positive rational number $t$.
For the class of $2K_2$-free graphs, further results on minimally $t$-tough graphs were obtained by Ma, Hu, and Yang (see~\cite{MR4674928,MR4927512}).

Dallard et al.~\cite{Dall23} conjectured that for any real number $t> 1/2$, there exists no minimally $t$-tough, chordal graph.
In fact, while it is known that there is no minimally $t$-tough chordal graph with $1/2<t\le 1$ (see~\cite{Kato23}), it is not known if there exists a real number~$\hat t$ such that each minimally $t$-tough chordal graph with finite toughness satisfies $t\le \hat t$.

Kriesell conjectured that every minimally $1$-tough graph contains a vertex of degree~$2$ (see~\cite{Krie03}).
A more general conjecture, called the generalized Kriesell's conjecture, proposed by Katona and Varga (see~\cite{Kato23}), states that for every $t>0$, every minimally $t$-tough graph has a vertex of degree~$\ceil{2t}$.
The generalized Kriesell's conjecture was disproved by Zheng and Sun~\cite{Zhen24}, even for the class of $2K_2$-free graphs.
It has also been disproved for line graphs (and hence, for claw-free graphs), by Hasanvand~\cite{hasanvand2025existenceminimallytoughgraphs}.
On the other hand, the conjecture has been verified for classes of split graphs~\cite{Kato23}, chordal graphs for $t\le 1$ (see~\cite{Kato23,Kato24}), and claw-free graphs for $t < 2$ (see~\cite{Katona2018,Kato23, Ma2023}; see also~\cite{Ma2025}).
Kriesell's conjecture is still open, but has been verified, in addition to chordal graphs and claw-free graphs, for $2K_2$-free graphs (see~\cite[Theorem 1.7]{MR4674928}), for graphs with independence number at most~$3$~\cite{MR4866588}, and for some other cases~\cite{Cao2026}.
See~\cite[Section 3.2]{ogrin2025characterizations} for more details.\\

The starting point of our research is the aforementioned conjecture of Dallard et al.~\cite{Dall23} on minimally tough, chordal graphs.
We explore minimal toughness in a more general class of graphs, the class of \emph{weakly chordal graphs}, which are defined as graphs that exclude cycles of length at least $5$ and their complements as induced subgraphs (see, e.g.,~\cite{MR1991712,MR815392,MR2335297}).
The class of weakly chordal graphs is closed under complementation and contains, besides all chordal graphs, also their complements (known as \emph{co-chordal} graphs), as well as all $P_4$-free graphs, that is, graphs not containing the four-vertex path as an induced subgraph. 

While, as mentioned before, no minimally tough chordal graphs with (finite) toughness exceeding $1/2$ are known, we show that there exist minimally tough weakly chordal graphs with arbitrarily large finite toughness.
This is a consequence of our main results, complete classifications of minimally tough graphs in each of the following subclasses of weakly chordal graphs:
\begin{itemize}
    \item co-chordal graphs whose complements have diameter different from~$2$,
    \item net-free co-chordal graphs, 
    \item complements of forests,
    \item $P_4$-free graphs, and
    \item complete multipartite graphs.
\end{itemize}
In particular, we show that there exist minimally tough complete multipartite graphs with arbitrarily large toughness.

To state our results in detail, we need to introduce some definitions. 
We refer to complete graphs and edgeless graphs as \emph{trivially} minimally tough and to all other minimally tough graphs as \emph{non-trivially} minimally tough.
A graph is \emph{complete multipartite} if it admits a partition of its vertex set into subsets called \emph{parts} such that two distinct vertices of the graph are adjacent if and only if they belong to different parts.
For a sequence of positive integers ${n_1 \leq n_2 \leq \ldots \leq n_k}$, we denote by~${K_{n_1,\ldots,n_k}}$ the complete multipartite graph with~$k$ parts ${V_1,\ldots, V_k}$ such that ${\abs{V_i} = n_i}$ for all~${i \in [k]}$.
We denote by~$\turan{n}{k}$ the complete multipartite graph with~$n$ vertices and~$k$ parts, whose parts are as equal in cardinality as possible.\footnote{These graphs are often referred to as \emph{Tur\'{a}n graphs}.} 
We will mostly be interested in the case when all parts have cardinality~$2$, except for possibly one, which has cardinality~$1$, that is, in the graphs $\turan{2\ell}{\ell}$ and $\turan{2\ell-1}{\ell}$, where $\ell\ge 2$.

\begin{theorem}
    \label{P4free:mintough}
    A $P_4$-free graph is non-trivially minimally tough if and only if it is isomorphic to one of~$K_{2,3}$, $K_{1,\ell}$, $\turan{2\ell}{\ell}$, or $\turan{2\ell-1}{\ell}$, for some~${\ell \geq 2}$. 
\end{theorem}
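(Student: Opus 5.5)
We would prove the forward direction via the structure of $P_4$-free graphs (cographs): every $P_4$-free graph on at least two vertices is either disconnected or the join of two smaller $P_4$-free graphs. A disconnected graph has toughness $0$. It is minimally tough only if no edge can be deleted without the toughness dropping below $0$, which is impossible, so a disconnected minimally tough graph must be edgeless and hence trivial. We may therefore assume $G=G_1\vee G_2\vee\dots\vee G_k$ with $k\ge 2$, where each $G_i$ is a co-component, i.e., it is either a single vertex or a disconnected $P_4$-free graph.

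\textbf{Key lemma.} Let $G$ be minimally $t$-tough and let $uv$ be an edge. Then there is a set $S=S(uv)\subseteq V(G)\setminus\{u,v\}$ such that $u$ and $v$ lie in different components of $(G-uv)-S$, and $\abs{S}<t\cdot c((G-uv)-S)$. Moreover, $c(G-S)=c((G-uv)-S)-1$. We apply this to edges joining two co-components $G_i$ and $G_j$. Since $G-S$ has a component containing both $u$ and $v$, in $G-uv-S$ all surviving vertices outside $G_i\cup G_j$ must already lie in $S$. In fact $S$ must contain $V(G)\setminus(V(G_i)\cup V(G_j))$ together with everything in $G_i\cup G_j$ except $u$ and $v$, because any further surviving vertex would be adjacent to $u$ or $v$ across the join. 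The exceptions are vertices of $G_i$ nonadjacent to $u$ that are also nonadjacent to $v$, but $v\in G_j$ sees all of $G_i$. So $S=V(G)\setminus\{u,v\}$, giving two components and the inequality $n-2<2t$.

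On the other hand, $\tau(G)\le \frac{n-\alpha}{\alpha}$ via the separator $V\setminus I$ for a maximum independent set $I$ (when $\alpha\ge2$). Combining these gives $\alpha(n-2)<2(n-\alpha)$, which forces $\alpha\le 2$ unless $n$ is small. Small cases such as $K_{1,\ell}$ and $K_{2,3}$ have to be checked separately: in a star $t$ is small, and there the edge-removal separator is $S=\{\text{centre}\}$ rather than one of the join type. I need to redo the lemma to allow $u$ and $v$ to be separated with $S$ smaller when a co-component is a single vertex: if $G_i=\{u\}$ is a universal vertex, $S$ need not contain much. So the correct case split is this.

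\emph{Case (a): $G$ has a universal vertex $u$.} Deleting an edge $uv$ and using the lemma, every other vertex is adjacent to $u$ or $v$, so $S\supseteq N(v)\setminus\{u\}$ and $v$ becomes isolated. Comparing with $\tau(G)\le \abs{S}/c$ for separators containing $u$ should force $G-u$ to be edgeless, giving $K_{1,\ell}$; otherwise $G$ is complete.

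\emph{Case (b): no universal vertex.} Every co-component is a disconnected cograph with $\alpha_i\ge2$. Edges inside a co-component $G_i$ are handled by the lemma too. The separator must contain all of $V\setminus V(G_i)$ and cut $G_i$, giving $t\le (n-n_i+s)/c$ inside $G_i$. Using $\tau(G)=\min_i$ over separators of the form $(V\setminus V(G_i))\cup S_i$, an exchange argument shows each $G_i$ must be edgeless. Hence $G$ is complete multipartite.

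Finally, for $G=K_{n_1,\dots,n_k}$ with $n_1\le\dots\le n_k$ and $n_k\ge2$, we have $\tau=(n-n_k)/n_k$, since the best separator removes all parts but one. Deleting an edge between parts $V_i$ and $V_j$ must lower toughness. The new best separator is $V\setminus(V_i\cup V_j)$, together with $V_i\cup V_j$ minus $\{u,v\}$ handled as above, yielding the inequality $(n-2)/2<\dots$ or $(n-n_i-n_j+\dots)/(\dots)<(n-n_k)/n_k$. Working through this inequality over all pairs $i,j$ forces $n_k\le2$ with at most one part of size~$1$ (giving $\turan{2\ell}{\ell}$ and $\turan{2\ell-1}{\ell}$), or the exceptional $k=2$ cases $K_{1,\ell}$ and $K_{2,3}$. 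The converse is a direct verification with the same formulas: for example, $\tau(\turan{2\ell}{\ell})=\ell-1$, and deleting an edge $uv$ gives $S=V\setminus\{u,v\}$ with ratio $(2\ell-2)/2<\ell-1$. Checking $\turan{2\ell}{\ell}$ this way shows the ratio equals $\ell-1$ rather than dropping below it, so a sharper separator is needed: remove the partners of $u$ and $v$ plus all other parts, then add one more component. This step, together with the reduction to complete multipartite graphs in Case (b), is the main obstacle, and I would first attempt it by explicit computation of all separators of $K_{n_1,\dots,n_k}-uv$.
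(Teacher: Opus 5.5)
\textbf{There is a genuine gap.} Your key lemma is correct (it is one direction of the criterion of Dallard et al.). What you derive from it for an edge $uv$ with $u\in V_i$, $v\in V_j$, $i\neq j$, is not. The set $S$ need not be $V(G)\setminus\{u,v\}$. Surviving vertices of $V_i\setminus\{u\}$ attach to $v$, and surviving vertices of $V_j\setminus\{v\}$ attach to $u$, so only \emph{one} of these two sets must be deleted entirely. For instance, $S=N_G(u)\setminus\{v\}$ isolates $u$ in $(G-uv)-S$, and $v$ together with $V_i\setminus N_G[u]$ forms the other component. What is actually forced is $c((G-uv)-S)=2$ and $|S|\ge\min\{d_G(u),d_G(v)\}-1$. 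So the correct consequence is $\min\{d_G(u),d_G(v)\}<2t+1$. This is precisely the paper's inequality $\kappa_G(u,v)<2t+1$ for dominating edges, combined with $\kappa_G(u,v)=\min\{d_G(u),d_G(v)\}$ in a join. Your inequality $n-2<2t$ holds in no non-complete graph, since $2t\le\kappa(G)\le n-2$. Indeed, $\alpha(n-2)<2(n-\alpha)$ simplifies to $\alpha<2$. So this line of argument would exclude every graph in the list you want to obtain.

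The case split does not repair this. The conclusion of Case (a) is false. $T_{2\ell-1,\ell}$ with $\ell\ge 3$ has a universal vertex $u$ (its singleton part), yet $G$ is not complete and $G-u\cong T_{2\ell-2,\ell-1}$ is not edgeless; for example, $T_{5,3}=K_1*C_4=W_5$. Your own observation that $v$ becomes isolated only yields that $G-u$ is regular of degree $\lceil 2t\rceil-1$. In Case (b), the unspecified ``exchange argument'' is exactly the missing idea. The paper obtains it from the corrected inequality by fixing $u$ of \emph{maximum} degree, say $u\in V_i$.
\begin{itemize}
\item For $j\neq i$, $G[V_j]$ is disconnected, since otherwise $G$ would have two universal vertices, which the same inequality forbids.
\item For every $v\in V_j$ we have $\lceil 2t\rceil\le\kappa(G)\le d_G(v)=\min\{d_G(u),d_G(v)\}<2t+1$, so $d_G(v)=\lceil 2t\rceil$.
\item The separator $V(G)\setminus V_j$ gives $t\le a/2$ with $a=|V(G)\setminus V_j|$. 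This forces $d_{G[V_j]}(v)=\lceil 2t\rceil-a\le 0$, so $V_j$ is independent.
\item Then $a=d_G(v)<2t+1\le 2a/b+1$ with $b=|V_j|$ gives $b\le 3$ when $a\ge 2$.
\item A short degree analysis of $G-V_j$, again using the maximality of $d_G(u)$, produces the list.
\end{itemize}

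Finally, your converse check uses the wrong separator, as you noticed. For an edge $uv$ with $u\in V_i$, $v\in V_j$, $i<j$, take $N_G(v)\setminus\{u\}$. Its ratio is $(n-n_j-1)/2\le(n-n_2-1)/2$. This is below $t=n/n_k-1$ whenever $n-n_2<2n/n_k-1$, which holds for all four families. Conversely, this inequality singles them out among complete multipartite graphs.
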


Since all complete multipartite graphs are $P_4$-free and all minimally tough $P_4$-free graphs are complete multipartite, \zcref{P4free:mintough} implies the following classification of minimally tough complete multipartite graphs.

\begin{theorem}
    \label{complete-char-multipar}
    A complete multipartite graph is non-trivially minimally tough if and only if it is isomorphic to one of~$K_{2,3}$, $K_{1,\ell}$, $\turan{2\ell}{\ell}$, or $\turan{2\ell-1}{\ell}$ for some~${\ell \geq 2}$. 
\end{theorem}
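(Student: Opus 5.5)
Since every complete multipartite graph is $P_4$-free, \zcref{complete-char-multipar} follows at once from \zcref{P4free:mintough}. A self-contained argument would go as follows. Let $G=K_{n_1,\ldots,n_k}$ with $n_1\le\cdots\le n_k$ and $N=\sum n_i$.

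\emph{Step 1: toughness.} If $k\ge 2$ and $n_k\ge 2$, then $G$ is non-complete. Any separator $S$ must contain all parts but one, say all except $V_i$. Then $G-S$ is the edgeless graph on $V_i\setminus S$. Hence the optimal ratio is attained by taking $S=V(G)\setminus V_k$, and
\[
\tau(G)=\frac{N-n_k}{n_k}.
\]
The case $k=1$ gives an edgeless graph, which is trivial.

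\emph{Step 2: edge deletion.} Delete an edge $uv$ with $u\in V_i$ and $v\in V_j$. The new graph $G'$ has new separators. The key ones are: $S=V(G)\setminus(V_i\cup V_j)$, which leaves $G'[V_i\cup V_j]$; and sets that separate $u$ from $v$ by additionally deleting $V_j\setminus\{v\}$ (or $V_i\setminus\{u\}$). The most effective candidate is
\[
S=V(G)\setminus(V_i\cup\{v\}),
\]
of size $N-n_i-1$, which leaves $n_i+1$ isolated vertices. The same construction with $i$ and $j$ swapped also applies. Other separators can only be cheaper by using the missing edge, so I would check that these are the only ones that matter.

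So $G$ is minimally tough iff for every pair $i\ne j$,
\[
\min\left(\frac{N-n_i-1}{n_i+1},\ \frac{N-n_j-1}{n_j+1}\right)<\frac{N-n_k}{n_k}.
\]
It is enough to test the smaller of $n_i$ and $n_j$ against its best option. Equivalently, for every pair there must be an $m\in\{n_i,n_j\}$ with
\[
\frac{N}{m+1}<\frac{N}{n_k},
\]
that is, $m+1>n_k$, so $m\ge n_k$. Hence every pair of parts contains a part of maximum size $n_k$. (The corner case $S$ becoming empty, where some part is a single vertex and $G-S$ is just $G'[V_i\cup V_j]$, needs separate treatment.)

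\emph{Step 3: solve.} Every pair of parts contains a maximum part, so all parts but at most one have size $n_k$. Now check small cases directly to handle disconnections within $V_i\cup V_j$ when $S$ is small. For the stars $K_{1,\ell}$, every edge deletion disconnects the graph. For $K_{2,3}$, deleting an edge creates a degree-1 vertex, giving a separator of size 1 with 2 components, so the ratio $1/2<2/3$. For $T_{2\ell}$ and $T_{2\ell-1}$, the check is direct. The remaining configurations are all parts equal to $n_k=a\ge 3$ with $k\ge 2$, or one part smaller. Using a finer separator — for instance, deleting $V_j\setminus\{v\}$ together with the other parts, which isolates $u$ less effectively — I expect these to fail when $a\ge3$ except $K_{2,3}$ and stars.

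The main obstacle is this last step: identifying the exact separator in $G-uv$ whose ratio is below $\tau(G)$. Doing so gives a strict criterion that forces $n_k\le 2$, apart from the exceptional families $K_{1,\ell}$ and $K_{2,3}$. I would carefully tabulate $\tau(G-uv)$ via the sets $V(G)\setminus(V_i\cup\{v\})$ and $V(G)\setminus(V_i\cup V_j)\cup(V_j\setminus\{v\})$ and compare.
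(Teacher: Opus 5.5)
Your opening sentence is exactly the paper's proof. Every complete multipartite graph is $P_4$-free, and every graph in the list is complete multipartite, so both directions follow immediately from \zcref{P4free:mintough}; that alone suffices.

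The ``self-contained'' sketch that follows, however, is wrong as written, and should be dropped or repaired.

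\emph{The error in Step~2.} For $S=V(G)\setminus(V_i\cup\{v\})$, the graph $(G-uv)-S$ is not a set of $n_i+1$ isolated vertices. It consists of the isolated vertex $u$ together with the star with centre $v$ and leaves $V_i\setminus\{u\}$. So it has exactly two components, and the ratio is $\frac{N-n_i-1}{2}$.

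\emph{Why this breaks the argument.} Your criterion (every pair of parts contains a part of size $n_k$) is therefore not equivalent to minimal toughness. For example, $K_{3,3}$ and $K_{2,4}$ satisfy it, yet $\tau(K_{3,3}-e)=1=\tau(K_{3,3})$ and $\tau(K_{2,4}-e)=\frac{1}{2}=\tau(K_{2,4})$. Step~3 then tries to exclude such graphs using further separators. This contradicts the ``iff'' you had just asserted, and the exclusion is only stated as an expectation, so the sketch never reaches the list.

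\emph{The correct bookkeeping.} It is short:
\begin{enumerate}
\item If $S$ separates $G$, then $G-S$ is edgeless by \zcref{lem:sep}, so deleting $uv$ does not change $c(G-S)$.
\item If $S$ separates $G-uv$ but not $G$, then $u,v\notin S$ and $uv$ is a bridge of $G-S$. So $u$ and $v$ have no common neighbour in $G-S$. This forces $G-S$ to be the complete bipartite graph between $V_i\setminus S$ and $V_j\setminus S$ with one side of size~$1$. Hence $c((G-uv)-S)=2$ and $|S|\ge N-1-\max\{n_i,n_j\}$.
\end{enumerate}
It follows that $\tau(G-uv)=\min\{\tau(G),\frac{1}{2}(N-1-\max\{n_i,n_j\})\}$. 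The binding pair is $\{V_1,V_2\}$, so $G$ is minimally tough if and only if $N-n_2<\frac{2N}{n_k}-1$.

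This is precisely condition~(3) of \zcref{P4free:mintough-new}. The paper reaches the same condition via $\kappa_G(u,v)=N-\max\{n_i,n_j\}$ from \zcref{lem:kuv}, combined with \zcref{mintough} and \zcref{cor:dom2}.

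Solving the inequality gives exactly the listed graphs:
\begin{itemize}
\item If $n_k=2$, it forces $n_2=2$, which gives $\turan{2\ell}{\ell}$ and $\turan{2\ell-1}{\ell}$.
\item If $n_k\ge 3$, it forces $k=2$ and $n_1(n_k-2)<n_k$, which gives $K_{1,\ell}$ and $K_{2,3}$.
\end{itemize}
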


Given a graph $G$, the \emph{co-diameter} of $G$ is the diameter of the complement of $G$. 

\begin{theorem}[store=cochordal]
\label{characterization-co-chordal}
     A co-chordal graph with co-diameter at least~$3$ is non-trivially minimally tough if and only if it is isomorphic to one of  $P_4$, $K_{2,3}$, $K_{1,\ell}$, $\doublestar{k}{\ell}$, $\turan{2\ell}{\ell}$, or $\turan{2\ell-1}{\ell}$ for some ${k \geq 1}$ and~${\ell \geq 2}$. 
 \end{theorem}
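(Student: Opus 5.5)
Let $G$ be co-chordal with co-diameter at least~$3$ (we read this as including $\infty$, i.e.\ disconnected complement), and let $H=\Gc$, a chordal graph. We split into two cases according to whether $H$ is disconnected or connected of diameter at least~$3$.

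\emph{Case 1: $H$ is disconnected.} Then $G$ is the join of the complements of the components of $H$, and each of these complements is again co-chordal. Every separator of a join must contain all but one side entirely. So the toughness of $G$ is governed by the separators of $G$ of the form $(V(G)\setminus V(C))\cup S'$, where $C$ is a component of $H$ and $S'$ separates $\overline{C}$. We then analyse minimality edge by edge. An edge inside one side $\overline{C}$ must be critical for a separator inside that side. An edge between two sides must be critical as well, and deleting a join edge $uv$ creates new separators containing everything except $u$ and $v$ plus separators of their sides. The expected outcome is that every side must be edgeless, i.e.\ every component of $H$ is complete. Then $G$ is complete multipartite, hence $P_4$-free, and \zcref{complete-char-multipar} gives exactly $K_{2,3}$, $K_{1,\ell}$, $\turan{2\ell}{\ell}$ and $\turan{2\ell-1}{\ell}$. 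To prove that the sides are edgeless, suppose $\overline{C}$ has an edge $xy$. Any tough set certifying criticality of $xy$ in $G-xy$ contains all other sides, so its ratio is at least $(n-\abs{C})/\abs{C}$. I would compare this with the toughness of $G$, which is at most the ratio obtained from some side, and derive a contradiction using a chordality-based bound on separators in $\overline{C}$. This is where I expect the main technical work.

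\emph{Case 2: $H$ is connected with $\diam(H)\ge 3$.} Choose $a,b$ with $d_H(a,b)\ge 3$. Then $N_H[a]\cap N_H[b]=\emptyset$, so in $G$ the vertices $a$ and $b$ are adjacent and every other vertex is adjacent to $a$ or to $b$. Hence $\{a,b\}$ is a dominating edge of $G$, and $G$ has diameter at most~$3$. I would show that minimal toughness forces $\tau(G)\le 1/2$ or forces a very rigid structure. Every vertex $w\ne a,b$ sees $a$ or $b$, so $\{a,b\}$ together with few vertices disconnects little, and this should yield $\tau(G)\ge$ something. Conversely, a vertex at co-distance~$3$ should produce a cut vertex, or a cut set of size $1$ or $2$ with many components.

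The plan is to use the criticality of every edge. For each edge $e$ there is a set $S_e$ with $\abs{S_e}<\tau\, c((G-e)-S_e)$, where $e$ joins two distinct components of $(G-e)-S_e$. This is the standard lemma, which I assume from the preliminaries on minimally tough graphs. Apply it to edges incident to $a$ or $b$ and to edges among the non-neighbours. Since $H$ is chordal and connected, $\overline{H}$ restricted to $V\setminus\{a,b\}$ is complicated, so instead I would exploit simplicial vertices of $H$: such a vertex $s$ has an $H$-clique neighbourhood, so in $G$ its non-neighbours form an independent set. The aim is to show $\tau(G)\le 1/2$ and that $G$ is a tree. For a tree, minimal toughness holds exactly when it is a star or a double star $\doublestar{k}{\ell}$, with $P_4=\doublestar{1}{1}$. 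Moreover $K_{1,\ell}$ has disconnected complement, so it is handled by Case 1. The expected key step is showing that $\{a,b\}$ forms a bridge-like core: every vertex other than $a,b$ is a leaf attached to $a$ or $b$. For this I would argue that any edge $uv$ with $u,v\notin\{a,b\}$ is non-critical, because $S_{uv}$ must separate $u$ from $v$ in $G-uv$, yet $u$ and $v$ are linked through $a$ or $b$, forcing $a,b\in S_{uv}$ or a common neighbour in $S_{uv}$. A counting argument then shows that removing $uv$ does not lower the toughness.

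Finally, the converse direction is to check that each listed graph is minimally tough and satisfies the hypotheses. This is routine: double stars have complement of diameter~$3$, and the complete multipartite graphs are covered by \zcref{complete-char-multipar}. I expect the main obstacle to be the toughness comparison in Case 2, namely ruling out edges among non-dominating vertices, since it needs a precise description of the toughness-achieving separators.
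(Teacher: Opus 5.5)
Your plan has a genuine gap in each case. Both key steps rely on showing that some edge \emph{other than} the dominating edge is non-critical, and both are refuted by small examples. The idea that actually drives the proof, criticality of the dominating (join) edges themselves, is missing.

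\textbf{Case 1.} You want a contradiction from the criticality of an edge $xy$ inside a side $\overline{C}$, but such edges can be genuinely critical. Take the paw $K_1*(K_2+K_1)$. Its complement $K_1+P_3$ is chordal and disconnected, and $\tau=1/2$. Deleting the edge $xy$ of the $K_2$ gives $K_{1,3}$ of toughness $1/3$, so $xy$ is critical. The paw fails to be minimally tough only because deleting a join edge $vx$ gives $P_4$, whose toughness is still $1/2$. So the contradiction must come from join edges. Every join edge $uv$ is dominating, so no separator of $G$ separates $u$ from $v$ in $G-uv$, and criticality forces $\kappa_G(u,v)=\lceil 2t\rceil$ (\zcref{cor:dom-edge-cond1}). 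Across a join, $\kappa_G(u,v)=\min\{d_G(u),d_G(v)\}$ (\zcref{lem:kuv}). Take $u$ of maximum degree and $Q$ a component of $\Gc$ avoiding $u$. Then all vertices of $Q$ have the same degree, so $\overline{Q}$ is regular (\zcref{G2-regular}). Hence $Q$ is a connected regular chordal graph, with at least two vertices by \zcref{twoUniversalVertices}, and so complete by \zcref{regular-chordal}. Thus $\overline{Q}$ is edgeless and \zcref{MinTough-Join-Disconnected} yields the list. No bound on separators inside $\overline{C}$ is needed; chordality enters only through regular connected chordal graphs being complete.

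\textbf{Case 2.} Your key step, that every edge $uv$ with $u,v\notin\{a,b\}$ is non-critical, is false. Take $\doublestar{2}{2}$ with centres $c_1,c_2$, leaves $l_1,l_1'$ at $c_1$ and $l_2,l_2'$ at $c_2$, and add the edge $l_1l_2$.
\begin{itemize}
\item The complement is chordal: $c_1,c_2$ are simplicial in it and the rest is $K_4$ minus an edge. It is connected with diameter $3$, and $\{c_1,c_2\}$ is the only pair at co-distance at least $3$.
\item We have $\tau(G)=1/2$, and $G-l_1l_2\cong\doublestar{2}{2}$ has toughness $1/3$, so $l_1l_2$ is critical.
\item The edge that fails is $c_1c_2$ itself: $G-c_1c_2\cong P_6$ still has toughness $1/2$.
\end{itemize}

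The argument must therefore apply criticality to the dominating edge. Choose $u,w$ \emph{simplicial} in $\Gc$ at maximum distance, not arbitrary $a,b$. Then $U=N_{\Gc}(u)$ and $W=N_{\Gc}(w)$ are independent in $G$, and $\kappa_G(u,w)=\abs{X}+m+1$ can be computed via maximum matchings in the bipartite graph $G[U\cup W]$ (\zcref{cochordalkappa}). Compare this with $\lceil 2t\rceil$:
\begin{itemize}
\item For co-diameter at least $4$, the separators $N_G(x)$ or $N_G(u)$ give $2t+1\le\kappa_G(u,w)$, a contradiction.
\item For co-diameter $3$, if $m\ge 1$, a maximum family of $u_0$-$w_0$ paths yields a matching of size at least $m+1$, a contradiction. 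So $m=0$, and then $X=\emptyset$, making $G$ a double star.
\end{itemize}

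Finally, your remark on trees is wrong as stated: every tree on at least three vertices is minimally tough, since deleting any edge disconnects it. What you need is that a tree with a dominating edge is a star or a double star.
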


We obtain a similar classification for net-free co-chordal graphs.
The \emph{net} is the graph obtained from the complete graph $K_3$ by attaching a pendant edge to each vertex.

\begin{theorem}[store=netfreecochordal]
\label{characterization-net-free-co-chordal}
     A net-free co-chordal graph is non-trivially minimally tough if and only if it is isomorphic to one of $P_4$, $K_{2,3}$, $K_{1,\ell}$, $\doublestar{k}{\ell}$, $\turan{2\ell}{\ell}$, or $\turan{2\ell-1}{\ell}$ for some ${k \geq 1}$ and~${\ell \geq 2}$. 
 \end{theorem}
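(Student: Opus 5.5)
We derive \zcref{characterization-net-free-co-chordal} from \zcref{characterization-co-chordal} and \zcref{P4free:mintough} by a case split on the co-diameter. Let $G$ be a net-free co-chordal graph that is non-trivially minimally tough, and put $H=\Gc$, which is chordal and, since $G$ is net-free, contains no induced complement of the net, that is, no induced $3$-sun.

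For the ``if'' direction, note that each listed graph is co-chordal and net-free, since each has at most one vertex of degree $\ge 3$ outside a complete multipartite structure or is a double star. Their minimal toughness is supplied by \zcref{characterization-co-chordal}.

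For the ``only if'' direction we split on the connectivity and diameter of $H$.
\begin{itemize}
\item \textbf{$H$ disconnected.} Then the co-diameter is infinite, so \zcref{characterization-co-chordal} applies directly.
\item \textbf{$\diam(H)\ge 3$.} Again \zcref{characterization-co-chordal} applies directly.
\item \textbf{$\diam(H)=1$.} Then $G$ is edgeless, which is trivially minimally tough and so excluded.
\end{itemize}
The substantive case is therefore a connected chordal $H$ of diameter exactly $2$ with no induced $3$-sun. Here the goal is to show that $G$ is then $P_4$-free, equivalently that $H$ is $P_4$-free. Once this holds, \zcref{P4free:mintough} yields $K_{2,3}$, $K_{1,\ell}$, $\turan{2\ell}{\ell}$ or $\turan{2\ell-1}{\ell}$. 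Alternatively, it suffices to show directly that such a $G$ cannot be minimally tough.

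The key step uses the structure of $3$-sun-free chordal graphs of diameter $2$. In a chordal graph of diameter $2$ there is a universal vertex or a clique dominating in a strong sense: every chordal graph of diameter $\le 2$ has a dominating clique, and for $3$-sun-free (hence essentially strongly chordal-like locally) graphs one expects a vertex adjacent to all others, i.e., a dominating vertex. I would try to prove that a connected $3$-sun-free chordal graph with diameter $2$ has a universal vertex $u$. The argument is to take a maximal clique $C$ dominating $H$, and, if no vertex of $C$ is universal, to pick three private neighbours for pairs of vertices of $C$ to build an induced $3$-sun. This uses chordality to force the needed adjacencies and non-adjacencies, with extra care when $|C|=2$.

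A universal vertex $u$ of $H$ is isolated in $G$. A graph with an isolated vertex and at least one edge has toughness $0$. It is minimally $0$-tough only if every edge deletion decreases toughness, which is impossible because toughness cannot go below $0$, unless one uses the convention that disconnected graphs have toughness $0$. Hence such $G$ is not minimally tough. So $G$ is edgeless, which is trivial, or the case cannot arise.

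I expect the main obstacle to be the structural lemma that a $3$-sun-free chordal graph of diameter $2$ has a universal vertex. If it is false in general, I would instead show that $G$ has an isolated vertex or that some edge of $G$ is non-critical, using the dominating clique together with a toughness-witness argument: any tough set for $G-e$ must separate the endpoints of $e$, and these endpoints have a common non-neighbourhood structure controlled by the clique.
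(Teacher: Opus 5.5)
\textbf{Verdict: there is a genuine gap in the diameter-$2$ case.}

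Your case split matches the paper's. When $H=\Gc$ is disconnected or has diameter at least~$3$, you invoke \zcref{characterization-co-chordal}. When $\diam(H)\le 2$, you want a universal vertex of $H$, i.e.\ an isolated vertex of $G$, which rules out non-trivial minimal toughness.

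The problem is that this last step is not established, and it is the only non-routine ingredient of the whole proof. You present ``a connected $3$-sun-free chordal graph of diameter $2$ has a universal vertex'' as something you ``would try to prove''. The sketch (a dominating clique, ``three private neighbours'', ``extra care when $|C|=2$'') never identifies the six vertices of the $3$-sun or checks the required adjacencies. You also prepare a fallback in case the claim is false. As written, the diameter-$2$ case is therefore left open. Your other suggested goal, that $G$ is $P_4$-free, is false: $H=K_1*P_4$ is chordal, $3$-sun-free and of diameter $2$, yet $G=K_1+P_4$ contains an induced $P_4$.

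\textbf{How the paper closes it.} Since $\diam(H)\le 2$, the closed neighbourhoods of $H$ pairwise intersect. By Groshaus and Szwarcfiter, every graph without induced $C_4$, $C_5$, $C_6$ and co-net is hereditary closed-neighbourhood-Helly. Hence all closed neighbourhoods share a vertex, and that vertex is universal.

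\textbf{A direct alternative.} Your instinct to build a $3$-sun from chordality also works, by induction on $|V(H)|$.
If $H$ is complete, any vertex is universal. Otherwise delete a simplicial vertex $s$. Then $H-s$ is still chordal, $3$-sun-free and of diameter at most $2$: a path $x s y$ can be replaced by the edge $xy$ because $N(s)$ is a clique. By induction, $H-s$ has a universal vertex $u$, and if $us\in E(H)$ you are done.
So assume $us\notin E(H)$ and that $H$ has no universal vertex. Put $K=N(s)$ and $R=V(H)\setminus N[s]$. Choose:
\begin{itemize}
\item $k_1\in K$ with $N(k_1)\cap R$ inclusion-maximal;
\item a non-neighbour $r\in R$ of $k_1$, which exists since $N[s]$ is a clique and $k_1$ is not universal;
\item a neighbour $k_2\in K$ of $r$, which exists since $d_H(r,s)=2$;
\item by maximality of $k_1$, a vertex $r'\in (N(k_1)\cap R)\setminus N(k_2)$.
\end{itemize}
Then $rr'\notin E(H)$, since otherwise $r'k_1k_2r$ would be an induced $C_4$. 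Now $\{k_1,k_2,u,s,r,r'\}$ induces a $3$-sun with triangle $k_1k_2u$: $s$ misses only $u$, $r'$ misses only $k_2$, and $r$ misses only $k_1$. This is a contradiction. Either this lemma or the Helly citation has to be supplied.

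\textbf{Minor points on the ``if'' direction.} Before invoking \zcref{characterization-co-chordal}, check that each listed graph has co-diameter at least $3$. This holds: the complements of $K_{2,3}$, $K_{1,\ell}$ and the Tur\'an graphs are disconnected, and $P_4$ and $\doublestar{k}{\ell}$ have co-diameter $3$. Your net-freeness justification should be replaced by a correct one: the complete multipartite graphs are $P_4$-free while the net contains an induced $P_4$, and $P_4$ and double stars are triangle-free while the net contains a triangle.
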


It is worth noting that the class of net-free co-chordal graphs contains all complements of interval graphs and, more generally, all complements of strongly chordal graphs; see~\cite{BrandstaedtLS:1999:GraphClasses}.

 \begin{theorem}[store=coforests]
     \label{characterization-co-forests}
     A complement of a forest is non-trivially minimally tough if and only if it is isomorphic to one of $P_4$, $\turan{2\ell}{\ell}$, or $\turan{2\ell-1}{\ell}$ for some~${\ell \geq 2}$.
\end{theorem}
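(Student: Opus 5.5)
The plan is to derive \zcref{characterization-co-forests} from \zcref{characterization-co-chordal} and \zcref{P4free:mintough}, since the complement of a forest is co-chordal. Let $G$ be non-trivially minimally tough with $\Gc = F$ a forest. First, $G$ must be connected: a disconnected non-edgeless graph has toughness~$0$, and deleting an edge cannot decrease it further. So $F$ is not a complete bipartite graph plus isolated pieces that would disconnect $G$; in particular, $F$ is not connected as a spanning star. The next step is to split according to whether $F$ is connected.

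\emph{Case 1: $F$ is disconnected.} Then $G$ is the join of the complements of the components of $F$, so every pair of vertices at distance larger than~$1$ in $G$ lies in a common component of $F$. I would first try to show that each component tree $T$ of $F$ must satisfy $\diam(T)\le 1$. The idea is this: if $T$ has a path $abcd$, then the edge $ad$ of $G$ is present, and by minimality some separator $S$ of $G-ad$ witnesses the toughness drop, with $a,d$ in different components of $G-ad-S$. Because all other components of $F$ are joined to everything, $S$ must contain every vertex outside $T$. The remaining graph $\overline{T}-ad-(S\cap V(T))$ then has at least two components. The complement of a tree with diameter at least~$3$ has very few disconnecting sets, essentially only those leaving a co-star. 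A counting argument comparing $|S|/c$ with $\tau(G)$ computed via separators that contain everything outside one component should give a contradiction, except when $F$ is a single component, that is, when $G=P_4$. Once every component of $F$ is $K_1$ or $K_2$, the graph $G$ is complete multipartite with parts of size at most~$2$, hence $P_4$-free. By \zcref{P4free:mintough}, the only admissible graphs are $\turan{2\ell}{\ell}$ and $\turan{2\ell-1}{\ell}$ ($K_{2,3}$ and $K_{1,\ell}$ with $\ell\ge 3$ are excluded because their complements contain triangles).

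\emph{Case 2: $F$ is a tree.} If $\diam(F)\ge 3$, then $G$ is co-chordal with co-diameter at least~$3$, and \zcref{characterization-co-chordal} lists the candidates. Among these, $P_4$ is self-complementary and its complement is a tree. The complement of $\doublestar{k}{\ell}$ is not a forest for $k+\ell\ge 3$, since the leaves on one side form a clique in the complement, and the case $k=\ell=1$ is just $P_4$. The complements of $K_{2,3}$, $K_{1,\ell}$ and the Turán graphs are disconnected, so they do not occur in this case. If $\diam(F)\le 2$, then $F$ is a star, so $G$ is disconnected, which we excluded. The only survivor is therefore $P_4$.

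For the converse, $P_4$ is minimally tough by \zcref{characterization-co-chordal}, and the Turán graphs are minimally tough by \zcref{P4free:mintough}. All three have forest complements.

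The main obstacle is the structural claim in Case~1, namely that a component of diameter at least~$3$ cannot coexist with other components. Here the contribution of the joined vertices must be balanced against $\tau(G)$. I would compute $\tau(G)$ explicitly as a minimum over the components $T$ of $(n-|V(T)|+|S'|)/c$, where $S'$ is a separator inside $\overline{T}$. I would then check that removing a co-edge coming from a long path does not lower this minimum.
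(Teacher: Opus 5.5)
Your Case~2 and the converse are fine apart from one small point noted at the end. Case~1, however, is not a proof.

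\textbf{The gap in Case~1.} The key claim, that no component of $F$ has diameter at least~$3$, is only announced (``a counting argument \dots{} should give a contradiction''), and you yourself call it the main obstacle. Even if it were completed, it would not give what you use next. The $abcd$-argument only concerns components containing a path on four vertices. Components that are stars $K_{1,a}$ with $a\ge 2$ are therefore never excluded, so you cannot conclude that every component is $K_1$ or $K_2$. What the argument would actually give is that $F$, and hence $G$, is $P_4$-free, and at that point \zcref{P4free:mintough} would take over.

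\textbf{Case~1 needs no separate argument.} A disconnected forest has $\diam(F)=\infty\ge 3$. So $G$ is a co-chordal graph with co-diameter at least~$3$, and \zcref{characterization-co-chordal} applies to it exactly as in your Case~2. The disconnected-complement case is precisely \zcref{characterization-co-chordal-disconnected}, where the join condition already does the work you were trying to redo by hand. The paper splits the argument as follows:
\begin{enumerate}
\item $\diam(F)\le 1$: then $G$ is edgeless.
\item $\diam(F)=2$: then $F$ is a star with at least two leaves, and its centre is isolated in $G$.
\item $\diam(F)\ge 3$, finite or not: apply \zcref{characterization-co-chordal} and keep only the graphs whose complement is a forest.
\end{enumerate}
In the last case the filtering goes as follows. $\overline{K_{2,3}}=K_2+K_3$ and, for $\ell\ge 3$, $\overline{K_{1,\ell}}=K_1+K_\ell$ contain triangles. $K_{1,2}\cong\turan{3}{2}$ and $\doublestar{1}{1}\cong P_4$. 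For $k+\ell\ge 3$, the complement of $\doublestar{k}{\ell}$ contains a triangle because all $k+\ell$ leaves are pairwise adjacent in it.

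\textbf{The small point in Case~2.} Your stated reason for excluding double stars (``the leaves on one side form a clique'') does not suffice for $\doublestar{1}{2}$: one side has only two leaves, which give just an edge. You also need the cross-side adjacencies to get a triangle.
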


For a quick reference, we collect in \zcref{table:toughness} the values of the toughness of all the minimally tough graphs appearing in the characterizations from \zcref{P4free:mintough,characterization-co-chordal,characterization-net-free-co-chordal,characterization-co-forests,complete-char-multipar}. 
The entries in the table follow from~\zcref{prop3} and the fact that the toughness of a tree $T$ with maximum degree $\Delta$ is equal to $1/\Delta$ (see, e.g.~\cite{Broersma2015}).

    {\renewcommand{\arraystretch}{1.2}
    \begin{table}[h!]
        \begin{tabular}{|c||c|c|c|c|c|c|}
        \hline
            Graph & 
            $P_4$ & 
            $\doublestar{k}{\ell}$ &
            $K_{2,3}$ & 
            $K_{1,\ell}$ & 
            $\turan{2\ell}{\ell}$ & 
            $T_{2\ell-1,\ell}$ 
            \\ \hline
             Toughness & 
            $\frac{1}{2}$ & 
            $\frac{1}{\ell+1}$ &
            $\frac{2}{3}$ & 
            $\frac{1}{\ell}$ & 
            $\ell-1$ & 
            $\ell-\frac{3}{2}$ \\
            \hline
        \end{tabular}
        \caption{Toughness of the various minimally tough graphs we are considering, where $\ell \geq 2$ and~${1 \leq k \leq \ell}$.}
        \label{table:toughness}
    \end{table}}

It is easy to verify that all the graphs that appear in any of our characterizations contain a vertex of degree $\lceil 2t \rceil$, where~$t$ is the respective toughness of the graph. 
As a consequence, the generalized Kriesell's conjecture holds for the classes of $P_4$-free graphs, co-chordal graphs with co-diameter at least $3$, net-free co-chordal graphs, and complements of forests. 

\bigskip
\paragraph{Our approach}
In all of the above results, our approach is to first consider the case when the complement of the graph is disconnected, that is, when the graph is the join of two smaller graphs.
In this regard, we develop the following necessary condition for the join of two graphs to be minimally tough.

\begin{condition}[The join condition]
    \label{joincondition}

If one of the two graphs contains a vertex of maximum degree in the join, then the other graph must be regular.
\end{condition}
This condition is instrumental in the proof of our classification of minimally tough $P_4$-free graphs.
Furthermore, it leads to simple proofs of two known results on minimally tough graphs due to Dallard et al.~\cite{Dall23}:
\begin{itemize}
    \item The fact that for $t > 1$, no chordal graph with a universal vertex is minimally $t$-tough. 
\item A classification of minimally $t$-tough graphs with a universal vertex for all $t\le 1$.
In fact, we extend this classification to all $t\le 3/2$.
\end{itemize}
Our approach for both $P_4$-free graphs and complements of connected chordal graphs with diameter at least $3$ is also based on the existence of a \emph{dominating edge} in the graph, that is, an edge such that every vertex of the graph is adjacent to at least one of the endpoints of the edge.
Applying a lemma from Dallard et al.~\cite{Dall23}, we characterize the cases when deleting some dominating edge does not change the toughness of the graph, implying that the graph is not minimally tough.
As another step towards the classification of minimally tough $P_4$-free graphs, we determine the toughness of complete multipartite graphs (see \zcref{prop3}), generalizing the analogous result of Chv\'{a}tal~\cite{Chva73} for complete bipartite graphs.

Lastly, the classification of minimally tough graphs within the class of net-free co-chordal graphs is obtained by reducing the problem to complements of chordal graphs with diameter at least $3$, using a characterization of hereditary closed neighbourhood-Helly graphs due to Groshaus and Szwarcfiter~\cite{MR2383735}.

\bigskip
\paragraph{Structure of the paper}
In \zcref{sec:prelim} we summarize some of the standard graph theory terminology and notation, as well as some folklore results on connectivity and chordal graphs.
In \zcref{sec:bip-matchings,sec:prelim-tough}, we develop some basic tools that we apply in later sections.
More specifically, in \zcref{sec:bip-matchings}, we make use of matchings in bipartite graphs to determine the local connectivity in certain cases.
In \zcref{sec:prelim-tough} we present some preliminary results on toughness and minimal toughness of graphs.
In particular, focusing on graphs with dominating edges, we determine the toughness of complete multipartite graphs.
In \zcref{sec:joins} we explore minimal toughness of joins of graphs and apply our findings to graphs with a universal vertex.
In \zcref{sec:P4free}, we characterize minimally tough $P_4$-free graphs.
In \zcref{sec:cochordal}, we obtain our most involved result, a characterization of minimally tough co-chordal graphs with co-diameter at least $3$.
As a consequence, we also classify minimally tough net-free co-chordal graphs and minimally tough complements of forests.
We conclude the paper with some open questions in \zcref{sec:conclusion}.

\section{Preliminaries}
\label{sec:prelim}

For a positive integer $n$, we denote by $[n]$ the set of first $n$ positive integers.
For any graph notions and notation not explained here, we refer to~\cite{Dies05}.
All the graphs considered in this paper are finite, simple, and undirected. 
The graph of order $0$ is called the \emph{null graph}, and a graph of order at most $1$ is called \emph{trivial}.
For a vertex $v$ in a graph $G$, the degree, the open neighbourhood, and the closed neighbourhood of $v$ in $G$ are denoted by $d_G(v)$, $N_G(v)$, and $N_G[v]$, respectively, with the subscript $G$ omitted when the graph is clear from the context. 
The minimum and maximum degree of a graph $G$ are denoted by $\delta(G)$ and $\Delta(G)$, respectively.

For a graph $G$ and an arbitrary set $U \subseteq V(G)$, we denote by $G[U]$ the induced subgraph of $G$ with vertex set $U$.
For any set $U \subseteq V(G)$, we denote by $G-U$ the graph obtained from $G$ by deleting all the vertices from $U$ and all the edges incident with at least one vertex from $U$, that is, $G-U=G[V(G)\setminus U]$. 
If $U=\{u\}$, we write $G-u$ rather than $G-\{u\}$.
For an edge $e=uv \in E(G)$, we denote by $G-e$ or $G-uv$ the graph obtained from $G$ by removing the edge $e$.
For disjoint subsets $U,W \subseteq V(G)$, an edge $uw$ with $u \in U$ and $w \in W$ is said to be a \emph{$U$-$W$ edge}; furthermore, we let $G[U,W]$ denote the graph whose vertex set is $U \cup W$ and whose edge set consists of all $U$-$W$ edges.

The \emph{complement} $\overline{G}$ of a graph $G$ is the graph with vertex set $V(G)$ defined by $uv \in E(\Gc)$ if and only if $uv \notin E(G)$.
Let $G_1=(V_1,E_1)$ and $G_2=(V_2,E_2)$ be two vertex-disjoint graphs.
The \emph{disjoint union} of $G_1$ and $G_2$, is the graph $G_1+G_2$ whose vertex set is $V_1 \cup V_2$ and the edge set is $E_1 \cup E_2$. 
Furthermore, for $k\geq 1$ and a graph $G$, we denote by $kG$ the graph consisting of $k$ pairwise disjoint copies of $G$.
 The \emph{join} of $G_1$ and $G_2$, denoted by $G_1 * G_2$, is the graph whose vertex set is $V_1 \cup V_2$ and  the edge set is the union of $E_1$, $E_2$ and all the edges joining $V_1$ and $V_2$.

For a positive integer $n$, we denote by $P_n$, $C_n$, and $K_n$, respectively, the $n$-vertex path graph, the $n$-vertex cycle graph (for $n\ge 3$), and the $n$-vertex complete graph.
For a graph $G$ and two vertices $u,v \in V(G)$, a \emph{$u$-$v$ path in $G$} is a subgraph in $G$ that is a path from $u$ to $v$. 
Two $u$-$v$ paths are \emph{internally vertex-disjoint} if $u$ and $v$ are their only common vertices.
For two sets of vertices $U,W \subseteq V(G)$, we call $P=(v_1,\ldots, v_k)$ a $U$-$W$ path if $V(P) \cap U=\{v_1\}$ and $V(P) \cap W =\{v_k\}$.

The \emph{distance} between two vertices $u$ and $v$ in a graph $G$, denoted by $d_G(u,v)$, is the  length of a shortest $u$-$v$ path in $G$. If no such path exists, we set $d_G(u,v):=\infty$.
The \emph{diameter} of $G$, denoted by $\diam(G)$, is the largest distance between any two vertices in $G$, i.e., $\diam(G)=\max_{u,v \in V(G)}d_G(u,v)$.
The \emph{eccentricity} of a vertex $u \in V(G)$, denoted by $\epsilon(u)$, is $\max_{v \in V(G)}d_G(u,v)$. Hence, $\diam(G) = \max_{u \in V(G)} \epsilon(u)$.

We denote by $c(G)$ the number of components of a graph $G$. 
A subset $S \subseteq V(G)$ is a \emph{separator} of $G$ if the graph $G-S$ has more than one component, i.e., if $c(G-S)\geq 2$. In this case we say that $S$ \emph{separates} $G$. 
For two vertices $u,v \in V(G)$, a set $S \subseteq V(G)\setminus \{u,v\}$ is a \emph{$u$-$v$ separator} if $G-S$ has no $u$-$v$ path. In this case we say that $S$ separates $u$ and $v$ in $G$.
For an integer $k\geq 0$, a graph $G$ is said to be \emph{$k$-connected} if ${\abs{V(G)} \geq k+1}$ and $G-S$ is connected for every set ${S \subseteq V(G)}$ with ${\abs{S} \leq k-1}$. 
In other words, no two vertices of $G$ are separated by fewer than $k$ other vertices.
The largest integer $k$ such that $G$ is $k$-connected is the \emph{connectivity} of $G$, denoted by $\kappa(G)$.
For arbitrary distinct vertices $u, v \in V(G)$, we denote by $\kappa_G(u,v)$ the corresponding \emph{local connectivity}, that is, the maximum number of pairwise internally vertex-disjoint $u$-$v$ paths in $G$. 
If $u$ and $v$ are adjacent, we also count the path $(u,v)$.

Next we recall two related inequalities.
The first one follows from the ``Global version of Menger's Theorem''~{\cite[Theorem~3.3.6]{Dies05}} (see also~{\cite[Theorem~4.2.21]{West00}}), while the second one follows directly from the definitions.

\begin{lemma}\label{conn-ivdp} 
Let $u$ and $v$ be two distinct vertices in a graph $G$.
Then 
\[\kappa(G) \leq \kappa_G(u,v) \leq \min\{d(u),d(v)\}\,.\]
\end{lemma}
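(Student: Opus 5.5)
We prove the two inequalities separately; the first is Menger's theorem and the second is a counting argument.

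\emph{Upper bound.} Let $P_1,\dots,P_k$ be pairwise internally vertex-disjoint $u$-$v$ paths, with $k=\kappa_G(u,v)$. Each $P_i$ leaves $u$ along an edge $ux_i$, where $x_i$ is the second vertex of $P_i$; if $P_i=(u,v)$, then $x_i=v$. We claim the vertices $x_1,\dots,x_k$ are distinct. If $x_i\neq v$, then $x_i$ is an internal vertex of $P_i$, so it lies on no other path. Moreover, at most one path equals the single edge $(u,v)$, since the paths are distinct subgraphs. Hence $k\le d(u)$, and symmetrically $k\le d(v)$.

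\emph{Lower bound.} We use the global version of Menger's Theorem~\cite[Theorem~3.3.6]{Dies05}: $G$ is $k$-connected if and only if any two distinct vertices are joined by $k$ internally vertex-disjoint paths. Set $k=\kappa(G)$. The case $k=0$ is trivial, so assume $k\ge 1$. Then $G$ is $k$-connected, and the theorem gives $\kappa_G(u,v)\ge k$.

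The only point needing care is adjacent $u,v$, since the edge $uv$ itself counts as a path. Diestel's statement already counts the edge $uv$ in this case. If one prefers to derive it from the local Menger theorem, which applies only to non-adjacent pairs, we proceed as follows. Consider $G'=G-uv$ and suppose $G'$ has a $u$-$v$ separator $S$ with $|S|\le k-2$. Since $|V(G)|\ge k+1$, some vertex $w\notin S\cup\{u,v\}$ exists. Say $w$ lies in a component of $G'-S$ not containing $u$. Then $S\cup\{u\}$, of size at most $k-1$, separates $w$ from $v$ in $G$, contradicting $k$-connectivity. The case where $w$ lies in a component not containing $v$ is symmetric, using $S\cup\{v\}$.

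Therefore every $u$-$v$ separator in $G'$ has at least $k-1$ vertices, and the local Menger theorem gives $k-1$ internally disjoint $u$-$v$ paths in $G'$. Adding the path $(u,v)$ yields $k$ such paths in $G$.

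This adjacent-vertex case is the only step that is not immediate from the definitions.
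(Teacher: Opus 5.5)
Your main argument is correct and is the paper's own: the paper says the upper bound ``follows directly from the definitions'', which is your count of distinct second vertices, and it gets the lower bound by citing the global version of Menger's Theorem \cite[Theorem~3.3.6]{Dies05}. That theorem already counts the edge $uv$ when $u$ and $v$ are adjacent, so your proof is complete without the supplementary paragraph.

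That supplementary paragraph, which derives the adjacent case from the local Menger theorem, pairs each of its two cases with the wrong separator. Suppose the component of $w$ in $G'-S$ does not contain $u$. That component may well contain $v$, and then $S\cup\{u\}$ does not separate $w$ from $v$ in $G$. The correct pairing is as follows.
\begin{itemize}
\item If the component of $w$ avoids $v$, then $S\cup\{u\}$ separates $w$ from $v$ in $G$. This holds because deleting $u$ also removes the edge $uv$, so $G-(S\cup\{u\})=G'-(S\cup\{u\})$, which is a subgraph of $G'-S$.
\item If the component of $w$ avoids $u$, then $S\cup\{v\}$ separates $w$ from $u$ in $G$, by the same reasoning.
\end{itemize}
Since $u$ and $v$ lie in different components of $G'-S$, one of these cases always occurs. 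Each separator has at most $k-1$ vertices, which contradicts $k$-connectivity. After this swap, your derivation is exactly Diestel's proof and goes through.
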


The graph isomorphism relation is denoted by $\cong$.
A \emph{graph class} is a collection of graphs that is closed under isomorphism. 
A graph class is \emph{hereditary} if it is closed under induced subgraphs. 

A graph $G$ is \emph{$P_4$-free} if it has no induced subgraph isomorphic to $P_4$.
A \emph{star} is a graph isomorphic to a complete bipartite graph $K_{1,a}$; the vertex of degree $a$ is the \emph{center} of the star. 
(In the case $a = 1$, both vertices of the star $K_{1,a}$ have degree~$a$; in this case, we choose one of them arbitrarily as the center.)
A \emph{double star} is a graph, denoted by $\doublestar{a}{b}$, obtained from the disjoint union of two stars~$K_{1,a}$ and~$K_{1,b}$ by adding an edge between their centers. 
Similarly, we denote by~$\triplestar{a}{b}{c}$ the graph obtained from the disjoint union of three stars~$K_{1,a}$, $K_{1,b}$, and~$K_{1,c}$ by adding a triangle between their centers.

A graph $G$ is \emph{chordal} if it has no induced cycle of length at least $4$. 
A graph $G$ is \emph{co-chordal} if its complement $\overline{G}$ is chordal.
A vertex of a graph $G$ is \emph{simplicial} if its neighbourhood in $G$ is a clique. 
Every non-trivial chordal graph has two simplicial vertices at maximum distance, which easily follows from a result of Voloshin~\cite{Volo82} and Farber and Jamison~\cite{Farb86} (see also~{\cite[Lemma~5.3.16]{West00}}). 

\begin{lemma}\label{SimplVert-MaxDistance}
Let $G$ be a chordal graph with $\diam(G)=d$ for some positive integer $d$. 
Then there exist simplicial vertices $u$ and $v$ in $G$ at distance $d$. 
\end{lemma}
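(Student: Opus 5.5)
The statement is Lemma~\ref{SimplVert-MaxDistance}. I will derive it from the classical fact of Voloshin and Farber--Jamison: in a chordal graph, for every vertex $x$, some vertex $u$ with $d(x,u)=\epsilon(x)$ is simplicial. Equivalently, every vertex farthest from $x$ lies in a ``last'' layer that contains a simplicial vertex. For a self-contained argument, one can take a maximum cardinality search or LexBFS started at $x$. It yields a perfect elimination ordering whose last-eliminated vertices are visited in BFS order, so the first vertex of the elimination ordering is simplicial and lies at maximum distance from $x$.

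The plan is to apply this fact twice. First, pick vertices $a,b$ with $d(a,b)=d$, so that $\epsilon(a)=d$ because $d$ is the diameter. Applying the fact to $x=a$ gives a simplicial vertex $u$ with $d(a,u)=\epsilon(a)=d$. Since $d(a,u)=d$ and no distance exceeds $d$, we get $\epsilon(u)=d$. Applying the fact again to $x=u$ gives a simplicial vertex $v$ with $d(u,v)=\epsilon(u)=d$. Then $u$ and $v$ are simplicial vertices at distance $d$, as required.

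If $G$ is disconnected, then $d=\infty$ under the paper's convention, and the claim reduces to picking a simplicial vertex in each of two components. This works because every nonempty chordal graph has a simplicial vertex (Dirac). Since $d$ is assumed to be a positive integer, we may mostly assume $G$ is connected and non-trivial.

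The only real content is the eccentricity version of the Farber--Jamison fact. If I had to prove it rather than cite it, I would induct using Dirac's lemma. Let $L$ be the set of vertices at distance $\epsilon(x)$ from $x$, and let $C$ be a component of $G[L]$. Its neighbourhood $S$ in the previous layer separates $C$ from $x$, and a minimal such separator is a clique in a chordal graph. Then $G[C\cup S]$ is chordal, so either it is complete, or it has two nonadjacent simplicial vertices, at least one of which is outside the clique $S$. In both cases we obtain a vertex of $C$ that is simplicial in $G[C\cup S]$, and hence in $G$, because its neighbourhood lies in $C\cup S$. I expect the main obstacle to be the care needed to ensure $S$ is a clique: one must take a minimal separator between $C$ and $x$ that lies inside the layer at distance $\epsilon(x)-1$. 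Since the paper cites this as known, I would simply invoke it.
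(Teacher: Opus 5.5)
Your argument is correct and is the derivation the paper has in mind. The paper gives no separate proof; it only notes that the lemma easily follows from the Voloshin/Farber--Jamison result that, for every vertex $x$, some vertex at distance $\epsilon(x)$ from $x$ is simplicial. Applying that result twice, first from an endpoint $a$ of a diametral pair and then from the resulting simplicial vertex $u$ (which has $\epsilon(u)=d$), is exactly that step.

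One small correction to your self-contained aside: maximum cardinality search does not visit vertices in BFS order. On the path with edges $cx$, $xa$, $ab$, a search from $x$ may visit $x,a,b,c$ and end at $c$, even though $b$ is the farthest vertex from $x$. So only LexBFS, or your Dirac-based argument on the last layer, yields a simplicial vertex at distance $\epsilon(x)$.
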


The existence of simplicial vertices in chordal graphs heavily restricts the structure of regular chordal graphs.

\begin{lemma}\label{regular-chordal}
Let $G$ be a connected chordal graph. If $G$ is $d$-regular for some $d \geq 1$, then $G$ is isomorphic to~$K_{d+1}$.
\end{lemma}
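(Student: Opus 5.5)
The approach is to show that a simplicial vertex of $G$ must be adjacent to everything, which forces $G$ to be complete. Let $G$ be a connected chordal $d$-regular graph with $d \geq 1$. Since $d \geq 1$ and $G$ is connected, $G$ has at least two vertices, so it is non-trivial.

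We first note that $G$ has a simplicial vertex. If $\diam(G) \geq 1$, this follows from \zcref{SimplVert-MaxDistance}. Alternatively, one can use the standard fact that every chordal graph has a simplicial vertex.

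Let $v$ be simplicial. Then $N[v]$ is a clique on $d+1$ vertices. Each vertex $u \in N[v]$ is adjacent to the other $d$ vertices of this clique. Since $d(u) = d$, these are all of the neighbours of $u$, so $N[u] = N[v]$.

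It follows that no edge leaves $N[v]$. Because $G$ is connected, $V(G) = N[v]$, and hence $G \cong K_{d+1}$.

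The only delicate point is ensuring that a simplicial vertex exists, and this is supplied by \zcref{SimplVert-MaxDistance} since $G$ is non-trivial and connected with $\diam(G) \geq 1$.
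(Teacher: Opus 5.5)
Your proof is correct and follows essentially the same route as the paper: take a simplicial vertex $v$, note that $N[v]$ is a $(d+1)$-clique whose vertices already have all $d$ of their neighbours inside it, and use connectivity to conclude $V(G)=N[v]$. The paper phrases this last step as a contradiction with an edge leaving $N[v]$ along a path, and it takes the existence of a simplicial vertex for granted, whereas you justify it explicitly via the lemma on simplicial vertices at maximum distance.
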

\begin{proof}
Assume $G$ is $d$-regular.
Let $v \in V(G)$ be a simplicial vertex of $G$. Then $N[v] \cong K_{d+1}$. Assume there is a vertex $u \in V(G)\setminus N[v]$. Since $G$ is connected, there exists a $v$-$u$ path $P$ in $G$. Then there  exist a vertex $v' \in N(v)$ and a vertex $u' \notin N[v]$, such that $v'u' \in E(P)$. But then $v'$ is adjacent to $u'$ and all the vertices in $N[v]\setminus \{v'\}$. Hence, $d(v') \geq d+1$, a contradiction. So $V(G)=N[v]$ and hence $G \cong K_{d+1}$.
\end{proof}

\section{Bipartite matchings and local connectivity}
\label{sec:bip-matchings}

We recall some definitions and observations related to matchings in graphs (specifically, in bipartite graphs) and apply them to characterize the structure of certificates for local connectivity of certain vertex pairs in bipartite graphs (\zcref{matching-ivdp}), and derive a formula for the local connectivity of two vertices from different components of the complement of a graph (\zcref{lem:kuv}).
The results of this section will be used in various proofs throughout the paper. 

A \emph{matching} in a graph $G$ is a set of edges with no shared endpoints. 
A \emph{maximum matching} is a matching of maximum size among all matchings in the graph. 

We will consider matchings in a bipartite graph $G$ in relation to collections of internally vertex-disjoint paths in a graph obtained from $G$, as defined below. 
\begin{definition}\label{extension-of-G}
Let $G$ be a bipartite graph with parts $A$ and $B$. 
Let $u$ and $v$ be two new vertices, that is, $\{u,v\}\cap V(G)=\emptyset$. 
The \emph{$(u,v)$-extension} of $(G,A,B)$ is the graph $\widehat{G}$  with $V(\widehat{G})=V(G) \cup \{u,v\}$  
and $E(\widehat{G})=E(G) \cup \{ua \colon a \in A\} \cup \{bv \colon b \in B\}$.
For every matching $M$ in $G$ we denote by $\cP(M)$ the set $\{(u,x,y,v)\colon xy \in M\}$. Notice that $\cP(M)$ is a collection of pairwise internally vertex-disjoint $u$-$v$ paths in $\widehat{G}$.
\end{definition}
The following lemma shows that any maximum collection of pairwise internally vertex-disjoint $u$-$v$ paths in $\widehat{G}$ corresponds to a maximum matching in $G$. 

\begin{lemma}\label{matching-ivdp} 
Let $G$ be a bipartite graph with parts $A$ and $B$. 
Let $\widehat{G}$ be the \hbox{$(u,v)$-extension} of $(G,A,B)$.
Then for every maximum collection of pairwise internally vertex-disjoint $u$-$v$ paths in $\widehat{G}$, there exists a maximum matching $M$ in $G$ such that $\cP(M)=\cP$.
\end{lemma}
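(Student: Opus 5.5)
The plan is to show two things: every $u$-$v$ path in a maximum collection has the form $(u,a,b,v)$ with $a \in A$ and $b \in B$, and the resulting edges $ab$ form a maximum matching of $G$.

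\emph{Step 1: the size of a maximum collection equals the matching number $\nu(G)$.} Let $\mathcal{P}$ be a maximum collection of pairwise internally vertex-disjoint $u$-$v$ paths in $\widehat{G}$. Since $u$ and $v$ are not adjacent, each path in $\mathcal{P}$ has interior vertices. For a maximum matching $M$ of $G$, the family $\cP(M)$ is such a collection, so $\abs{\mathcal{P}} \geq \nu(G)$. For the reverse inequality, apply K\H{o}nig's theorem to get a vertex cover $C$ of $G$ with $\abs{C} = \nu(G)$. Every $u$-$v$ path in $\widehat{G}$ leaves $u$ into $A$ and enters $v$ from $B$. So somewhere it uses an $A$-$B$ edge of $G$, which means it meets $C$. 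Hence $C$ is a $u$-$v$ separator, and since the paths are internally disjoint, $\abs{\mathcal{P}} \leq \abs{C} = \nu(G)$. Therefore $\abs{\mathcal{P}} = \nu(G)$.

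\emph{Step 2: every path in $\mathcal{P}$ has length exactly $3$.} This is the main obstacle, because a path could in principle zigzag, for example $(u,a,b,a',b',v)$. Such a path uses only the vertices $a$ and $b'$ as neighbours of $u$ and $v$. I would handle it with a tight counting argument against the cover $C$.

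Each path $P \in \mathcal{P}$ contains at least one vertex of $C$, and the paths are internally disjoint. Since $\abs{\mathcal{P}} = \abs{C}$, each path contains \emph{exactly} one vertex of $C$. Now take $P = (u, x_1, \ldots, x_k, v)$, where $x_1 \in A$, $x_k \in B$, and each edge $x_ix_{i+1}$ lies in $G$. Every edge $x_ix_{i+1}$ is covered by $C$. If $k \geq 3$, the two edges $x_1x_2$ and $x_{k-1}x_k$ are vertex-disjoint when $k \geq 4$, and when $k=3$ they share only $x_2$. In addition, $x_2x_3$ must be covered.

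Since $P$ contains exactly one vertex of $C$, all edges of $P$ inside $G$ must be covered by that single vertex $c$. The edges $x_1x_2, \ldots, x_{k-1}x_k$ are consecutive edges of a path, and a single vertex covers at most two of them. So $k-1 \leq 2$, that is, $k \leq 3$. Moreover, $k$ is even, because the vertices $x_1, \ldots, x_k$ alternate between $A$ and $B$ with $x_1 \in A$ and $x_k \in B$ (the only edges of $\widehat{G}$ inside $V(G)$ are $A$-$B$ edges). Together these give $k = 2$, so $P = (u,x_1,x_2,v)$. (One should also note that a path cannot revisit $u$ or $v$, so the structure described is forced.)

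\emph{Step 3: conclusion.} Set $M = \{x_1x_2 : (u,x_1,x_2,v) \in \mathcal{P}\}$. Internal disjointness of the paths makes $M$ a matching. Its size is $\abs{\mathcal{P}} = \nu(G)$, so $M$ is a maximum matching, and by construction $\cP(M) = \mathcal{P}$.
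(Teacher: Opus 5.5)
Your proof is correct, but it takes a different route from the paper's.

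\textbf{The paper's route.} The paper never computes $\abs{\cP}$ in advance. It argues directly that any path $(u,a_1,b_1,\ldots,a_k,b_k,v)$ with $k\ge 2$ can be replaced by the two internally disjoint paths $(u,a_1,b_1,v)$ and $(u,a_k,b_k,v)$. This contradicts maximality, so every path has length exactly $3$. The edges $a_ib_i$ then form a matching $M$. It is maximum because otherwise $\cP(M')$, for a maximum matching $M'$, would be a larger collection.

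\textbf{Your route.} You first fix $\abs{\cP}=\nu(G)$ using K\H{o}nig's theorem. You then use tightness against a minimum vertex cover $C$ to force length $3$: each path meets $C$ exactly once, and one vertex covers at most two consecutive path edges. This is essentially a complementary-slackness argument.

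\textbf{Comparison.} The paper's exchange argument is more elementary and self-contained. It needs no K\H{o}nig, and its final maximality step is the same comparison as your lower bound in Step~1. Yours gives the min--max identity $\abs{\cP}=\nu(G)$ and a certificate (the cover $C$) as a by-product. With the splitting observation, however, your Step~2 becomes unnecessary.

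\textbf{Minor point.} In Step~2, the sentence about $x_1x_2$ and $x_{k-1}x_k$ being vertex-disjoint when $k\ge 4$ plays no role in the argument and can be deleted.
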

\begin{proof}
Let $\cP$ be a maximum collection of pairwise internally vertex-disjoint $u$-$v$ paths in $\widehat{G}$.
Let~${p = \abs{\cP}}$ and let~$m$ be the maximum size of a matching in~$G$. 

First, observe that every path in $\cP$ is of the form $(u,a,b,v)$, where $a \in A$ and $b \in B$. Indeed, since the distance between $u$ and $v$ in $\widehat{G}$ is at least $3$, every  path in $\cP$ has length at least $3$, and any path of length greater than $3$ would be of the form $(u,a_1,b_1,\ldots, a_k,b_k,v)$ for some vertices $a_1,\ldots,a_k \in A$, $b_1,\ldots,b_k \in B$, where $k\geq 2$. Such a path could be replaced by two paths, namely $(u,a_1,b_1,v)$ and $(u,a_k,b_k,v)$, contradicting the maximality of $\cP$.

Second, since the paths in $\cP$ are pairwise internally vertex-disjoint, $\cP$ is of the form $\{(u,a_i,b_i,v)\colon 1 \leq i \leq p\}$ where $a_1,\ldots, a_p \in A$ and $b_1,\ldots, b_p \in B$ are pairwise distinct vertices. Then $M=\{a_ib_i\colon 1 \leq i \leq p\}$ is a matching in $G$ such that $\cP=\cP(M)$.

It remains to show that $M$ is a maximum matching. 
Suppose for a contradiction that this is not the case. 
Then~${\abs{M} = p < m}$. 
Let~$M'$ be a maximum matching in~$G$. Then~${\cP(M')}$ is a collection of pairwise internally vertex-disjoint $u$-$v$ paths in~$\widehat{G}$ with ${\cP(M') = m > p}$, contradicting the maximality of~$\cP$. 
\end{proof}

We now analyze the local connectivity between vertices in joins of graph. 

\begin{lemma}\label{lem:kuv} 
Let $G_1$ and~$G_2$ be two vertex-disjoint graphs and let~${G = G_1 * G_2}$. 
Then for every pair of vertices~${u \in V(G_1)}$ and~${v \in V(G_2)}$, we have
\[
    \kappa_G(u,v) = \min\{d_G(u),d_G(v)\}.
\]
\end{lemma}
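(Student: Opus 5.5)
The upper bound $\kappa_G(u,v) \le \min\{d_G(u),d_G(v)\}$ is immediate from \zcref{conn-ivdp}, so the plan is to construct $k := \min\{d_G(u),d_G(v)\}$ pairwise internally vertex-disjoint $u$-$v$ paths. By symmetry, assume $d_G(u) \le d_G(v)$.

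Write $n_i = \abs{V(G_i)}$. Since $u \in V(G_1)$ is adjacent to all of $V(G_2)$ and $v \in V(G_2)$ to all of $V(G_1)$, we have
\[
N(u) = N_{G_1}(u) \cup V(G_2), \qquad N(v) = N_{G_2}(v) \cup V(G_1).
\]
The edge $uv$ gives one path. Put $A = N(u)\setminus\{v\}$ and $B = N(v)\setminus\{u\}$, so $\abs{A} = d_G(u)-1$ and $\abs{B} = d_G(v)-1$.

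The sets $A$ and $B$ may overlap. Their intersection is
\[
C := A \cap B = N_{G_1}(u) \cup N_{G_2}(v),
\]
since every vertex of $N_{G_1}(u)$ lies in $V(G_1) \subseteq N(v)$, and symmetrically every vertex of $N_{G_2}(v)$ lies in $N(u)$. Each $c \in C$ gives a path $(u,c,v)$ of length two. The remaining vertices are
\[
A' = A\setminus C = V(G_2)\setminus N_{G_2}[v], \qquad B' = B\setminus C = V(G_1)\setminus N_{G_1}[u].
\]
Every vertex of $A'$ lies in $G_2$ and every vertex of $B'$ lies in $G_1$, so every $A'$-$B'$ pair is adjacent in the join. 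Hence $G[A',B']$ is complete bipartite and has a matching of size $\min\{\abs{A'},\abs{B'}\}$. Each matched pair $ab$ gives a path $(u,a,b,v)$. These are exactly the $\cP(M)$ paths of \zcref{extension-of-G}, although completeness makes \zcref{matching-ivdp} unnecessary here.

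All of these paths are internally disjoint: the path $(u,v)$ has no interior, the sets $C$, $A'$, and $B'$ are pairwise disjoint, and the matching uses distinct vertices. The total count is
\[
1 + \abs{C} + \min\{\abs{A'},\abs{B'}\} = 1 + \min\{\abs{A},\abs{B}\} = \min\{d_G(u),d_G(v)\},
\]
using $\abs{A} = \abs{C} + \abs{A'}$ and $\abs{B} = \abs{C} + \abs{B'}$.

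The one step that needs care is the bookkeeping of the overlap $C$. A naive matching between $A$ and $B$ would double-use its vertices, so they must be routed through length-two paths instead. Once $C$ is identified, the rest is routine.
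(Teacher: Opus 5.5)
Your proof is correct and follows the paper's argument step for step. Both use the edge $uv$, the length-two paths through $N_{G_1}(u)\cup N_{G_2}(v)$, and length-three paths from a maximum matching in the complete bipartite graph between $V(G_2)\setminus N_{G_2}[v]$ and $V(G_1)\setminus N_{G_1}[u]$. The differences are cosmetic: you exhibit the matching directly instead of invoking the matching lemma (rightly noting it is unnecessary for the lower bound), you count via $|A|$ and $|B|$ rather than via $|V(G_i)|$ and $d_{G_i}$, and your symmetry assumption $d_G(u)\le d_G(v)$ is never used but is harmless.
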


\begin{proof}
    For~${i \in [2]}$, let~${V_i \coloneqq V(G_i)}$, 
    let~${n_i \coloneqq \abs{V_i}}$, and let~$d_i$ denote the degree-function of~$G[V_i]$, that is,~$d_{G[V_i]}$.
    
    Let~${u \in V_1}$ and~${v \in V_2}$ be given. 
    By \zcref{conn-ivdp} we have $\kappa_G(u,v) \leq \min\{d_G(u),d_G(v)\}$. 
    We construct a set $\cP$ of internally vertex-disjoint $u$-$v$ paths in $G$ of size ${\min\{d_G(u),d_G(v)\}}$, as follows. 
    
    Let~${\cP_1=\{(u,v)\}}$, let~${\cP_2=\{(u,x,v) \mid x \in N_G(u) \cap N_G(v)\}}$, 
    and let $\cP_3$ be a maximum set of pairwise internally vertex-disjoint $u$-$v$ paths of length~$3$ with whose internal vertices are in~${V(G) \setminus (N_G(u) \cap N_G(v))}$. 
    Clearly, ${\abs{\cP_1} = 1}$ and 
    \[
        \abs{\cP_2} 
        = \abs{N_G(u) \cap N_G(v)} 
        = d_1(u)+d_2(v)\,.
    \]
    To determine the cardinality of~$\cP_3$, 
    let ${A \coloneqq V_2\setminus N_{G_2}[v]}$ and ${B \coloneqq V_1\setminus N_{G_1}[u]}$, and
    notice that every path in~$\cP_3$ is of the form ${(u,a,b,v)}$, where~${a \in A}$ and~${b \in B}$. 
    So $\cP_3$ is a maximum set of pairwise internally vertex-disjoint {$u$-$v$ paths} in the graph ${\widehat{G} \coloneqq G[V_1\setminus N_{G_1}(u),V_2\setminus N_{G_2}(v)]-uv}$.
    Furthermore, since in the graph $\widehat{G}$, vertices $u$ and $v$ are non-adjacent to each other, $u$ is adjacent to every vertex in $A$, and $v$ is adjacent to every vertex in $B$, the graph $\widehat{G}$ is the $(u,v)$-extension of $(G[A,B], A, B)$.
    Hence, by \zcref{matching-ivdp}, $\cP_3=\cP(M)$ for some maximum matching $M$ in  $G[A,B]$. 
    Since $G[A,B]$ is a complete bipartite graph, 
    \[
        \abs{M} 
        = \min\{\abs{A},\abs{B}\} 
        = \min\{n_2-d_2(v)-1,n_1-d_1(u)-1\}
    \] 
    and, consequently, ${\abs{\cP_3} = \min\{n_1-d_1(u),n_2-d_2(v)\} - 1}$. 
 
    Let~${\cP \coloneqq \cP_1 \cup \cP_2 \cup \cP_3}$. 
    Since~$\cP_1$, $\cP_2$, and~$\cP_3$ are pairwise disjoint, we have
    \begin{align*}
        \abs{\cP}
        &= 1+d_1(u)+d_2(v) + \min\{n_1-d_1(u),n_2-d_2(v)\}-1\\
        &= \min\{n_1+d_2(v), n_2+d_1(u)\}\\
        &= \min\{d_G(v),d_G(u)\}.
    \end{align*}
    Since~$\cP$ is a collection of internally vertex-disjoint $u$-$v$ paths, we get
    \[
        {\kappa_G(u,v)\geq \abs{\cP} = \min\{d_G(v),d_G(u)\}}\,,
    \] 
    which completes the proof.
\end{proof}

\section{Preliminary results on toughness and minimal toughness}
\label{sec:prelim-tough}

Let~$G$ be a graph, and ${t \in \R}$. 
Recall that~$G$ is said to be \emph{$t$-tough} if for every ${S \subseteq V(G)}$ that separates~$G$, we have ${t \leq \frac{\abs{S}}{c(G-S)}}$. 
The \emph{toughness} of a non-complete graph~$G$, denoted by~$\tau(G)$, is the largest~$t$ such that~$G$ is $t$-tough.
Since there are no separators in complete graphs, the toughness is defined as~$\infty$ in this case, that is, ${\tau(K_n) = \infty}$ for all~${n \geq 1}$. 
On the other hand, if~$G$ is disconnected, then~${\tau(G) = 0}$. 
The converse also holds, that is, ${\tau(G) = 0}$ if and only if~$G$ is disconnected. 

With the convention that ${\min \emptyset = \infty}$, we can express toughness as
\begin{equation}\label{toughness}
    \tau(G) = \min_{\substack{S\subseteq V(G)\\S \text{ separator}}}\frac{\abs{S}}{c(G-S)}\,.
 \end{equation}
It follows from the definition that for every separator~$S$ of~$G$, we have~${\tau(G) \leq \frac{\abs{S}}{c(G-S)}}$. 
The separators for which equality is attained are called \emph{tough separators}.
Notice that the toughness of any non-complete graph is a rational number. 
Furthermore, by considering a separator~$S$ of minimum size in a non-complete graph~$G$, we immediately obtain the following relation between toughness and connectivity.

\begin{lemma}[Chv\'{a}tal~{\cite[Proposition~1.3]{Chva73}}]\label{lem:toughness-connectivity}
    If~$G$ is a non-complete graph, then ${\tau(G) \leq \frac{\kappa(G)}{2}}$.
\end{lemma}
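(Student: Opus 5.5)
The plan is to take a minimum separator and use it directly in the definition of toughness. Since $G$ is non-complete, it has two non-adjacent vertices $x$ and $y$. The set $V(G)\setminus\{x,y\}$ separates them, so $G$ has at least one separator, and $\kappa(G)$ is finite and at most $\abs{V(G)}-2$.

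First, fix a separator $S$ with $\abs{S}=\kappa(G)$. Such an $S$ exists by the definition of connectivity for non-complete graphs. Indeed, $G$ is not $(\kappa(G)+1)$-connected. We have $\abs{V(G)}\ge \kappa(G)+2$, because a separator leaves at least two vertices. So the failure must come from some set of size at most $\kappa(G)$ whose deletion disconnects $G$. Since $G$ is $\kappa(G)$-connected, no set of size below $\kappa(G)$ disconnects it, and hence this set has size exactly $\kappa(G)$. In the case $\kappa(G)=0$, the graph is disconnected and we take $S=\emptyset$, which separates $G$.

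Next, since $S$ separates $G$, we have $c(G-S)\ge 2$. By the formula~\eqref{toughness} for $\tau(G)$, this gives $\tau(G)\le \abs{S}/c(G-S)\le \kappa(G)/2$.

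No step is a real obstacle. The only care needed is in extracting a separator of size exactly $\kappa(G)$ from the definition of $k$-connectedness, and in handling the disconnected case $\kappa(G)=0$, where the bound reduces to $\tau(G)=0$.
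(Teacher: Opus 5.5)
Your proof is correct and matches the paper's argument: the paper takes a separator $S$ of minimum size, so $|S|=\kappa(G)$ and $c(G-S)\ge 2$, which gives $\tau(G)\le \kappa(G)/2$. Your extraction of such an $S$ from the definition of $k$-connectedness is sound. The separate treatment of $\kappa(G)=0$ is harmless but unnecessary, since the general argument already covers it.
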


For~${x \in \R}$ let~${\ceil{x} \coloneqq \min\{m \in \mathbb{Z}\colon m \geq x \}}$, that is, $\ceil{x}$ is the unique integer in the interval~${[x,x+1)}$. 
Since~$\kappa(G)$ is an integer, we get the following corollary.
\begin{corollary}\label{toughness-connectivity}
    If~$G$ is a non-complete graph, then ${\ceil{2\tau(G)} \leq \kappa(G)}$.
\end{corollary}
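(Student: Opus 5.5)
The corollary follows from the preceding \zcref{lem:toughness-connectivity} together with the integrality of $\kappa(G)$. Fix a non-complete graph $G$. That lemma gives $\tau(G) \leq \kappa(G)/2$, which is equivalent to $2\tau(G) \leq \kappa(G)$. In particular $\tau(G)$ is finite, since $G$ is non-complete, so $\ceil{2\tau(G)}$ is well defined.

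Next I use the defining property of the ceiling as stated in the paper: $\ceil{x} = \min\{m \in \mathbb{Z} \colon m \geq x\}$. Here $\kappa(G)$ is an integer, because connectivity is defined as the largest integer $k$ for which $G$ is $k$-connected. Since $\kappa(G) \geq 2\tau(G)$, the integer $\kappa(G)$ belongs to the set $\{m \in \mathbb{Z} \colon m \geq 2\tau(G)\}$. Its minimum $\ceil{2\tau(G)}$ is therefore at most $\kappa(G)$, which is exactly the claimed inequality.

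There is no real obstacle. The only point needing a moment's care is that both quantities are finite and that $\kappa(G)$ is an integer. Both hold for non-complete graphs: a minimum separator exists, its size equals $\kappa(G)$, and so $\kappa(G)$ is a nonnegative integer. If $G$ is disconnected, then $\tau(G) = 0$ and the inequality reads $0 \leq \kappa(G)$, which is trivial and is also covered by the general argument.
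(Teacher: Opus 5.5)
Your proposal is correct and matches the paper's argument: the paper likewise derives the corollary from Chv\'atal's bound $\tau(G) \leq \kappa(G)/2$ together with the integrality of $\kappa(G)$. Your use of the ceiling as the least integer at least $2\tau(G)$ is exactly the step the paper leaves implicit.
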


Observe that by removing an edge in a graph~$G$, the toughness of~$G$ can either stay the same or it decreases. 
In the extreme case, the removal of any edge in~$G$ decreases the toughness of~$G$. 
This motivates the concept of minimally tough graphs.

\begin{definition}
    For~${t \in \mathbb{R}\cup\{\infty\}}$, 
    a graph~$G$ is called \emph{minimally $t$-tough} if~${\tau(G)=t}$ and ${\tau(G-e)<t}$ for all edges~${e \in E(G)}$.
    A graph is called \emph{minimally tough} if it is minimally $t$-tough for some ${t \in \mathbb{R} \cup \{\infty\}}$.
\end{definition}

Observe that if~$G$ is a disconnected graph, then~$G$ is minimally tough, if and only if~$G$ is minimally $0$-tough, if and only if~$G$ is edgeless.
On the other hand, if~$G$ is a complete graph, then ${\tau(G) = \infty}$ and ${\tau(G-e) < \infty}$, for every edge~${e \in E(G)}$, so complete graphs are minimally tough. 
Recall that we refer to complete graphs and edgeless graphs as \emph{trivially} minimally tough.
From now on, we will thus focus on connected non-complete graphs and study \emph{non-trivially} minimally tough graphs, that is, minimally $t$-tough graphs where~$t$ is a positive rational number.
Note that every non-trivially minimally tough graph is connected. 

We will make frequent use of the following characterization of  graphs that are not minimally tough.

\begin{theorem}[Dallard~et al.~{\cite[Theorem~3.2]{Dall23}}]\label{thm31}
    Let~$G$ be a non-complete graph and let~$t$ be the toughness of~$G$. 
    Then~$G$ is not minimally $t$-tough if and only if~$G$ contains an edge ${e=uv}$ such that both of the following conditions are met:
    \begin{enumerate}[label=\rm{(\arabic*)}]
        \item \label{thm311} ${\kappa_G(u,v)\geq 2t+1}$. 
        \item \label{thm312} Every separator~$S$ in~$G$ that is also a $u$-$v$ separator in~${G-e}$ satisfies
        \[
            \abs{S} \geq t \cdot (c(G-S) + 1).
        \]
    \end{enumerate} 
\end{theorem}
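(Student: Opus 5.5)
We prove the two directions of \zcref{thm31} separately. Throughout, let ${t=\tau(G)}$; since $G$ is non-complete, $t$ is finite.

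\emph{Sufficiency.} Suppose an edge ${e=uv}$ satisfies \ref{thm311} and \ref{thm312}. We show ${\tau(G-e)\ge t}$; together with the trivial inequality ${\tau(G-e)\le t}$ this gives equality, so $G$ is not minimally $t$-tough. Let $S$ be any separator of ${G-e}$; we must show ${\abs{S}\ge t\cdot c(G-e-S)}$. If one of $u,v$ lies in $S$, or if $u,v$ lie in different components of ${G-S}$, then $e$ is irrelevant: ${c(G-e-S)=c(G-S)}$, $S$ also separates $G$, and the bound follows from ${\tau(G)=t}$. So assume ${u,v\notin S}$ and that $u$ and $v$ lie in the same component of ${G-S}$. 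If deleting $e$ does not split that component, the same argument applies. Otherwise $S$ is a $u$-$v$ separator in ${G-e}$, and ${c(G-e-S)=c(G-S)+1}$. There are two subcases.
\begin{enumerate}
    \item If ${c(G-S)\ge 2}$, then $S$ is a separator of $G$ that is also a $u$-$v$ separator in ${G-e}$, so \ref{thm312} gives ${\abs{S}\ge t\cdot(c(G-S)+1)=t\cdot c(G-e-S)}$.
    \item If ${c(G-S)=1}$, then ${c(G-e-S)=2}$. Every $u$-$v$ path in $G$ other than the edge $e$ meets $S$, so by Menger's theorem ${\abs{S}\ge \kappa_G(u,v)-1}$. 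By \ref{thm311} this is at least ${2t=t\cdot c(G-e-S)}$.
\end{enumerate}

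\emph{Necessity.} Suppose $G$ is not minimally $t$-tough, and fix an edge ${e=uv}$ with ${\tau(G-e)=t}$. We check the two conditions.

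For \ref{thm311}, take a minimum $u$-$v$ separator $S$ in ${G-e}$. It satisfies ${\abs{S}=\kappa_{G-e}(u,v)=\kappa_G(u,v)-1}$, since $e$ contributes exactly one path. It separates ${G-e}$ into at least two components, so ${t=\tau(G-e)\le \abs{S}/2}$. Hence ${\kappa_G(u,v)-1\ge 2t}$. One point needs care here: ${G-e}$ must be non-complete for the toughness bound to apply, and it is, because $u$ and $v$ are non-adjacent in ${G-e}$.

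For \ref{thm312}, let $S$ be a separator of $G$ that is a $u$-$v$ separator in ${G-e}$. Then ${c(G-e-S)=c(G-S)+1}$, and $S$ separates ${G-e}$. Therefore ${\abs{S}\ge \tau(G-e)\cdot c(G-e-S)=t\cdot(c(G-S)+1)}$.

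The main subtlety is the bookkeeping of when deleting $e$ increases the component count. This happens exactly when $u$ and $v$ lie outside $S$ and are separated by $S$ in ${G-e}$, which is why the sufficiency direction splits into the cases above. Everything else is a direct application of the definition of toughness and of Menger's theorem, in the local form underlying \zcref{conn-ivdp}.
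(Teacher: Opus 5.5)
Your proof is correct and complete. The paper does not prove this statement itself; it is quoted from Dallard et al.~\cite{Dall23}. So there is no in-paper argument to compare against, and yours stands as a valid self-contained proof.

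It is also the natural argument. A separator $S$ of $G-e$ either already separates $G$ with the same number of components, or $e$ is a bridge of a component of $G-S$. In the bridge case you split according to whether $c(G-S)\ge 2$, handled by (2), or $c(G-S)=1$, handled by (1). The converse direction just reads (1) and (2) off from $\tau(G-e)=t$.

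Two cosmetic remarks:
\begin{itemize}
\item In the sufficiency direction, the case where $u$ and $v$ lie in different components of $G-S$ cannot occur, since $uv\in E(G)$.
\item The bound $|S|\ge \kappa_G(u,v)-1$ in subcase (ii) is the trivial direction: each of the $\kappa_G(u,v)-1$ internally disjoint $u$-$v$ paths other than $(u,v)$ needs its own vertex of $S$. Only the necessity of (1) genuinely uses Menger's theorem, namely the existence of a $u$-$v$ separator of size $\kappa_{G-e}(u,v)=\kappa_G(u,v)-1$ in $G-e$.
\end{itemize}
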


For convenience, we restate the above theorem in the following equivalent way.

\begin{corollary}\label{mintough}
    Let $G$ be a non-complete graph and let $t$ be the toughness of~$G$. 
    Then $G$ is minimally $t$-tough if and only if for every edge $e=uv$ at least one of the following conditions is met:
    \begin{enumerate}[label=\rm{(\arabic*)}]
        \item\label{cond1} ${\kappa_G(u,v)<2t+1}$, or
        \item\label{cond2} there exists a separator~$S$ in~$G$ that is also a $u$-$v$ separator in~${G-e}$, such that
        \[
            \abs{S} < t \cdot (c(G-S)+1).
        \]
    \end{enumerate} 
\end{corollary}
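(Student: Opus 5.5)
The statement is a restatement of \zcref{thm31} in contrapositive form, so the plan is to negate both sides of that theorem and check that the negations match the conditions here.

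The non-completeness hypothesis and the definition $t=\tau(G)$ are common to both statements. By definition, $G$ is minimally $t$-tough exactly when $\tau(G)=t$ and $\tau(G-e)<t$ for every edge $e$. The first requirement holds automatically, so minimal $t$-toughness is precisely the negation of ``$G$ is not minimally $t$-tough.''

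By \zcref{thm31}, $G$ fails to be minimally $t$-tough if and only if some edge $e=uv$ satisfies both \ref{thm311} and \ref{thm312}. Hence $G$ is minimally $t$-tough if and only if, for every edge $e=uv$, at least one of \ref{thm311} or \ref{thm312} fails. This is de Morgan applied to ``there exists an edge with (1) and (2).''

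It remains to identify each failure with the corresponding condition in the corollary. Condition \ref{thm311}, namely $\kappa_G(u,v)\ge 2t+1$, fails exactly when $\kappa_G(u,v)<2t+1$, which is \ref{cond1}. Condition \ref{thm312} states that every separator $S$ of $G$ that is also a $u$-$v$ separator in $G-e$ satisfies $\abs{S}\ge t\,(c(G-S)+1)$. It fails exactly when some such $S$ has $\abs{S}<t\,(c(G-S)+1)$, which is \ref{cond2}. Substituting these two equivalences gives the corollary.

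There is no real obstacle. The only point needing care is to negate the universal quantifier inside \ref{thm312} correctly, turning it into the existential statement in \ref{cond2}.
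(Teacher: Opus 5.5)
Your proposal is correct and follows the paper's approach: the paper presents this corollary as an equivalent restatement of the theorem of Dallard et al., obtained by negating both sides. Your careful negation of the quantifiers (de Morgan over the edges, and turning the universal statement about separators into the existential one in condition~(2)) is exactly the verification this requires.
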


Next, we show that Condition~\ref{cond1} of the above corollary is equivalent to two other conditions. 

\begin{lemma}\label{Cond1-equiv}
    Let~$G$ be a non-complete graph, let~$t$ be the toughness of~$G$, and let~$uv$ be an arbitrary edge in~$G$. 
    Then the following are equivalent:
    \begin{enumerate}[label=\rm{(\roman*)}]
        \item\label{Cond1i} $\kappa_G(u,v) < 2t+1$,
        \item\label{Cond1ii} $\kappa_G(u,v) < \ceil{2t}+1$,
        \item\label{Cond1iii} $\kappa_G(u,v) = \ceil{2t}$.
    \end{enumerate}
\end{lemma}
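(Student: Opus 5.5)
The plan is to prove the cycle of implications \ref{Cond1i} $\Rightarrow$ \ref{Cond1ii} $\Rightarrow$ \ref{Cond1iii} $\Rightarrow$ \ref{Cond1i}. Everything uses only two facts: $\kappa_G(u,v)$ is an integer, and the connectivity bound $\kappa_G(u,v) \geq \kappa(G) \geq \ceil{2t}$. The second fact comes from \zcref{conn-ivdp} combined with \zcref{toughness-connectivity}, which applies because $G$ is non-complete.

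For \ref{Cond1i} $\Rightarrow$ \ref{Cond1ii}, set $k \coloneqq \kappa_G(u,v)$, an integer with $k < 2t+1$. Then $k-1 < 2t$. Since $k-1$ is an integer strictly below $2t$, we get $k-1 < \ceil{2t}$: if instead $k-1 \geq \ceil{2t}$, then $k-1 \geq 2t$, a contradiction. Hence $k < \ceil{2t}+1$.

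For \ref{Cond1ii} $\Rightarrow$ \ref{Cond1iii}, the integer $k$ satisfies $k < \ceil{2t}+1$, so $k \leq \ceil{2t}$. The lower bound gives $k \geq \ceil{2t}$, so equality holds.

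For \ref{Cond1iii} $\Rightarrow$ \ref{Cond1i}, if $k = \ceil{2t}$, then $k < 2t+1$ because $\ceil{x} \in [x,x+1)$ by definition.

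There is no real obstacle here. The only point needing care is the lower bound $\kappa_G(u,v) \geq \ceil{2t}$ used in the middle implication. It is exactly where the non-completeness hypothesis enters, through \zcref{toughness-connectivity}, chained with the left inequality of \zcref{conn-ivdp}.
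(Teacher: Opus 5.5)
Your proposal is correct and follows the paper's proof essentially step for step. You use the same cycle (i)$\Rightarrow$(ii)$\Rightarrow$(iii)$\Rightarrow$(i), with the lower bound $\ceil{2t}\le\kappa(G)\le\kappa_G(u,v)$ doing the work in the middle step. The one small difference is that the integrality detour in your first step is unnecessary: the paper simply notes that $\kappa_G(u,v)<2t+1\le\ceil{2t}+1$.
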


\begin{proof}
    That \ref{Cond1i} implies \ref{Cond1ii} is straightforward, since $2t \leq \ceil{2t}$.
    
    Next, we show that \ref{Cond1ii} implies \ref{Cond1iii}. Assume $\kappa_G(u,v) < \ceil{2t}+1$. 
    By \zcref{toughness-connectivity}, $\ceil{2t} \leq \kappa(G)$. 
    By \zcref{conn-ivdp}, $\kappa(G) \leq  \kappa_G(u,v)$. 
    Hence, $\ceil{2t} \leq \kappa(G) \leq \kappa_G(u,v) < \ceil{2t}+1$ and so $\kappa_G(u,v)=\ceil{2t}$, since $\kappa_G(u,v)$ is an integer.
    
    That \ref{Cond1iii} implies \ref{Cond1i} follows straightforwardly from properties of the ceiling function.
\end{proof}

With \zcref{mintough} we obtain a sufficient condition for a regular graph to be minimally tough.
\begin{lemma}\label{2t-reg}
 Let $G$ be a $\ceil{2t}$-regular graph, where $t$ is the toughness of $G$.
 Then $G$ is minimally tough.
\end{lemma}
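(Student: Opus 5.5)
The plan is to check, via \zcref{mintough}, that Condition~\ref{cond1} holds for every edge, using the equivalent form \ref{Cond1iii} of \zcref{Cond1-equiv}. Let $t=\tau(G)$ and let $uv$ be any edge of $G$. We want to show that $\kappa_G(u,v)=\ceil{2t}$.

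We first handle the degenerate cases. If $G$ is complete, it is trivially minimally tough, so we may assume $G$ is non-complete. Then $t$ is finite and \zcref{mintough} applies.

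For the upper bound, \zcref{conn-ivdp} gives $\kappa_G(u,v)\le\min\{d(u),d(v)\}=\ceil{2t}$, since $G$ is $\ceil{2t}$-regular. For the lower bound, \zcref{toughness-connectivity} gives $\ceil{2t}\le\kappa(G)$, and \zcref{conn-ivdp} again gives $\kappa(G)\le\kappa_G(u,v)$. Together these yield $\kappa_G(u,v)=\ceil{2t}$. By \zcref{Cond1-equiv}, this means $\kappa_G(u,v)<2t+1$, so Condition~\ref{cond1} holds for every edge. Hence $G$ is minimally $t$-tough by \zcref{mintough}.

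There is no real obstacle here. The only points needing care are the complete case, where \zcref{mintough} and \zcref{toughness-connectivity} require non-completeness, and the edgeless/disconnected case. When $t=0$, the graph is $0$-regular, hence edgeless and trivially minimally tough, and the argument above is vacuous anyway because there are no edges.
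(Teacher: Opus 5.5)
Your proof is correct and follows the paper's argument. Both apply \zcref{mintough} with Condition~\ref{cond1}, via the bound $\kappa_G(u,v)\le\min\{d(u),d(v)\}=\ceil{2t}<2t+1$ from \zcref{conn-ivdp}. The only differences are that your lower bound $\ceil{2t}\le\kappa(G)\le\kappa_G(u,v)$ and the detour through \zcref{Cond1-equiv} are unnecessary, and that the paper rules out the complete case by noting a complete graph cannot be $\ceil{2t}$-regular (since $t=\infty$), where you instead note that complete graphs are minimally tough anyway; both treatments are valid.
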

\begin{proof}
Notice that $G\ncong K_n$ for any $n\geq 1$, since $\tau(K_n)=\infty$ but 
$K_n$ is a $(n-1)$-regular graph.
If $t=0$, then $G$ is $0$-regular, non-complete graph, hence, $G$ is minimally tough. 
If $t>0$, then $G$ is a connected, non-complete graph. In particular, $\abs{E(G)}\geq 2$.
Let $e=uv$ be an arbitrary edge of $G$. By \zcref{conn-ivdp}, $\kappa_G(u,v) \leq \min\{d_G(u),d_G(v)\}=\ceil{2t}<2t+1$. 
Hence, Condition~\ref{cond1} of \zcref{mintough} is satisfied for all edges of $G$. 
Thus $G$ is minimally tough.
\end{proof}

\subsection{Minimal toughness of graphs with dominating edges}
A \emph{dominating edge} in a graph $G$ is an edge $e=uv\in E(G)$ such that $N(u) \cup N(v)=V(G)$.
We show that for dominating edges, Condition~\ref{cond2} of \zcref{mintough} is never satisfied.
This is helpful in our characterizations of minimally tough graphs containing dominating edges, such as joins of graphs, complete multipartite graphs, $P_4$-free graphs, complements of forests, and co-chordal graphs with complements of diameter at least~$3$.%

We first characterize dominating edges in two different ways.

\begin{lemma}\label{dom-edge}
    For every edge~$uv$ in a graph~$G$, the following conditions are equivalent. 
    \begin{enumerate}
        \item\label{item:dom-edge} $uv$ is a dominating edge in~$G$.
        \item\label{item:dom-edge-sep} ${S \cap \{u,v\} \neq \emptyset}$ for every separator~$S$ in~$G$.
        \item\label{item:dom-edge-complement-distance3} ${d_{\Gc}(u,v) \geq 3}$.
    \end{enumerate}
\end{lemma}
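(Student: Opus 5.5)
We prove the cycle of implications (1)$\Rightarrow$(2)$\Rightarrow$(3)$\Rightarrow$(1). Throughout, $uv$ is an edge of $G$, so in $\Gc$ the vertices $u$ and $v$ are distinct and non-adjacent; hence $d_{\Gc}(u,v)\geq 2$ always.

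For (1)$\Rightarrow$(2), let $S$ be a separator with $S\cap\{u,v\}=\emptyset$. Since $uv$ is an edge, $u$ and $v$ lie in the same component $C$ of $G-S$. Every vertex $w\notin S$ is adjacent to $u$ or $v$ because $uv$ is dominating, so $w\in C$. Thus $G-S$ has only one component, contradicting that $S$ is a separator.

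For (2)$\Rightarrow$(3), we argue by contrapositive. Suppose $d_{\Gc}(u,v)=2$, and let $w$ be a common neighbour of $u$ and $v$ in $\Gc$; that is, $w\notin N_G(u)\cup N_G(v)$ and $w\neq u,v$. Set $S\coloneqq N_G(u)\cup N_G(v)\setminus\{u,v\}$. Then $S\cap\{u,v\}=\emptyset$, and $G-S$ contains both $u$ and $w$. Any neighbour of $u$ or $v$ other than $u,v$ lies in $S$, so in $G-S$ the component containing $u$ is exactly $\{u,v\}$. This component does not contain $w$, so $c(G-S)\geq 2$. Hence $S$ is a separator avoiding $\{u,v\}$, and (2) fails.

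For (3)$\Rightarrow$(1), suppose $d_{\Gc}(u,v)\geq 3$. If some vertex $w$ were non-adjacent in $G$ to both $u$ and $v$, then $w\neq u,v$ and $(u,w,v)$ would be a path in $\Gc$, giving $d_{\Gc}(u,v)\leq 2$. So every vertex other than $u,v$ lies in $N(u)\cup N(v)$. Moreover $u\in N(v)$ and $v\in N(u)$, so $N(u)\cup N(v)=V(G)$, and $uv$ is dominating.

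Every step is elementary. The only point needing care is the choice of separator in (2)$\Rightarrow$(3): one must remember that $u,v\notin S$ and check that $\{u,v\}$ really is a whole component of $G-S$.
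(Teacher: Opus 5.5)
Your proof is correct and follows the paper's argument essentially step for step, using the same cycle (1)$\Rightarrow$(2)$\Rightarrow$(3)$\Rightarrow$(1). The only cosmetic difference is in (2)$\Rightarrow$(3). The paper takes the separator $V(G)\setminus\{x,u,v\}$, so that $G-S$ is just the edge $uv$ together with the isolated vertex $x$, whereas you delete $(N_G(u)\cup N_G(v))\setminus\{u,v\}$; both choices work for the same reason.
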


\begin{proof}
    Suppose first that~$uv$ is a dominating edge in~$G$ and let~$S$ be an arbitrary separator of~$G$.
    Suppose for a contradiction that~${S \cap \{u,v\} = \emptyset}$. 
    Since the graph~${G[\{u,v\}]}$ is connected and~${S \cap \{u,v\} = \emptyset}$, the vertices~$u$ and~$v$ belong to the same component~$C$ of~${G-S}$.
    Furthermore, since $S$ is a separator of~$G$, there exists another component of~${G-S}$, say $C'$. 
    Let~$w$ be an arbitrary vertex of~$C'$. 
    Then~${w \in V(G) = N(u) \cup N(v)}$.
    We may assume without loss of generality that~${w \in N(u)}$, implying that~$w$ belongs to~$C$, a contradiction. 
    
    Next, suppose that ${S \cap \{u,v\} \neq \emptyset}$ for every separator~$S$ in~$G$.
    Suppose for a contradiction that ${d_{\Gc}(u,v)\leq 2}$. 
    Since~$u$ and~$v$ are adjacent in~$G$, they are not adjacent in~$\Gc$.
    Hence, ${d_{\Gc}(u,v) = 2}$; in particular, $u$ and~$v$ have a common neighbour, say~$x$, in~$\Gc$. 
    Since vertex~$x$ is adjacent to neither~$u$ nor~$v$ in~$G$, the set~${S \coloneqq V(G) \setminus \{x,u,v\}}$ is a separator in~$G$ such that~${S \cap \{u,v\} = \emptyset}$, a contradiction.
    
    Finally, suppose that~${d_{\Gc}(u,v)\geq 3}$. 
    Then~${uv \notin E(\Gc)}$ and~${N_{\Gc}(u) \cap N_{\Gc}(v)=\emptyset}$, or equivalently, ${N_{\Gc}[u] \cap N_{\Gc}[v]=\emptyset}$.
    Hence, ${(V(G)\setminus N_G(u))\cap (V(G)\setminus N_G(v))=\emptyset}$. 
    So ${V(G)\setminus (N_G(u) \cup N_G(v))=\emptyset}$, which implies ${N_G(u) \cup N_G(v)=V(G)}$, that is, $uv$ is a dominating edge in~$G$.  
\end{proof}

\begin{corollary}\label{cor:dom2}
    If~${e=uv}$ is a dominating edge in a graph~$G$, then no separator in~$G$ is also a $u$-$v$ separator in~${G-e}$.
\end{corollary}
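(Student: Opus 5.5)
Suppose for a contradiction that some separator $S$ of $G$ is also a $u$-$v$ separator in $G-e$. The plan is to derive a contradiction directly from \zcref{dom-edge}.

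By the definition of a $u$-$v$ separator in $G-e$, we have $S \subseteq V(G-e)\setminus\{u,v\} = V(G)\setminus\{u,v\}$. In particular, $S \cap \{u,v\} = \emptyset$.

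On the other hand, $S$ is a separator of $G$ and $uv$ is a dominating edge. So the implication from \ref{item:dom-edge} to \ref{item:dom-edge-sep} in \zcref{dom-edge} gives $S \cap \{u,v\} \neq \emptyset$. This contradicts the previous paragraph, which completes the argument.

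There is no real obstacle here. The only point requiring care is to recall that a $u$-$v$ separator, by definition, excludes $u$ and $v$ themselves. It is not even necessary to use the fact that $S$ separates $u$ from $v$ in $G-e$; disjointness from $\{u,v\}$ alone suffices.
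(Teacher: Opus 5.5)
Your proof is correct and is essentially the paper's argument. Both use the implication \ref{item:dom-edge}$\Rightarrow$\ref{item:dom-edge-sep} of \zcref{dom-edge}, that every separator of $G$ meets $\{u,v\}$, together with the fact that a $u$-$v$ separator in $G-e$ by definition avoids $u$ and $v$.
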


\begin{proof}
    By \zcref{dom-edge}, every separator of~$G$ contains~$u$ or~$v$, hence it is not a $u$-$v$ separator in~${G-e}$.
\end{proof}

Next, we relate the local connectivity of a dominating edge in a minimally tough graph with the toughness of the graph.

\begin{lemma}\label{dom-edge-cond1}
    Let~${t}$ be a real number, let~$G$ be a minimally $t$-tough graph, and let~$uv$ be a dominating edge in~$G$. 
    Then~${\kappa_G(u,v) < 2t+1}$.
\end{lemma}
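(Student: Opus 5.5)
We argue by contradiction, combining \zcref{mintough} with \zcref{cor:dom2}. Let $e=uv$ be the given dominating edge and suppose that $\kappa_G(u,v)\geq 2t+1$. We aim to show that $e$ fails both conditions of \zcref{mintough}, so that $G$ is not minimally $t$-tough.

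First we dispose of degenerate cases so that \zcref{mintough} applies. If $G$ is complete, then $t=\infty$, and the inequality $\kappa_G(u,v)<2t+1=\infty$ holds trivially. Disconnected graphs cannot occur: a dominating edge forces $G$ to be connected, since every vertex is adjacent to $u$ or $v$. Hence we may assume that $G$ is non-complete and connected, with $t=\tau(G)$ a positive real number, and we may invoke \zcref{mintough}.

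Since $G$ is minimally $t$-tough, for the edge $e=uv$ at least one of Conditions~\ref{cond1} and~\ref{cond2} of \zcref{mintough} holds. By \zcref{cor:dom2}, no separator of $G$ is also a $u$-$v$ separator in $G-e$, so Condition~\ref{cond2} holds vacuously false. Therefore Condition~\ref{cond1} must hold, that is, $\kappa_G(u,v)<2t+1$. This contradicts our assumption and completes the argument.

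There is no real obstacle in this proof. The only point requiring care is to check that the hypotheses of \zcref{mintough} hold (namely that $G$ is non-complete) or to handle the complete case separately; once that is done, the conclusion follows in one line.
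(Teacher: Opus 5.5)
Your proof is correct and is essentially the paper's argument: minimal toughness forces one of the two conditions of \zcref{mintough} for $uv$, and \zcref{cor:dom2} rules out Condition~\ref{cond2}, leaving $\kappa_G(u,v)<2t+1$. The separate treatment of the complete case is unnecessary, since $t$ is assumed to be real and so $G$ is non-complete, and the contradiction framing can be dropped because the argument is direct.
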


\begin{proof}
Since $G$ is minimally tough, at least one of Conditions~\ref{cond1} or~\ref{cond2} of \zcref{mintough} holds for the edge $e=uv$. But since $uv$ is a dominating edge, by \zcref{cor:dom2}, there is no separator in $G$ that is also a $u$-$v$ separator in $G-e$, hence Condition~\ref{cond2} of \zcref{mintough} does not hold. So, Condition~\ref{cond1} must hold, i.e., $\kappa_G(u,v) < 2t+1$.
\end{proof}

\zcref{Cond1-equiv,conn-ivdp,toughness-connectivity,dom-edge-cond1} imply the following.

\begin{corollary}\label{cor:dom-edge-cond1}
 Let~${t}$ be a real number, let $G$ be a minimally $t$-tough graph, and let $uv$ be a dominating edge in $G$. 
    Then $\kappa(G) = \kappa_G(u,v)= \ceil{2t}$. 
\end{corollary}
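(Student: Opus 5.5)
The plan is to chain the earlier results into a single sandwich of inequalities. First I fix the setting. Since $G$ has a dominating edge, $G$ has at least two vertices. We may assume $G$ is non-complete, since the statement concerns finite $t$: \zcref{toughness-connectivity} requires non-completeness, and $\lceil 2t\rceil$ must make sense.

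By \zcref{dom-edge-cond1}, the edge $uv$ satisfies $\kappa_G(u,v) < 2t+1$. This is Condition~\ref{Cond1i} of \zcref{Cond1-equiv}. Applying that lemma to the edge $uv$ gives Condition~\ref{Cond1iii}, namely $\kappa_G(u,v) = \lceil 2t \rceil$.

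Next I bring in the global connectivity. By \zcref{toughness-connectivity}, $\lceil 2t\rceil \le \kappa(G)$. By \zcref{conn-ivdp}, $\kappa(G) \le \kappa_G(u,v)$. Together these give
\[
\lceil 2t\rceil \le \kappa(G) \le \kappa_G(u,v) = \lceil 2t\rceil ,
\]
so all three quantities coincide, which is the claim.

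The argument is essentially bookkeeping. The only point needing care is the degenerate case. If $t=0$, then $G$ is edgeless and so has no dominating edge, so this case does not arise. If $G$ is complete, then $t=\infty$ and the statement falls outside the real-$t$ hypothesis. With these cases excluded, the hypotheses of \zcref{Cond1-equiv} and \zcref{toughness-connectivity} hold and the chain above goes through.
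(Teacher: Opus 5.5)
Your proposal is correct and matches the paper's argument: the paper obtains this corollary by combining \zcref{dom-edge-cond1} with \zcref{Cond1-equiv}, \zcref{toughness-connectivity}, and \zcref{conn-ivdp}, which gives exactly your sandwich $\lceil 2t\rceil \le \kappa(G) \le \kappa_G(u,v) = \lceil 2t\rceil$. Your treatment of the degenerate cases is also sound: a real $t$ forces $G$ to be non-complete, and that is the only hypothesis the cited results need.
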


As a first concrete application of dominating edges to minimal toughness, we show that complete graphs are the only minimally tough graphs that can have more than one \emph{universal vertex}, that is, a vertex adjacent to all other vertices.

\begin{lemma}\label{twoUniversalVertices}
Every non-complete minimally tough graph has at most one universal vertex.
\end{lemma}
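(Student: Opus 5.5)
Suppose $G$ is a non-complete minimally $t$-tough graph with two distinct universal vertices $u$ and $v$; we aim for a contradiction. Since $G$ is non-complete and minimally tough, it is non-trivially minimally tough or edgeless. An edgeless graph has no universal vertex unless it has exactly one vertex, which is complete. So $G$ is connected and $t>0$ is finite. The edge $uv$ is dominating, since already $N(u)\cup N(v)=V(G)$. Hence \zcref{cor:dom-edge-cond1} gives $\kappa(G)=\kappa_G(u,v)=\ceil{2t}$.

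The key step is to compute $\kappa_G(u,v)$ and $\kappa(G)$ directly. Every vertex $w\ne u,v$ is a common neighbour of $u$ and $v$, so the paths $(u,v)$ and $(u,w,v)$ give $\kappa_G(u,v)=n-1$, where $n=\abs{V(G)}$. This matches the upper bound from \zcref{conn-ivdp}, since $d(u)=n-1$. On the other hand $\kappa(G)\le n-2$, because $G$ is non-complete. Indeed, if $x,y$ are non-adjacent, then $V(G)\setminus\{x,y\}$ separates them, so $\kappa(G)\le n-2$. This contradicts $\kappa(G)=\kappa_G(u,v)$.

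An equivalent route is through \zcref{dom-edge-cond1} and \zcref{lem:toughness-connectivity}: we would need $n-1<2t+1\le\kappa(G)+1\le n-1$, which is also a contradiction.

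There is essentially no obstacle. The only points needing care are excluding the trivial cases, so that $t$ is a positive real and the corollaries apply, and noting that non-completeness forces $\kappa(G)\le n-2$.
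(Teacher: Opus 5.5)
Your proof is correct and follows essentially the same route as the paper. Both arguments use that $uv$ is dominating, so \zcref{cor:dom-edge-cond1} applies and gives $\kappa_G(u,v)=\lceil 2t\rceil\le\kappa(G)$. Both then compute $\kappa_G(u,v)=n-1$ from the $n-2$ common neighbours and contradict this with $\kappa(G)\le n-2$, which follows from non-completeness; your explicit exclusion of the edgeless case is harmless extra care.
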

\begin{proof}
Let $G$ be a non-complete minimally tough graph of order $n$ and let $t$ be the toughness of $G$. Suppose that $G$ has at least two universal vertices, say $u$ and $v$. Then $\abs{N_G(u) \cap N_G(v)}=n-2$ and so $\kappa_G(u,v)=n-1$. Since $G$ is not complete, $\kappa(G) \leq n-2$ and so, by \zcref{toughness-connectivity}, $\ceil{2t}\leq n-2$. 
On the other hand, $uv$ is a dominating edge, hence, by \zcref{cor:dom-edge-cond1}, $\kappa_G(u,v) =\ceil{2t}$. This gives us $n-1=\kappa_G(u,v)=\ceil{2t} \leq n-2$, a contradiction.
\end{proof}

\subsection{Toughness of complete multipartite graphs}

In 1973, Chv\'{a}tal determined the toughness of complete bipartite graphs.

\begin{proposition}[Chv\'{a}tal~{\cite{Chva73}}]\label{toughness-complete-bipartite} 
For any two nonnegative integers~$m\le n$ with $n\ge 2$, the toughness of the complete bipartite graph $K_{m,n}$ is equal to $\frac{m}{n}$.
\end{proposition}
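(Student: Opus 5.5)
We need to show $\tau(K_{m,n}) = m/n$ for $0\le m\le n$, $n\ge 2$. The argument gives an upper bound from one explicit separator and a matching lower bound from the structure of all separators.

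First dispose of the degenerate case $m=0$. Then $K_{0,n}$ is the edgeless graph on $n\ge 2$ vertices. It is disconnected, so its toughness is $0 = m/n$.

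Now let $m\ge 1$, with parts $A$ and $B$ where $\abs{A}=m$ and $\abs{B}=n$. Since $n\ge 2$, two vertices of $B$ are non-adjacent, so the graph is non-complete and formula~\eqref{toughness} applies.

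For the upper bound, take $S=A$. Then $G-S$ consists of $n\ge 2$ isolated vertices, so $S$ is a separator and $\tau\le m/n$.

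For the lower bound, let $S$ be any separator. The key structural observation is that if $G-S$ contained a vertex of $A$ and a vertex of $B$, then it would be connected. Indeed, each remaining vertex of $A$ is adjacent to all remaining vertices of $B$ and vice versa, so $G-S$ would be a complete bipartite graph with both sides nonempty.

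Hence $G-S$ lies entirely inside $A$ or entirely inside $B$, and is therefore edgeless. Consequently $c(G-S)=\abs{V(G)\setminus S}$.

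Suppose first that $G-S\subseteq B$. Then $A\subseteq S$. Write $S=A\cup S_B$ with $S_B\subseteq B$ and $k=\abs{S_B}$. We need
\[
\frac{m+k}{n-k}\ \ge\ \frac{m}{n}.
\]
This holds because the numerator does not decrease and the denominator does not increase as $k$ grows, using $n-k\ge 2>0$.

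Suppose instead that $G-S\subseteq A$. Then $B\subseteq S$, and by the same reasoning
\[
\frac{n+k}{m-k}\ \ge\ \frac{n}{m}\ \ge\ 1\ \ge\ \frac{m}{n}.
\]
Here $m-k\ge 2$, which in particular forces $m\ge 2$ in this case.

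In either case $\abs{S}/c(G-S)\ge m/n$, which combined with the upper bound gives $\tau(K_{m,n}) = m/n$. No step is a genuine obstacle. The only point needing care is the structural claim that a separator must contain one entire part, which is the completeness argument above.
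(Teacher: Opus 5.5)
Your proof is correct and is essentially the paper's argument: the paper only cites Chv\'{a}tal for this statement, but it proves the generalization \zcref{prop3} the same way. There it shows that every separator contains all parts but one (\zcref{lem:sep}), then bounds $|S|$ below by the number of vertices outside a largest part and $c(G-S)$ above by the size of a largest part. The differences are cosmetic: the paper gets the structural fact from dominating edges, whereas you get it directly from the connectedness of a complete bipartite graph with both sides nonempty, and you split into two cases instead of using one uniform bound.
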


Using the characterization of dominating edges given by \zcref{dom-edge}, we now generalize Chv\'{a}tal's result by determining the toughness of arbitrary complete multipartite graphs.
Each complete multipartite graphs with at least two parts is a join of two smaller graphs; in particular, every such graph contains a dominating edge.

\begin{observation}\label{join-edge-is-dominating}
Let $G_1$ and $G_2$ be arbitrary disjoint graphs and $u \in V(G_1)$ and $v \in V(G_2)$ be arbitrary vertices. 
Let $G=G_1*G_2$.  
Then $N_G(u)=N_{G_1}(u) \cup V(G_2)$ and $N_G(v)=N_{G_2}(v) \cup V(G_1)$. In particular, $uv$ is a dominating edge in $G$.
\end{observation}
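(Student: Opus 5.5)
This observation follows directly from the definition of the join, so the plan is a one-line verification of each claim.

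First, I compute the neighbourhoods. In $G=G_1*G_2$ the edge set consists of $E(G_1)$, $E(G_2)$, and all pairs $xy$ with $x\in V(G_1)$ and $y\in V(G_2)$. Hence the neighbours of $u\in V(G_1)$ in $G$ are of two kinds: its neighbours inside $G_1$, which form $N_{G_1}(u)$, and every vertex of $V(G_2)$. No edge of $E(G_2)$ is incident with $u$. Therefore $N_G(u)=N_{G_1}(u)\cup V(G_2)$, and the formula $N_G(v)=N_{G_2}(v)\cup V(G_1)$ follows symmetrically.

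Next, I check that $uv$ is a dominating edge. It is an edge of $G$ because $u\in V(G_1)$ and $v\in V(G_2)$. For domination, note that $N_G(u)\supseteq V(G_2)$ and $N_G(v)\supseteq V(G_1)$. So $N_G(u)\cup N_G(v)\supseteq V(G_1)\cup V(G_2)=V(G)$, and since both sets lie in $V(G)$, equality holds, as the definition requires.

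As an alternative route to the same conclusion, in $\overline{G}$ there are no edges between $V(G_1)$ and $V(G_2)$. Thus $d_{\Gc}(u,v)=\infty\ge 3$, and \zcref{dom-edge} applies.

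There is no real obstacle here. The only care needed is to include both containments ($\subseteq$ and $\supseteq$) in the neighbourhood identity, and to note that $u$ itself belongs to $N_G(v)$, which is what lets the two neighbourhoods cover all of $V(G)$.
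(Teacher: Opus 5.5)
Your proposal is correct and matches the paper: the paper states this as an observation with no written proof, since it follows directly from the definition of the join, which is exactly the direct verification you give. Your alternative route through the distance characterization of dominating edges, with $d_{\overline{G}}(u,v)=\infty\ge 3$, is also valid and not circular, because that lemma is proved earlier and independently of this observation.
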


We will also need the following lemma. 

\begin{lemma}\label{lem:sep}
Let $G$ be a non-complete graph such that the complement $\overline{G}$ is disconnected and let $V_1,\ldots,V_k$ be the vertex sets of the components of $\overline{G}$. 
Let $S$ be an arbitrary separator of $G$. 
Then, there exists $i \in \{1,\ldots, k\}$ such that $V_j \subseteq S$ for all $ j \neq i$.
\end{lemma}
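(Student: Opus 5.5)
The plan is a direct argument using only the fact that every vertex of $V_i$ is adjacent in $G$ to every vertex of $V_j$ for $i \neq j$, since distinct components of $\overline{G}$ have no edges of $\overline{G}$ between them. The statement says that the surviving vertices $V(G) \setminus S$ all lie in a single part $V_i$.

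Let $S$ be a separator of $G$. Suppose, for a contradiction, that there are two distinct indices $i \neq j$ such that neither $V_i$ nor $V_j$ is contained in $S$. Pick $x \in V_i \setminus S$ and $y \in V_j \setminus S$.

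The key step is to show that $G - S$ is connected, which contradicts the choice of $S$. Let $w$ be any vertex of $G - S$; it lies in some part $V_\ell$.
\begin{itemize}
\item If $\ell \neq i$, then $w$ is adjacent to $x$ in $G$, since they lie in different components of $\overline{G}$.
\item If $\ell = i$, then $w$ is adjacent to $y$, because $j \neq i$.
\end{itemize}
Moreover, $x$ and $y$ are adjacent to each other. Hence every vertex of $G - S$ lies in the connected subgraph spanned by $x$, $y$ and their neighbours in $G - S$. So $G - S$ is connected, contradicting that $S$ is a separator.

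It follows that at most one index $i$ has $V_i \not\subseteq S$. If there is such an index, we take it. If instead every $V_j \subseteq S$, then $G - S$ is empty, which is impossible for a separator; in any case any choice of $i$ would then work. In both cases we obtain an $i$ with $V_j \subseteq S$ for all $j \neq i$.

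There is no real obstacle here. The only point needing care is checking that the join structure forces connectivity, which is the case analysis on $\ell$ above. The non-completeness hypothesis just guarantees that separators exist, so the statement is not vacuous.
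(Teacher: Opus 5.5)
Your proof is correct and is essentially the paper's argument. The paper takes the same two surviving vertices $u\in V_i\setminus S$ and $v\in V_j\setminus S$ and observes that $uv$ is a dominating edge of the join $G[V_i]*(G-V_i)$. It then cites its earlier lemma that every separator meets every dominating edge, and the proof of that lemma is exactly your direct check that every vertex of $G-S$ is adjacent to $x$ or $y$.
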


\begin{proof}
Suppose for a contradiction that there are two distinct parts $V_i$ and $V_j$ that are not fully included in $S$. 
Then, there are two vertices $u \in V_i$ and $v \in V_j$ such that $u,v \notin S$. 
Observe that $G$ is the join of the graphs $G_1\coloneqq G[V_i]$ and $G_2\coloneqq G-V_i$.
Since $u\in V(G_1)$ and $v\in V(G_2)$, by \zcref{join-edge-is-dominating}, $uv$ is a dominating edge in $G$. 
Hence, by \zcref{dom-edge}, $S\cap \{u,v\} \neq \emptyset$, a contradiction.  
\end{proof}

With the above lemma we can now determine the toughness of complete multipartite graphs, generalizing \zcref{toughness-complete-bipartite}.

\begin{proposition}\label{prop3} 
 Let~$n = \sum_{i=1}^{k} n_i$ with~${1\leq n_1 \leq \ldots \leq n_k}$ and~$n_k>1$. 
 Then the toughness of the complete multipartite graph $K_{n_1,\ldots,n_k}$ is given by
 \[\tau(K_{n_1,\ldots,n_k})=\frac{n}{n_k}-1\,.\]
\end{proposition}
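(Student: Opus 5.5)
Let $G=K_{n_1,\ldots,n_k}$ with parts $V_1,\ldots,V_k$, where $\abs{V_i}=n_i$. The plan is to prove two inequalities. For the upper bound, I exhibit a separator attaining $\frac{n}{n_k}-1$. For the lower bound, I use \zcref{lem:sep} to restrict the form of every separator and then bound the ratio directly.

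First note that $G$ is non-complete because $n_k>1$. For the upper bound, take $S=V(G)\setminus V_k$. Then $G-S=G[V_k]$ is edgeless with $n_k\ge 2$ vertices, so $S$ is a separator and
\[
\frac{\abs{S}}{c(G-S)}=\frac{n-n_k}{n_k}=\frac{n}{n_k}-1.
\]
Hence $\tau(G)\le \frac{n}{n_k}-1$. This covers the case $k=1$ as well: there $S=\emptyset$, $G$ is edgeless, and the ratio is $0$.

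For the lower bound, let $S$ be an arbitrary separator of $G$. The complement $\overline{G}$ is the disjoint union of the cliques on $V_1,\ldots,V_k$, so its components are exactly the parts. When $k\ge 2$, $\overline{G}$ is disconnected, and \zcref{lem:sep} gives an index $i$ with $V_j\subseteq S$ for all $j\neq i$. (When $k=1$ the claim is trivial.) Therefore $G-S=G[V_i\setminus S]$, which is edgeless, so $c(G-S)=\abs{V_i\setminus S}=:m$, and $m\ge 2$ because $S$ separates. We also have $\abs{S}=n-m$. Thus
\[
\frac{\abs{S}}{c(G-S)}=\frac{n-m}{m}=\frac{n}{m}-1\ \ge\ \frac{n}{n_k}-1,
\]
since $m\le n_i\le n_k$. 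Taking the minimum over all separators in \eqref{toughness} gives $\tau(G)\ge \frac{n}{n_k}-1$.

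No step is really an obstacle. The structural work is already done by \zcref{lem:sep}. The only points to check are that the components of $\overline{G}$ are exactly the parts, so that the lemma applies, and that $G-S$ lies inside a single part and is therefore edgeless, so the number of components equals the number of remaining vertices.
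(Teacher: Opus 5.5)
Your proof is correct and follows essentially the same route as the paper: \zcref{lem:sep} forces every separator to contain all parts except one, which gives the lower bound $\frac{n}{n_k}-1$, and the separator $S=V(G)\setminus V_k$ attains it. The only difference is cosmetic. You compute $c(G-S)=\abs{V_i\setminus S}$ and $\abs{S}=n-c(G-S)$ exactly, whereas the paper combines the two separate inequalities $\abs{S}\ge n-n_k$ and $c(G-S)\le n_k$.
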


\begin{proof} 
Let~${G = K_{n_1,\ldots,n_k}}$.
Since~${n_k>1}$, the graph~$G$ is not complete.
Furthermore, if~${k = 1}$, then~$G$ is disconnected, hence~${\tau(G) = 0}$ and the formula holds, since~${n = n_1}$. 
So let~${k \geq 2}$ and let ${V_1, \ldots, V_k}$ be the parts of~$G$ such that~${\abs{V_i}=n_i}$ for all ${i \in [k]}$. 
Let $S$ be a separator of $G$. 
Since $V_1,\ldots,V_k$ are the vertex sets of the components of $\overline{G}$, by \zcref{lem:sep}, $S$ must contain all the vertices of all but one part, say $V_i$. 
From this it follows that $\abs{S}\geq n-n_i\geq n-n_k$.
Observe also that $c(G-S)\leq n_i\leq n_k$. 
Hence $\frac{\abs{S}}{c(G-S)} \geq \frac{n-n_k}{n_k}=\frac{n}{n_k}-1$ for all separators $S$. 
Furthermore, equality is attained for $S=V(G)\setminus V_k$. 
Thus, by~\eqref{toughness}, $\tau(G)=\frac{n}{n_k}-1$. 
\end{proof}

\section{Joins of graphs}
\label{sec:joins}

In this section we explore minimal toughness of joins of graphs.
We first prove the join condition (\zcref{joincondition}): if one of the two graphs contains a vertex of maximum degree in the join, then the other graph must be regular.
We restate it here in greater detail. 

\begin{theorem}\label{G2-regular} 
Let~${t}$ be a real number and let~$G$ be a minimally $t$-tough graph such that ${G = G_1*G_2}$ is the join of graphs $G_1$ and~$G_2$. 
Let $t_2$ be the toughness of $G_2$. 
Suppose furthermore that~$G_1$ contains a vertex of maximum degree in~$G$. 
Then $G_2$ is a $\ceil{2t_2}$-regular, minimally tough graph and ${\ceil{2t}=\ceil{2t_2}+\abs{V(G_1)}}$.
\end{theorem}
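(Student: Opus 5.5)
The plan is to read off the degrees in $G_2$ from the local connectivity of dominating edges, and then compare $t$ with $t_2$ through separators of $G$ of the form $V(G_1)\cup S_2$. Let $n_1 \coloneqq \abs{V(G_1)}$, and fix a vertex $u\in V(G_1)$ with $d_G(u)=\Delta(G)$.

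First I dispose of the degenerate cases. $G$ is not complete: a minimally $\infty$-tough graph is complete, and then $G_1$, $G_2$ would be complete, which the statement does not intend. So $t$ is finite, and $t>0$ because a join is connected. Next, $G_2$ is not complete. Otherwise every vertex of $G_2$ would be universal in $G$, hence of maximum degree $n-1$, so $u$ would also be universal. Together with any vertex of $G_2$ this gives two universal vertices, contradicting \zcref{twoUniversalVertices}. Hence $t_2$ is finite.

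Next, the regularity of $G_2$. For every $v\in V(G_2)$, the edge $uv$ is dominating by \zcref{join-edge-is-dominating}. So \zcref{cor:dom-edge-cond1} gives $\kappa_G(u,v)=\ceil{2t}$. On the other hand, \zcref{lem:kuv} gives $\kappa_G(u,v)=\min\{d_G(u),d_G(v)\}=d_G(v)$, since $d_G(u)$ is maximum. Therefore $d_{G_2}(v)=d_G(v)-n_1=\ceil{2t}-n_1 \eqqcolon r$ for every $v\in V(G_2)$. Thus $G_2$ is $r$-regular, and it remains to show that $r=\ceil{2t_2}$.

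For $r \ge \ceil{2t_2}$: since $G_2$ is non-complete, \zcref{toughness-connectivity} and \zcref{conn-ivdp} give $\ceil{2t_2}\le \kappa(G_2)\le \delta(G_2)=r$. For $r\le\ceil{2t_2}$: take a tough separator $S_2$ of $G_2$, so $\abs{S_2}=t_2 c$ with $c=c(G_2-S_2)\ge 2$. The set $S\coloneqq V(G_1)\cup S_2$ separates $G$, with $G-S=G_2-S_2$, so
\[
t\le \frac{n_1+t_2c}{c}\le \frac{n_1}{2}+t_2 .
\]
Hence $2t\le n_1+2t_2$, and taking ceilings (with $n_1$ an integer) gives $\ceil{2t}\le n_1+\ceil{2t_2}$, that is, $r\le \ceil{2t_2}$.

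So $r=\ceil{2t_2}$, which is exactly the claimed identity $\ceil{2t}=\ceil{2t_2}+n_1$. Finally, $G_2$ is $\ceil{2t_2}$-regular, so \zcref{2t-reg} shows that $G_2$ is minimally tough.

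The only delicate point is the case analysis in the first step, namely ruling out that $G_2$ is complete so that $t_2$ is finite; the remainder is a direct chaining of earlier lemmas.
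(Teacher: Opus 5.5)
Your proof is correct and follows the paper's route: dominating edges $uv$ with $u\in V(G_1)$ of maximum degree yield $d_{G_2}(v)=\lceil 2t\rceil-\lvert V(G_1)\rvert$ for every $v\in V(G_2)$. The separator $V(G_1)\cup S_2$ bounds this degree above by $\lceil 2t_2\rceil$, the chain $\lceil 2t_2\rceil\le\kappa(G_2)\le\delta(G_2)$ bounds it below, and $G_2$ is non-complete by the two-universal-vertices argument, just as in the paper. One minor point: you do not need to argue about what the statement ``intends'' to see that $G$ is non-complete, since this follows directly from the hypothesis that $t$ is a real number.
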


\begin{proof}
Let $n_1=\abs{V(G_1)}$.
Let $v \in V(G_1)$ be a vertex of maximum degree in $G$, i.e., $d_{G}(v)=\Delta(G)$. 
Let $u$ be an arbitrary vertex of~$G_2$. 
Since $uv$ is a dominating edge in $G$ (by \zcref{join-edge-is-dominating}), we get, by \zcref{cor:dom-edge-cond1}, that ${\kappa_{G}(u,v)=\ceil{2t}}$. 
On the other hand, by \zcref{lem:kuv}, ${\kappa_{G}(u,v)=\min\{d_{G}(u),d_{G}(v)\}=d_{G}(u)}$. 
Notice that $d_{G}(u)=d_{G_2}(u)+n_1$, and so 
\begin{equation}\label{eq:g2reg}
d_{G_2}(u)+n_1 =\ceil{2t}.
\end{equation}

Notice that~$G_2$ is non-complete, as otherwise, each vertex in~$G_2$ would be universal in~$G$, and since then~$v$ is also universal in~$G$, this would contradict \zcref{twoUniversalVertices}.
Hence, there exists some separator in~$G_2$. 
Let~$S_2$ be a tough separator in~$G_2$. 
Then ${t_2=\frac{\abs{S_2}}{c(G_2-S_2)}}$. 
Furthermore, ${S=S_2 \cup V(G_1)}$ is a separator in $G$ and ${G-S=G_2-S_2}$, implying ${c(G-S)=c(G_2-S_2)}$. 
Hence \[{t \leq  \frac{\abs{S}}{c(G-S)}=\frac{\abs{S_2}+n_1}{c(G_2-S_2)}=t_2+\frac{n_1}{c(G_2-S_2)}}.\] 
And since $c(G_2-S_2) \geq 2$, we get 
\begin{equation}\label{tt_2}
t \leq t_2 + \frac{n_1}{2}\,.
\end{equation}
Combining~\eqref{eq:g2reg} and \eqref{tt_2} yields ${d_{G_2}(u) + n_1 =\ceil{2t} \leq \ceil{2t_2 +n_1}=\ceil{2t_2}+n_1}$, hence ${d_{G_2}(u) \leq \ceil{2t_2}}$.

On the other hand, by \zcref{toughness-connectivity} and 
since~$\kappa(G_2) \leq \delta(G_2)$, it follows that \[{{\ceil{2t_2} \leq  \kappa(G_2) \leq \delta(G_2) \leq d_{G_2}(u)}}.\] 
We thus have $\ceil{2t_2} \leq d_{G_2}(u)\leq \ceil{2t_2}$, so $d_{G_2}(u)=\ceil{2t_2}$ for every $u \in V(G_2)$. 
Hence, $G_2$ is $\ceil{2t_2}$-regular and, by \zcref{2t-reg}, minimally tough.
Combining the equality $d_{G_2}(u)=\ceil{2t_2}$ with \eqref{eq:g2reg} 
 gives us $\ceil{2t}=\ceil{2t_2}+n_1$, as claimed.
\end{proof}

As an easy consequence, we observe the following. 

\begin{remark}\label{G1G2-regular}
 If in \zcref{G2-regular} both $G_1$ and $G_2$ contain a vertex of maximum degree in $G=G_1*G_2$, then both $G_1$ and $G_2$ must be regular.
\end{remark}

Next, we derive an important result from \zcref{G2-regular}, reducing the problem of characterizing certain minimally tough joins of graphs to a particular infinite family of graphs.
This result will be the main tool for proof of \zcref{P4free:mintough} (in \zcref{sec:P4free}) and for one of our partial characterizations of minimally tough co-chordal graphs (see \zcref{characterization-co-chordal-disconnected}).

\begin{lemma}\label{MinTough-Join-Disconnected} 
Let $G$ be a minimally tough graph such that $G=G_1*G_2$ is the join of graphs $G_1$ and~$G_2$ such that  $G_1$ contains a vertex of maximum degree in $G$ and $G_2$ is disconnected.
Then~$G$ is isomorphic to one of
$K_{1,\ell}$, 
$K_{2,3}$,
$\turan{2\ell}{\ell}$, or
$\turan{2\ell-1}{\ell}$ for some~${\ell \geq 2}$.
\end{lemma}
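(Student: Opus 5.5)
The plan is to apply \zcref{G2-regular} first, which pins down $G_2$ completely. Then a counting argument comparing the bounds $t \le n_1/m$ and $\ceil{2t}=n_1$ restricts the sizes. Finally a minimum-degree argument determines $G_1$.

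\textbf{Step 1: $G_2$ is edgeless.} Let $t=\tau(G)$, $n_1=\abs{V(G_1)}$ and $m=\abs{V(G_2)}$. Since $G_2$ is disconnected, $t_2=\tau(G_2)=0$. By \zcref{G2-regular}, $G_2$ is $\ceil{0}=0$-regular, so $G_2$ is edgeless on $m\ge 2$ vertices, and $\ceil{2t}=n_1$. In particular $t>0$, since $n_1\ge 1$.

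\textbf{Step 2: bounding $m$ and $n_1$.} The set $V(G_1)$ is a separator of $G$ leaving $m$ isolated vertices, so $t\le n_1/m$. On the other hand, $\ceil{2t}=n_1$ gives $2t>n_1-1$. Combining these yields $m(n_1-1)<2n_1$.
\begin{itemize}
\item If $n_1=1$, then $G\cong K_{1,m}$, which is $K_{1,\ell}$ with $\ell=m\ge 2$.
\item If $n_1=2$, then $m\in\{2,3\}$.
\item If $n_1\ge 3$, then $m<2+2/(n_1-1)\le 3$, so $m=2$.
\end{itemize}
Consider the exceptional case $n_1=2$, $m=3$. If $G_1\cong K_2$, both vertices of $G_1$ are universal in $G$, contradicting \zcref{twoUniversalVertices}. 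Hence $G_1\cong 2K_1$ and $G\cong K_{2,3}$.

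\textbf{Step 3: the case $m=2$.} Here $G_2$ consists of two nonadjacent vertices. By \zcref{toughness-connectivity}, $\delta(G)\ge\kappa(G)\ge\ceil{2t}=n_1$. Every $w\in V(G_1)$ has $d_G(w)=d_{G_1}(w)+2$, so $d_{G_1}(w)\ge n_1-2$. Thus $\overline{G_1}$ has maximum degree at most $1$, i.e.\ it is a matching.

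A vertex of $G_1$ left unmatched in $\overline{G_1}$ is universal in $G$. So by \zcref{twoUniversalVertices} at most one such vertex exists. Therefore $\overline{G}$ is a disjoint union of copies of $K_2$ (including the one formed by $V(G_2)$) and at most one $K_1$. Hence $G$ is complete multipartite with all parts of size $2$ except possibly one part of size $1$, i.e.\ $G\cong\turan{2\ell}{\ell}$ or $\turan{2\ell-1}{\ell}$, where $\ell\ge 2$ is the number of parts.

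The main obstacle is Step 3. The key idea there is to obtain the lower bound on degrees inside $G_1$ from the connectivity bound, rather than trying to exploit the maximum-degree hypothesis directly. Everything else is routine arithmetic.
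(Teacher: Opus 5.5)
Your proof is correct, but Step 3 takes a different route from the paper.

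Steps 1 and 2 essentially match the paper. The paper also first applies \zcref{G2-regular} to make $G_2$ edgeless, and then bounds $n_2$ using $t\le n_1/n_2$. The only difference is how $n_1-1<2t$ is obtained. The paper computes $\kappa_G(u,v)=n_1$ via \zcref{lem:kuv} and then invokes \zcref{dom-edge-cond1}. You read it off directly from the identity $\ceil{2t}=\ceil{2t_2}+n_1$ in \zcref{G2-regular}, which is slightly more economical.

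The real divergence is the case $n_2=2$. The paper uses the maximum-degree hypothesis in the form $n_1\le n_2+\Delta(G_1)$, which gives $\Delta(G_1)\in\{n_1-2,n_1-1\}$. It then applies the join condition again in each subcase:
\begin{itemize}
\item when $\Delta(G_1)=n_1-2$, via \zcref{G1G2-regular}: both sides contain a maximum-degree vertex, so $G_1$ is regular;
\item when $\Delta(G_1)=n_1-1$, so that $v$ is universal, via \zcref{G2-regular} applied to $G[\{v\}]*(G-v)$, which shows $G-v$ is regular.
\end{itemize}

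You instead use the uniform bound $\delta(G)\ge\kappa(G)\ge\ceil{2t}=n_1$ from \zcref{toughness-connectivity}. This immediately forces $\Delta(\overline{G_1})\le 1$, and \zcref{twoUniversalVertices} then allows at most one isolated vertex in $\overline{G_1}$.

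Your route avoids the case split and the repeated appeals to the join condition, and it produces both Tur\'an families at once. It also shows that after Step 1 the maximum-degree hypothesis is no longer needed. The paper's route is longer, but it showcases the join condition, which is the paper's central tool.
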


\begin{proof}
Let $n_1=\abs{V(G_1)}$ and $n_2=\abs{V(G_2)}$. 
Since $G_2$ is disconnected, $n_2 \geq 2$.
Let $t$~be the toughness of $G$.
Notice that $\tau(G_2)=0$, since $G_2$ is disconnected.
Hence, by \zcref{G2-regular}, $G_2$ is $0$-regular, i.e., edgeless.

If $n_1=1$, then, since $n_2 \geq 2$, we infer that $G\cong
K_{1,n_2}$.
So from now on, we assume that $n_1\ge 2$.
Let $v\in V(G_1)$ be a vertex of maximum degree in $G$ and $u \in V(G_2)$ be an arbitrary vertex. 
First, ${d_G(v)=n_2+d_{G_1}(v)}$, and since $v$ is a vertex of maximum degree in $G$, it must also be a vertex of maximum degree in $G_1$, so ${d_G(v)=n_2+d_{G_1}(v)=n_2 + \Delta(G_1)}$. 
Furthermore, since $G_2$ is edgeless, we have $d_G(u)=n_1$, and the inequality ${d_G(u) \leq d_G(v)}$ is equivalent to
\begin{equation}\label{eq:n1}
n_1 \leq n_2 + \Delta (G_1).
\end{equation}
Furthermore, by \zcref{lem:kuv}, 
$\kappa_G(u,v)=d_G(u)=n_1$. 

Second, since $G_2$ is disconnected, the set $S \coloneqq V(G_1)$ is a separator in $G$ and ${c(G-S)=n_2}$, hence ${t \leq \frac{n_1}{n_2}}$.
Furthermore, $uv$ is a dominating edge in $G$ and so, by \zcref{dom-edge-cond1},
\[n_1 = \kappa_G(u,v) < 2t +1 \leq 2\frac{n_1}{n_2} +1\,.\] 
Since $n_1\ge 2$, rearranging the inequality $n_1 < 2\frac{n_1}{n_2} +1$ yields
\begin{equation}\label{eq:n2}
n_2 < \frac{2n_1}{n_1-1}=2+\frac{2}{n_1-1}\le 4\,.
\end{equation}
Hence $n_2 \in \{2,3\}$.
We consider the two cases separately.
 
\emph{Case 1:} $n_2=3$. 
In this case the first inequality in~\eqref{eq:n2} becomes $3 < 2+ \frac{2}{n_1-1}$, and rearranging it we obtain $n_1<3$. 
Hence $n_1=2$. 
Since, by \zcref{twoUniversalVertices}, $G$ cannot have two universal vertices, we have $G_1 \cong 2K_1$. 
Hence $G \cong K_{2,3}$.

\emph{Case 2:} $n_2=2$. 
In this case inequality~\eqref{eq:n1} becomes $n_1 \leq \Delta(G_1) + 2$ and consequently $n_1-2 \leq \Delta(G_1) \leq n_1-1$. 

If $\Delta(G_1)=n_1-2$, then we have equality in~\eqref{eq:n1}, hence $G_2$ also contains a vertex of maximum degree in $G$. 
As observed in \zcref{G1G2-regular}, this implies that  $G_1$ is regular. 
But then, $\Delta(G_1)=n_1-2$, its complement $\overline{G_1}$ is a $1$-regular graph. 
Hence, $n_1$ must be even and $\overline{G_1} \cong \frac{n_1}{2}K_2$. 
It follows that $G_1 \cong \turan{n_1}{n_1/2}$.
Hence, $G \cong \turan{n_1+2}{n_1/2+1} = \turan{2\ell}{\ell}$, where $\ell\coloneqq\frac{n_1}{2}+1\geq 2$.

On the other hand, if $\Delta(G_1)=n_1-1$, then $v$ is a universal vertex in $G_1$. Furthermore, $v$ is also a universal vertex in $G$ since it is adjacent to both vertices of $G_2$. 
We can thus consider $G$ as the join $G[\{v\}]*(G-v)$. 
Clearly, $G[\{v\}]$ contains a vertex of maximum degree in $G$, hence, by \zcref{G2-regular}, $G-v$ is regular. 
Notice that $d_{G-v}(u)=n_1-1$, since $d_G(u)=n_1$. Hence, $G-v$ is an $(n_1-1)${\dash}regular graph. 
Now, let $H:=(G-v)-V(G_2)=G_1-v$ and let $w \in V(H)$ be  arbitrary. Then $d_H(w)=d_{G-v}(w)-2=n_1-3$, since $N_{G-v}(w)=N_H(w)\cup V(G_2)$ and $\abs{V(G_2)} = 2$.
Hence, $H$ is an $(n_1-3)$-regular graph with $n_1-1$ vertices, so $\overline{H}$ is a $1$-regular graph. 
Consequently, $n_1-1$ must be even and $\overline{H} \cong \frac{n_1-1}{2}K_2$. 
Thus, $H \cong \turan{n_1-1}{(n_1-1)/2}$ and so $G_1 \cong \turan{n_1}{(n_1+1)/2}$. 
Hence, $G\cong \turan{n_1}{(n_1+1)/2}*(2K_1)\cong \turan{n_1+2}{(n_1+3)/2}=\turan{2\ell-1}{\ell}$, where $\ell\coloneqq\frac{n_1+3}{2}\geq 3$.
(Observe that for $\ell = 2$, the graph $\turan{2\ell-1}{\ell} = \turan{3}{2}$ is isomorphic to $K_{1,2}$, which was already considered in the case $n_1 = 1$.)
\end{proof}

\subsection{Graphs with a universal vertex}
\label{subsec:cmp}

We established in \zcref{twoUniversalVertices} that every non-complete minimally tough graph has at most one universal vertex. 
Since every nontrivial graph with a universal vertex can be expressed as a join of two smaller graphs, \zcref{G2-regular} leads to the following characterization of minimally tough graphs with universal vertices.

\begin{theorem} \label{join-with-K1}
Let~${t}$ be a real number, let $G$ be a graph with toughness $t$ such that $G$ has a universal vertex $v$, and let $t'$ be the toughness of the graph $G'\coloneqq G-v$.
Then $G$ is minimally tough if and only if $G'$ is a $\ceil{2t'}$-regular graph and $\ceil{2t}=\ceil{2t'}+1$.
\end{theorem}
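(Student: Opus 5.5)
The plan is to view $G$ as the join $G[\{v\}] * G'$ and use \zcref{G2-regular} for the forward direction and \zcref{mintough} (Condition~\ref{cond1} only) for the backward direction. First the degenerate cases. Since $t$ is a real number, $G$ is non-complete. Then $G'$ is non-complete too, because otherwise $G = K_1 * G'$ would be complete. So $t' = \tau(G')$ is a real number and all the ceilings make sense. If $G'$ is disconnected, then $t' = 0$, and the statement is still covered by the general argument below.

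\emph{Forward direction.} Assume $G$ is minimally $t$-tough. Since $v$ is universal, $d_G(v) = \abs{V(G)} - 1 = \Delta(G)$. Hence $G_1 \coloneqq G[\{v\}]$ contains a vertex of maximum degree in $G = G_1 * G'$. By \zcref{G2-regular} with $G_2 = G'$ and $n_1 = 1$, the graph $G'$ is $\ceil{2t'}$-regular and $\ceil{2t} = \ceil{2t'} + 1$. This is exactly the claim, so this direction is immediate.

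\emph{Backward direction.} Assume $G'$ is $\ceil{2t'}$-regular and $\ceil{2t} = \ceil{2t'} + 1$. I will check that every edge $e = xy$ of $G$ satisfies Condition~\ref{cond1} of \zcref{mintough}, that is, $\kappa_G(x,y) < 2t+1$. This implies $G$ is minimally $t$-tough. Every vertex $u \in V(G')$ has degree $d_G(u) = d_{G'}(u) + 1 = \ceil{2t'} + 1 = \ceil{2t}$, and $d_G(v) \geq d_G(u)$. For an edge $vu$ with $u \in V(G')$, \zcref{conn-ivdp} (or \zcref{lem:kuv}) gives $\kappa_G(u,v) \leq d_G(u) = \ceil{2t}$. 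For an edge $uw$ with $u, w \in V(G')$, \zcref{conn-ivdp} likewise gives $\kappa_G(u,w) \leq \min\{d_G(u), d_G(w)\} = \ceil{2t}$. In both cases we get $\kappa_G(x,y) \leq \ceil{2t} < 2t + 1$, so Condition~\ref{cond1} holds for every edge.

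The only delicate point is the bookkeeping around non-completeness and finiteness of $t$ and $t'$. The actual work is already done by \zcref{G2-regular}. The backward direction reduces to the observation that every vertex of $G'$ has degree exactly $\ceil{2t}$ in $G$, and every edge of $G$ has at least one endpoint in $G'$.
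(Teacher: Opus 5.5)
Your proof is correct and follows the paper's argument: the forward direction applies the join condition to $G[\{v\}] * G'$, and the backward direction checks Condition~(1) of the minimal-toughness criterion for every edge. The only difference is that for edges $uw$ inside $G'$ you bound $\kappa_G(u,w)$ directly by $\min\{d_G(u),d_G(w)\} = \lceil 2t\rceil$, whereas the paper first argues $\kappa_G(u,w)=\kappa_{G'}(u,w)+1$ and then bounds $\kappa_{G'}(u,w)$ by degrees in $G'$; your version is slightly more direct because it avoids that extra claim.
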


\begin{proof}
Note that $G$ is the join of the graphs $G[\{v\}]$ and $G'$.

Suppose first that $G$ is minimally tough.
Since $v$ is a vertex of maximum degree in $G$, by \zcref{G2-regular}, $G'$ is $\ceil{2t'}$-regular and $\ceil{2t}=\ceil{2t'}+1$.

Conversely, assume $G'$ is a $\ceil{2t'}$-regular graph and $\ceil{2t}=\ceil{2t'}+1$. 
We show that $G$ is minimally $t$-tough by verifying that Condition~\ref{cond1} of \zcref{mintough} is satisfied for every edge of $G$.
First, we consider edges with both endpoints in $V(G')$. 
So let $uw \in E(G')$ be an arbitrary edge. 
Then $\kappa_{G}(u,w)=\kappa_{G'}(u,w)+1$, since a maximum collection of internally vertex-disjoint $u$-$w$ paths in $G'$, augmented by the path $(u,v,w)$, yields a maximum collection of internally vertex-disjoint $u$-$w$ paths in $G$. 
Hence, by \zcref{conn-ivdp}, \[\kappa_{G}(u,w)=\kappa_{G'}(u,w)+1 \leq \min\{d_{G'}(u),d_{G'}(w)\}+1=\ceil{2t'}+1=\ceil{2t}\,,\] so in this case Condition~\ref{cond1} of \zcref{mintough} holds. 
Secondly, we consider edges incident with $v$. 
So let $u \in V(G')$ be arbitrary. 
Then for the edge $uv$ we have $\kappa_{G}(u,v)= d_{G'}(u)+1=\ceil{2t'}+1=\ceil{2t}$, and so Condition~\ref{cond1} of \zcref{mintough} is met also for this type of edges.
Hence, by \zcref{mintough}, $G$ is minimally $t$-tough.
\end{proof}

Dallard et al.~showed in \cite{Dall23} that for a rational number $t\leq 1$ the only minimally $t$-tough graphs with universal vertices are the stars $K_{1,\ell}$ for $\ell\geq 2$ (see \cite[Theorem~4.6]{Dall23}).
Note that the toughness of $K_{1,\ell}$ is $1/\ell$ (see~\zcref{toughness-complete-bipartite}).
Dallard et al.~also observed that wheel graphs are examples of minimally $t$-tough graphs with a universal vertex for $t>1$. 
For an integer $\ell\ge 5$, the \emph{wheel} $W_{\ell}$ is the join of $K_1$ and cycle $C_{\ell-1}$.
Dallard et al.~\cite[Proposition~4.7]{Dall23} proved that for all $\ell \ge 5$, the wheel graph $W_{\ell}$ is a minimally tough graph, with toughness equal to $1 + 2/(\ell-1)$ if $\ell$ is odd, and $1 + 2/\ell$ if $\ell$ is even.
 
Using \zcref{join-with-K1} we extend their characterization of minimally $t$-tough graphs with universal vertices for $t\leq 1$ to $t\leq 3/2$, by showing in addition that the wheel graphs are the only minimally $t$-tough graphs with universal vertices for $t\in (1,\frac{3}{2}]$.

\begin{proposition}\label{UniversalVertex-characterization} 
Let~$t \in (0, 3/2]$ and let $G$ be a minimally $t$-tough graph with a universal vertex. 
Then one of the following holds:
\begin{enumerate}[label=\rm{(\arabic*)}]
\item\label{UniversalVertex0} $t\le 1/2$ and $G \cong K_{1,\ell}$ for some $\ell \geq 2$, or
\item\label{universal vertex2} $t>1$ and $G \cong W_{\ell}$ for some $\ell \geq 5$.
\end{enumerate}
\end{proposition}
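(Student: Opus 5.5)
We will apply \zcref{join-with-K1}. Let $v$ be the universal vertex, $G'=G-v$, $t'=\tau(G')$, and $n'=\abs{V(G')}$. Since $G$ is minimally $t$-tough with $t$ finite and positive, $G$ is non-complete, so $G'$ is non-complete. By \zcref{join-with-K1}, $G'$ is $\ceil{2t'}$-regular and $\ceil{2t}=\ceil{2t'}+1$. As $t\in(0,3/2]$, we have $\ceil{2t}\in\{1,2,3\}$, hence $\ceil{2t'}\in\{0,1,2\}$. We then split into cases according to the value of $\ceil{2t'}$.

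\emph{Case $\ceil{2t'}=0$.} Then $t'=0$ and $G'$ is $0$-regular, i.e., edgeless, with $n'\ge 2$ vertices (as $G$ is non-complete). So $G\cong K_{1,\ell}$ with $\ell=n'\ge2$, and $t=1/\ell\le 1/2$ by \zcref{toughness-complete-bipartite}. This gives outcome~\ref{UniversalVertex0}.

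\emph{Case $\ceil{2t'}=1$.} Then $G'$ is $1$-regular, so $G'\cong mK_2$. If $m\ge2$, $G'$ is disconnected and $t'=0$, contradicting $\ceil{2t'}=1$. If $m=1$, then $G'\cong K_2$ is complete, contradicting non-completeness. So this case is impossible.

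\emph{Case $\ceil{2t'}=2$.} Then $G'$ is $2$-regular, i.e., a disjoint union of cycles. Since $t'>1/2>0$, $G'$ is connected, so $G'\cong C_{\ell-1}$. Non-completeness rules out $C_3$, so $\ell-1\ge4$ and $G\cong W_\ell$ with $\ell\ge5$. It remains to check $t>1$. We can use the cited value $\tau(W_\ell)\in\{1+2/(\ell-1),\,1+2/\ell\}$ from \cite[Proposition~4.7]{Dall23}, which exceeds $1$. Alternatively, $\ceil{2t}=3$ already gives $t>1$. This gives outcome~\ref{universal vertex2}.

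The only mildly delicate point is the middle case: we must exclude a $1$-regular $G'$, which requires remembering that $t'$ is the toughness of $G'$ itself, so disconnectedness forces $t'=0$. Everything else is a direct application of \zcref{join-with-K1}.
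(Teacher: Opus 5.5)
Your proposal is correct and follows essentially the same route as the paper: apply \zcref{join-with-K1} to $G'=G-v$, then identify or rule out the $0$-, $1$-, and $2$-regular possibilities for $G'$ using non-completeness and the fact that $t'=0$ exactly when $G'$ is disconnected. The only cosmetic difference is that you split cases on $\lceil 2t'\rceil$ rather than on the intervals $(0,1/2]$, $(1/2,1]$, $(1,3/2]$ for $t$; this is equivalent via $\lceil 2t\rceil=\lceil 2t'\rceil+1$.
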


\begin{proof}
Let $v$ be a universal vertex in $G$. Let $G'=G-v$ and let $t'$ be the toughness of $G'$. 
By \zcref{join-with-K1}, $G'$ is $\ceil{2t'}$-regular and $\ceil{2t'}=\ceil{2t}-1$. 
We consider three cases.

Suppose first that $t \in (0, 1/2]$. 
Then $\ceil{2t'}=\ceil{2t}-1=0$, hence $G'$ is $0$-regular, so $G' \cong \ell K_1$ for some $\ell \geq 2$, implying that $G \cong K_{1, \ell}$.

Suppose now that $t \in (1/2,1]$. 
Then $\ceil{2t'}=\ceil{2t}-1=1$, hence $G'$ is $1$-regular. 
So $G' \cong m K_2$ for $m \geq 2$ ($m\neq 1$, since $G'$ is not complete). 
But then $G'$ is disconnected, hence $t'=0$, a contradiction with $\ceil{2t'}=1$. So for $t \in (1/2,1]$, there are no minimally $t$-tough graphs with a universal vertex.

Assume $t \in (1, 3/2]$. 
Then $\ceil{2t'}=\ceil{2t}-1=2$, hence $G'$ is $2$-regular, and since $t'>0$, $G'$ is connected. Furthermore, $G'$ is not complete. 
Hence, $G' \cong C_m$ for some $m \geq 4$ and consequently $G \cong W_{m+1}$.
\end{proof}

Finally, we use \zcref{join-with-K1} to give a new proof of the following result from~\cite{Dall23}.
\begin{theorem}[Dallard et al.~{\cite[Theorem~4.8]{Dall23}}]
For every real number $t > 1$, there exists no minimally $t$-tough chordal graph with a universal vertex.
\end{theorem}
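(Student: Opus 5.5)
We argue by contradiction. Suppose $G$ is a minimally $t$-tough chordal graph with a universal vertex $v$, where $t>1$. Let $G'\coloneqq G-v$ and let $t'$ be the toughness of $G'$.

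By \zcref{join-with-K1}, $G'$ is $\ceil{2t'}$-regular and $\ceil{2t'}=\ceil{2t}-1$. Since $t>1$, we have $\ceil{2t}\ge 3$, hence $\ceil{2t'}\ge 2$. In particular $t'>0$, so $G'$ is connected. Moreover, $G'$ is $d$-regular with $d\coloneqq\ceil{2t'}\ge 2$.

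The class of chordal graphs is hereditary, so $G'=G-v$ is a connected chordal graph. By \zcref{regular-chordal}, a connected $d$-regular chordal graph with $d\ge 1$ is isomorphic to $K_{d+1}$, so $G'$ is complete. Then $v$ and every vertex of $G'$ are universal in $G$, making $G$ complete, which contradicts $\tau(G)=t<\infty$. (Alternatively, this contradicts \zcref{twoUniversalVertices} directly.) One could also argue via $\tau(G')=\infty$, but the toughness $t'$ was needed as a finite value in \zcref{join-with-K1}, which already presupposes that $G'$ is non-complete; either route gives the contradiction.

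The only delicate step is the first one: ensuring that $G'$ is connected, so that \zcref{regular-chordal} applies. This follows because $\ceil{2t'}\ge 2>0$ forces $t'>0$, and a graph has positive toughness if and only if it is connected. The rest is routine bookkeeping.
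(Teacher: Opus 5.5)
Your proof is correct and is essentially the paper's own argument: the universal-vertex characterization gives that $G-v$ is $\lceil 2t'\rceil$-regular with $\lceil 2t'\rceil=\lceil 2t\rceil-1\ge 2$, hence $t'>0$ and $G-v$ is connected. Since $G-v$ is chordal, the lemma that connected regular chordal graphs are complete makes $G-v$, and therefore $G$, complete, contradicting $t<\infty$; your aside about $\tau(G')=\infty$ is unnecessary but harmless.
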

\begin{proof}
Let ${t>1}$ be a real number and suppose that there exists a minimally $t$-tough chordal graph~$G$ with a universal vertex~$v$.
Let ${G'= G-v}$ and let~$t'$ be the toughness of~$G'$. 
By \zcref{join-with-K1}, $G'$ is $\ceil{2t'}$-regular and ${\ceil{2t'}=\ceil{2t}-1}$. 
In particular, since~${t>1}$, we get $\ceil{2t'}\geq 2$, so $G'$ is a connected $d$-regular graph, for some $d\geq 2$. 
Next, observe that, since $G$ is chordal, so is $G'$.
By \zcref{regular-chordal}, $G' \cong K_{d+1}$, hence, $G\cong K_{d+2}$.
Therefore, $t=\infty$, a contradiction.
\end{proof}

\section{%
\texorpdfstring{$P_4$-free graphs}{P\_4-free graphs}}
\label{sec:P4free}
In this section, we characterize minimally tough $P_4$-free graphs. 
To do so, we need the following theorem describing the structure of connected $P_4$-free graphs.

\begin{theorem}[Corneil et al.~\cite{Corn81}, Gurvich~\cite{Gurv77, Gurv84},  Sumner~\cite{Sumn71}] \label{P4freechar}
Let $G$ be a connected $P_4$-free graph with $\abs{V(G)} \geq 2$. 
Then, there exists a unique partition of $V(G)$ into a maximum number of $k\geq 2$ parts $V_1,\ldots, V_k$, such that the following conditions are satisfied.
\begin{enumerate}[label=\rm{(\arabic*)}]

\item\label{P4c3} For every $i\in\{1,\ldots, k\}$, either $G[V_i] \cong K_1$ or $G[V_i]$ is disconnected. 
\item\label{P4c2} For every $u \in V_i$ and  $v \in V_j$ with $i \neq j$, we have $\{u,v\} \in E(G)$.
\end{enumerate}
\end{theorem}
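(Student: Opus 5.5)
The plan is to reduce everything to the classical fact (Seinsche) that \emph{for every $P_4$-free graph $H$ with at least two vertices, $H$ or $\overline{H}$ is disconnected}. With this fact, the partition is obtained from the components of $\overline{G}$.

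\textbf{Step 1 (the key fact).} I would prove it by induction on $\abs{V(H)}$; two-vertex graphs are clear. Since $P_4$ is self-complementary, the class of $P_4$-free graphs is closed under complementation, so the roles of $H$ and $\overline{H}$ are symmetric. Suppose for a contradiction that both $H$ and $\overline{H}$ are connected, and pick a vertex $v$. By induction, $H-v$ or its complement is disconnected. By the symmetry, we may assume $H-v$ is disconnected, with components $C_1,\ldots,C_m$ and $m\ge 2$.
\begin{itemize}
\item Since $H$ is connected, $v$ has a neighbour in every $C_i$.
\item If $v$ were adjacent to all of $V(H)\setminus\{v\}$, then $v$ would be isolated in $\overline{H}$, contradicting connectivity of $\overline{H}$. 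So $v$ has a non-neighbour $y$, say in $C_j$.
\item Walking in $C_j$ from a neighbour of $v$ to $y$, we find adjacent vertices $a,b\in C_j$ with $va\in E(H)$ and $vb\notin E(H)$.
\item Take a neighbour $c$ of $v$ in some $C_i$ with $i\neq j$.
\end{itemize}
Then $b,a,v,c$ induce a $P_4$: the only edges among them are $ba$, $av$ and $vc$, since $b$ and $a$ lie in a different component of $H-v$ than $c$. This is a contradiction. I expect this step to be the main obstacle; the rest is bookkeeping.

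\textbf{Step 2 (existence).} Let $G$ be connected, $P_4$-free, with $\abs{V(G)}\ge 2$. By Step 1, $\overline{G}$ is disconnected. Let $V_1,\ldots,V_k$, with $k\ge 2$, be the vertex sets of the components of $\overline{G}$. Condition~\ref{P4c2} holds because there are no $\overline{G}$-edges between distinct components, so the corresponding pairs are edges of $G$. For condition~\ref{P4c3}, suppose $\abs{V_i}\ge 2$. Then $G[V_i]$ is $P_4$-free, and its complement $\overline{G}[V_i]$ is connected. So Step 1 forces $G[V_i]$ to be disconnected.

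\textbf{Step 3 (maximality and uniqueness).} Take any partition $W_1,\ldots,W_r$ of $V(G)$ satisfying~\ref{P4c2}. Every edge of $\overline{G}$ lies inside a single part, so each component of $\overline{G}$ is contained in one part $W_j$. Hence every such partition is a coarsening of $\{V_1,\ldots,V_k\}$, which gives $r\le k$. Moreover, $r=k$ holds only if the partition coincides with $\{V_1,\ldots,V_k\}$. Therefore the partition from Step 2 is the unique partition with the maximum number of parts, and by Step 2 it also satisfies~\ref{P4c3}.
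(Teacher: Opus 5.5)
Your proof is correct and complete. The paper gives no proof of this statement: it is quoted from Corneil et al., Gurvich, and Sumner and used as a black box, so there is no internal argument to compare against.

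Your route is the standard textbook proof of the cograph decomposition. You first prove Seinsche's lemma (a $P_4$-free graph on at least two vertices is disconnected or has disconnected complement) by induction, producing the induced path $b,a,v,c$. You then take the vertex sets of the components of $\overline{G}$ as the parts.

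What your version adds over the bare citation is the explicit identification of $V_1,\dots,V_k$ with the components of $\overline{G}$. That is exactly the form in which the paper later uses the theorem. For example, in the $P_4$-free classification it equates the $V_i$ with the parts of a complete multipartite graph, and its lemma on separators is phrased in terms of the components of $\overline{G}$.

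One small sharpening of your Step~3: the qualifier ``maximum number of parts'' is actually redundant. Any proper coarsening of the component partition has a part that is a union of at least two components of $\overline{G}$. In $G$ such a part induces a join of two nonempty graphs, hence a connected graph on at least two vertices, which violates condition~(1). So the component partition is the only partition satisfying both (1) and (2).
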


We can now prove \zcref{P4free:mintough}; in fact, we prove the following strengthening of it.

\begin{theorem}\label{P4free:mintough-new}
Let $G$ be a $P_4$-free graph.
Then, the following conditions are equivalent. 
\begin{enumerate}
    \item\label{item:P4free:1} $G$ is non-trivially minimally tough. 
    \item\label{item:P4free:2} $G$ is isomorphic to one of~$K_{1,\ell}$, $K_{2,3}$, $\turan{2\ell}{\ell}$, or $\turan{2\ell-1}{\ell}$ for some~${\ell \geq 2}$. 
    \item\label{item:P4free:3} $G$ is an $n$-vertex complete multipartite graph $K_{n_1, \dots, n_k}$ with $k\geq2$ parts such that ${n_1 \leq \dots \leq n_k}$ and~${n - n_2 < \frac{2n}{n_k}}-1$. 
\end{enumerate}
\end{theorem}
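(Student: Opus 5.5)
I would prove the cycle of implications (1)$\Rightarrow$(2)$\Rightarrow$(3)$\Rightarrow$(1).

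\emph{(1)$\Rightarrow$(2).} Let $G$ be a non-trivially minimally tough $P_4$-free graph. Then $G$ is connected and non-complete, so $\abs{V(G)}\ge 3$. By \zcref{P4freechar}, $V(G)$ splits into $k\ge 2$ parts $V_1,\dots,V_k$, each inducing $K_1$ or a disconnected graph, with all edges present between distinct parts. Pick a vertex $v$ of maximum degree in $G$, say $v\in V_1$, and write $G=G[V_1]*G_2$ with $G_2=G-V_1$. By \zcref{twoUniversalVertices}, at most one part is a singleton, and since $G$ is not complete, some part induces a disconnected graph.

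If $k\ge 3$, or if $V_1$ is a singleton, some other part $V_j$ ($j\neq 1$) induces a disconnected graph. I then rewrite $G=G_1'*G[V_j]$ with $G_1'=G-V_j\ni v$, and \zcref{MinTough-Join-Disconnected} applies directly. If $k=2$ and both parts are disconnected, then $G_2=G[V_2]$ is disconnected and the same lemma applies. The case where $k=2$ and $V_2$ is a singleton means $V_2$ is a universal vertex; then that vertex has maximum degree, so we swap roles and use $V_1$, which is disconnected. In every case \zcref{MinTough-Join-Disconnected} gives the listed graphs. Since those graphs are all complete multipartite, we never need to determine whether the inner parts are edgeless.

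\emph{(2)$\Rightarrow$(3).} This is a direct check using $\tau=n/n_k-1$.
\begin{itemize}
\item For $K_{1,\ell}$: $n-n_2=1<2(\ell+1)/\ell-1$.
\item For $K_{2,3}$: $2<10/3-1$.
\item For $\turan{2\ell}{\ell}$: $2\ell-2<2\ell-1$.
\item For $\turan{2\ell-1}{\ell}$ with $\ell\ge3$ (here $n_2=2$): $2\ell-3<2\ell-2$.
\end{itemize}

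\emph{(3)$\Rightarrow$(1).} The condition forces $n_k\ge 2$, since otherwise $n-n_2=n-1\not<2n-1$ fails only trivially. So $G$ is non-complete and, by \zcref{prop3}, $t=n/n_k-1$ and $2t+1=2n/n_k-1$. Every edge of $G$ joins two distinct parts, and I verify Condition~\ref{cond1} of \zcref{mintough} for each edge $uv$. By \zcref{lem:kuv}, $\kappa_G(u,v)=\min\{d(u),d(v)\}=n-\max(n_i,n_j)$, which is at most $n-n_2$ because at most one of the two parts is $V_1$. Hence $\kappa_G(u,v)\le n-n_2<2t+1$, and $G$ is minimally tough with finite positive toughness.

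The main obstacle is the case bookkeeping in (1)$\Rightarrow$(2): arranging each case so that the side containing the maximum-degree vertex is joined to a \emph{disconnected} side, as \zcref{MinTough-Join-Disconnected} requires.
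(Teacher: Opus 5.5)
\textbf{Gap in (3)$\Rightarrow$(1).} Your opening sentence there is false: condition (3) does \emph{not} force $n_k\ge 2$. If $n_k=1$, then $G\cong K_n$ with $n=k\ge 2$ and $n_2=1$. In that case $n-n_2=n-1<2n-1=\frac{2n}{n_k}-1$, so (3) holds; your ``$n-1\not<2n-1$'' is simply wrong. Such a $G$ is complete, so it is only trivially minimally tough, and it is not in the list of (2); already $K_2=K_{1,1}$ is a counterexample.

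So (3)$\Rightarrow$(1) is false as literally stated, and no argument can establish this step. The right move is to point out that (3) must also require $n_k\ge 2$, i.e.\ that $G$ is not complete. The paper's proof has the same hole: it asserts ``Since $G$ is not complete, $n_k>1$'', but this does not follow from (3).

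\textbf{The rest matches the paper.} Once $n_k\ge 2$ is added, your argument for (3)$\Rightarrow$(1) is correct and is essentially the paper's. By \zcref{prop3}, $t=\frac{n}{n_k}-1>0$. For an edge between $V_i$ and $V_j$, \zcref{lem:kuv} gives $\kappa_G(u,v)=n-\max\{n_i,n_j\}\le n-n_2<2t+1$. Hence Condition~\ref{cond1} of \zcref{mintough} holds for every edge.

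Your (1)$\Rightarrow$(2) is also correct and uses the same tools as the paper: \zcref{P4freechar}, \zcref{twoUniversalVertices} and \zcref{MinTough-Join-Disconnected}. The paper avoids your case split with one observation. If $v\in V_i$ has maximum degree, a singleton part $V_j$ with $j\neq i$ would be a universal vertex, which would force $v$ to be universal too. So every $V_j$ with $j\neq i$ is disconnected, and in particular your last case ($k=2$ with $V_2$ a singleton) cannot occur.
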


\begin{proof}
We may assume that $G$ is connected and has at least two vertices, since otherwise neither of the three conditions holds.
By \zcref{P4freechar} there is a unique partition of $V(G)$ into maximum number $k\geq 2$ of parts $V_1,\ldots, V_k$ satisfying Conditions \ref{P4c3} and \ref{P4c2} of that theorem. 

Let us show that \ref{item:P4free:1} implies \ref{item:P4free:2}.
Assume that $G$ is non-trivially minimally tough. 
Let $u$ be a vertex of maximum degree in $G$, let $i\in [k]$ be the index of the part containing $u$ (that is, $u\in V_i$), and fix an arbitrary $j\in [k]\setminus \{i\}$.
Then, $G[V_j]$ is disconnected, since otherwise the unique vertex in $V_j$ would be a universal vertex in $G$, implying by the choice of $u$ that $u$ is also universal, contradicting \zcref{twoUniversalVertices}.
Observe that $G$ is the join of graphs $G_1\coloneq G-V_j$ and $G_2 \coloneq G[V_j]$.
Since $u\in V(G_1)$ and $G_2$ is disconnected, \zcref{MinTough-Join-Disconnected} implies that $G$ is isomorphic to one of
$K_{1,\ell}$, 
$K_{2,3}$,
$\turan{2\ell}{\ell}$, or
$\turan{2\ell-1}{\ell}$ for some~${\ell \geq 2}$.

The fact that \ref{item:P4free:2} implies \ref{item:P4free:3} follows from a simple calculation that is left to the reader. 

To see that \ref{item:P4free:3} implies \ref{item:P4free:1}, assume that $G$ satisfies the conditions of \ref{item:P4free:3}.
Let~$t$ be the toughness of~$G$.
Since $G$ is not complete, $n_k>1$; hence, by \zcref{prop3}, ${t=\frac{n}{n_k}-1}$. 
Observe that every edge in a complete multipartite graph is dominating. 
We prove that $G$ is minimally tough by applying \zcref{mintough}.
By \zcref{cor:dom2}, the second condition of \zcref{mintough} is never met, hence, it suffices to show that~${\kappa_G(u,v) < 2t + 1}$ for every edge~${uv \in E(G)}$. 

Let $uv \in E(G)$ be an arbitrary edge, and let $V_i$ and $V_j$, with $1\leq i<j\leq k$, be the parts that contain $u$ and $v$, respectively. 
Observe that $G$ is the join of graphs $G_1\coloneq G[V_i]$ and $G_2 \coloneq G-V_i$. 
Furthermore, since the parts of the complete multipartite graph $G$ are exactly the sets $V_1,\ldots, V_k$, we infer that ${d_G(u) = n-n_i\ge n-n_j = d_G(v)}$.
Since  $u \in V(G_1)$ and $v \in V(G_2)$, by \zcref{lem:kuv}, $\kappa_G(u,v) = n-n_j$.
Since, by assumption, $n_2 \leq n_j$ and ${n - n_2 < \frac{2n}{n_k}-1}$, it follows that $\kappa_G(u,v)\leq n-n_2< \frac{2n}{n_k} - 1=2t+1$, as required.
\end{proof}

Recall that the generalized Kriesell's conjecture states that for every positive real number $t$, every minimally $t$-tough graph has a vertex of degree $\ceil{2t}$.
It follows from \zcref{P4free:mintough-new} that the generalized Kriesell's conjecture holds for $P_4$-free graphs.

\begin{corollary}\label{cor:Kriesell-P4-free}
    Let $t$ be a positive real number and let $G$ be a minimally $t$-tough $P_4$-free graph.
    Then $G$ has a vertex of degree $\ceil{2t}$.
\end{corollary}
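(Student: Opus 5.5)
By \zcref{P4free:mintough-new}, a minimally $t$-tough $P_4$-free graph $G$ with $t>0$ is non-trivially minimally tough. It is therefore isomorphic to one of $K_{1,\ell}$, $K_{2,3}$, $\turan{2\ell}{\ell}$, or $\turan{2\ell-1}{\ell}$ for some $\ell\geq 2$. I plan to go through these four families one at a time. In each case I compute $t$ using \zcref{prop3} and then exhibit a vertex whose degree equals $\ceil{2t}$.

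\emph{Stars and $K_{2,3}$.} For $K_{1,\ell}$ we have $n=\ell+1$ and largest part $n_k=\ell$, so $t=\frac{1}{\ell}$ and $\ceil{2t}=\ceil{2/\ell}=1$ because $\ell\geq 2$. Every leaf has degree~$1$. For $K_{2,3}$ we have $t=\frac{5}{3}-1=\frac23$, so $\ceil{2t}=\ceil{4/3}=2$. Each vertex in the part of size~$3$ has degree~$2$.

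\emph{Tur\'an graphs.} In $\turan{2\ell}{\ell}$ every part has size $2$, so $t=\frac{2\ell}{2}-1=\ell-1$ and $\ceil{2t}=2\ell-2$. Every vertex has degree $2\ell-2$. In $\turan{2\ell-1}{\ell}$ the largest part has size $2$, so $t=\frac{2\ell-1}{2}-1=\ell-\frac32$ and $\ceil{2t}=2\ell-3$. Any vertex in a part of size~$2$ has degree $(2\ell-1)-2=2\ell-3$. When $\ell=2$ the graph is $K_{1,2}$, and an endpoint of the path has degree $1=2\ell-3$, so the formula still holds.

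There is no real obstacle here. The argument is pure bookkeeping once \zcref{P4free:mintough-new} and \zcref{prop3} are available. The one step needing care is verifying the hypothesis $n_k>1$ of \zcref{prop3} in each case, which holds for all four families.
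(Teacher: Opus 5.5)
Your proof is correct and follows the paper's argument. Both reduce via \zcref{P4free:mintough-new} to the four families $K_{1,\ell}$, $K_{2,3}$, $\turan{2\ell}{\ell}$, $\turan{2\ell-1}{\ell}$ and check each case; the paper phrases the check as $\delta(G)=\ceil{2t}$ and cites Chv\'atal's formula (\zcref{toughness-complete-bipartite}) for the two bipartite cases, whereas you use \zcref{prop3} throughout, which is equally valid.
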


\begin{proof}
    Since $t$ is a positive real number, $G$ is non-trivially minimally tough. Hence, by \zcref{P4free:mintough-new}, $G$ is isomorphic to one of
    $K_{1,\ell}$, 
    $K_{2,3}$,
    $\turan{2\ell}{\ell}$, or
    $\turan{2\ell-1}{\ell}$ for some~${\ell \geq 2}$. 
    We now show that in each of the above cases $\delta(G) = \ceil{2t}$.
    
    If $G\cong K_{1,\ell}$, for some $\ell\geq 2$, then, by \zcref{toughness-complete-bipartite}, $t=\frac{1}{\ell}$. Clearly, $\delta(K_{1,\ell})=1$ and, hence, ${\delta(G) = 1 = \ceil{2t}}$.
    If $G\cong K_{2,3}$, then, by \zcref{toughness-complete-bipartite}, ${t=\frac{2}{3}}$. 
    Since $\delta(K_{2,3})=2$, we have ${\delta(G)=2=\ceil{2t}}$. 
    Lastly, if $G$ is isomorphic to $\turan{2\ell}{\ell}$, or
    $\turan{2\ell-1}{\ell}$ for some~${\ell \geq 2}$, then, $G$ is a complete multipartite graph whose largest part has size~$2$, hence, by \zcref{prop3}, ${t=\frac{n}{2}-1}$, where~$n$ is the order of~$G$. 
    Furthermore, it is easy to see that ${\delta(G)=n-2}$. 
    Hence, ${\delta(G)=n-2 = \ceil{2t}}$.
\end{proof}

\section{Co-chordal graphs}
\label{sec:cochordal}
In this section we characterize minimally tough co-chordal graphs with co-diameter at least $3$. 
We start with the case when the co-diameter is infinite, that is, when the complement is disconnected.

\begin{lemma}\label{characterization-co-chordal-disconnected}
    A co-chordal graph with disconnected complement is non-trivially \linebreak minimally tough if and only if it is isomorphic to one of $K_{1,\ell}$, $K_{2,3}$, $\turan{2\ell}{\ell}$, or $\turan{2\ell-1}{\ell}$ for some~${\ell \geq 2}$.
\end{lemma}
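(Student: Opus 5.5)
The plan is to reduce to \zcref{MinTough-Join-Disconnected}, using that the complement is chordal.

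For the backward direction, all listed graphs are complete multipartite, hence $P_4$-free. By \zcref{P4free:mintough-new} they are non-trivially minimally tough. They are also co-chordal, since their complements are disjoint unions of cliques and so chordal. Their complements are disconnected because each has at least two parts. This direction is immediate.

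For the forward direction, let $G$ be non-trivially minimally tough and co-chordal, with $\Gc$ disconnected. Let $V_1,\dots,V_k$ ($k\ge 2$) be the vertex sets of the components of $\Gc$, and let $u$ be a vertex of maximum degree in $G$, say $u\in V_i$. Fix some $j\neq i$ and write $G=G_1*G_2$ with $G_1\coloneqq G-V_j$ and $G_2\coloneqq G[V_j]$. Then $u\in V(G_1)$.

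By \zcref{G2-regular}, $G_2$ is $\lceil 2t_2\rceil$-regular, where $t_2=\tau(G_2)$. The key point is to show that $G_2$ is disconnected; \zcref{MinTough-Join-Disconnected} then yields the claim.

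\begin{itemize}
\item If $\abs{V_j}=1$, the vertex of $V_j$ is universal in $G$. Since $u$ has maximum degree, $u$ is also universal, contradicting \zcref{twoUniversalVertices} (note $G$ is non-complete). So $\abs{V_j}\ge 2$.
\item Now $\overline{G_2}=\Gc[V_j]$ is a connected chordal graph on at least two vertices. Since $G_2$ is $d$-regular on $m=\abs{V_j}$ vertices, $\overline{G_2}$ is $(m-1-d)$-regular.
\item If $m-1-d\ge 1$, then \zcref{regular-chordal} gives $\overline{G_2}\cong K_{m}$. Hence $G_2$ is edgeless, and therefore disconnected since $m\ge 2$.
\item If $m-1-d=0$, then $\overline{G_2}$ is edgeless, which contradicts connectivity with $m\ge2$.
\end{itemize}

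In every case $G_2$ is disconnected, and applying \zcref{MinTough-Join-Disconnected} finishes the proof. The main obstacle is the one just handled: ruling out a connected regular $G_2$. It is resolved by combining \zcref{regular-chordal}, applied to the complement $\overline{G_2}$, with \zcref{twoUniversalVertices}.
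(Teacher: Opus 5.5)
Your proposal is correct and follows essentially the same route as the paper. Like the paper, you take a component of $\overline{G}$ avoiding a maximum-degree vertex and get regularity of the corresponding factor from \zcref{G2-regular}. You then rule out a singleton component via \zcref{twoUniversalVertices}, force the component to be a clique via \zcref{regular-chordal} so the factor is edgeless, and finish with \zcref{MinTough-Join-Disconnected}; the converse likewise comes from \zcref{P4free:mintough-new}. Your separate treatment of the degree-$0$ case, which \zcref{regular-chordal} excludes, fills in a detail the paper leaves implicit.
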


\begin{proof}
Let $G$ be a co-chordal graph with disconnected complement.
Let $v \in V(G)$ be a vertex of maximum degree in $G$ and let $Q$ be a connected component of $\Gc$ that does not contain $v$.
Then $G=\overline{Q}*(G-V(Q))$. 
Assume $G$ is non-trivially minimally tough. 
Then, by \zcref{G2-regular}, $\overline{Q}$ is regular.
Furthermore, by \zcref{twoUniversalVertices}, $G$ contains at most one universal vertex and hence, ${\abs{V(Q)} \geq 2}$.
In particular, $Q$ is a regular chordal graph with at least two vertices.
By \zcref{regular-chordal}, $Q$ is complete, implying that the graph~$\overline{Q}$ is edgeless and, hence, disconnected.
So the result follows from \zcref{MinTough-Join-Disconnected}. 

The converse follows from \zcref{P4free:mintough-new}. 
\end{proof} 

Recall by \zcref{SimplVert-MaxDistance} that in every chordal graph with positive (and finite) diameter~$d$ there exist two simplicial vertices at distance $d$. 
In the next lemma we consider the case $d\ge 3$ and determine the local connectivity of such a pair of vertices in the complement of the graph. 

\begin{lemma}\label{cochordalkappa}
    \sloppypar
    Let~$G$ be a co-chordal graph with co-diameter~$d \geq 3$, let~${u,w}$ be simplicial vertices in~$\Gc$ at distance~$d$ in~$\Gc$, let ${U \coloneqq N_{\Gc}(u)}$, let ${W \coloneqq N_{\Gc}(w)}$, let ${X \coloneqq V(G)  \setminus (U \cup W \cup \{u,w\})}$, and let~$m$ be the maximum size of a matching in~${G[U \cup W]}$. 
    Then ${\kappa_G(u,w) = \abs{X} + m+1}$. 
\end{lemma}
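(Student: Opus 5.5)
The plan is to split the $u$-$w$ paths in $G$ into three types: the edge $uw$, the paths through a common neighbour in $X$, and the paths that go through $U\cup W$. The last type will then be counted with \zcref{matching-ivdp}, in the same way as in the proof of \zcref{lem:kuv}.

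First I would record the structure of $G$ around $u$ and $w$.
\begin{itemize}
    \item Since $u$ is simplicial in $\Gc$, the set $U$ is a clique in $\Gc$, so $U$ is independent in $G$. Similarly, $W$ is independent in $G$.
    \item Since $d_{\Gc}(u,w)\ge 3$, we have $uw\in E(G)$, $U\cap W=\emptyset$, $w\notin U$ and $u\notin W$. Hence $V(G)$ is the disjoint union of $\{u,w\}$, $U$, $W$ and $X$.
    \item In $G$ we get $N_G(u)=W\cup X\cup\{w\}$ and $N_G(w)=U\cup X\cup\{u\}$. So the common neighbours of $u$ and $w$ in $G$ are exactly the vertices of $X$.
\end{itemize}
In particular $G[U\cup W]$ is bipartite with parts $W$ and $U$. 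The graph $H\coloneqq G-X-uw$ is precisely the $(u,w)$-extension of $(G[U\cup W],W,U)$, because $u$ is adjacent in $H$ to all of $W$ and to nothing else, and $w$ is adjacent to all of $U$ and to nothing else.

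Next I would reduce $\kappa_G(u,w)$ to $\kappa_H(u,w)$ by two standard Menger-type facts.
\begin{enumerate}
    \item $\kappa_G(u,w)=1+\kappa_{G-uw}(u,w)$. The path $(u,w)$ is disjoint from every other $u$-$w$ path, and the other paths of any family avoid the edge $uw$.
    \item $\kappa_{G-uw}(u,w)=\abs{X}+\kappa_H(u,w)$, where $u$ and $w$ are non-adjacent in $G-uw$.
\end{enumerate}
For the lower bound in (2), take a maximum family of internally vertex-disjoint $u$-$w$ paths in $H$ and add the paths $(u,x,w)$ for $x\in X$; these are internally disjoint from everything in $H$. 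For the upper bound in (2), use Menger's theorem in its separator form. If $T$ is a minimum $u$-$w$ separator in $H$, then $T\cup X$ separates $u$ and $w$ in $G-uw$. Hence $\kappa_{G-uw}(u,w)\le \abs{T}+\abs{X}=\kappa_H(u,w)+\abs{X}$.

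Finally, \zcref{matching-ivdp} says that a maximum family of internally vertex-disjoint $u$-$w$ paths in $H$ has the form $\cP(M)$ for a maximum matching $M$ of $G[U\cup W]$. Therefore $\kappa_H(u,w)=m$, and combining the three steps gives $\kappa_G(u,w)=\abs{X}+m+1$.

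The main point needing care is the upper bound in step (2). The separator argument is cleanest there, because a direct path-exchange argument (rerouting paths that pass through $X$ but are longer than length $2$) is messier.
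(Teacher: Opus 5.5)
Your proof is correct and follows the paper's structure: the same three classes of paths, the same graph $H$ (the paper's $\widehat{G}$) identified as the $(u,w)$-extension of $(G[U\cup W],W,U)$, and the same use of \zcref{matching-ivdp} to get exactly $m$ paths inside it. The difference is in the upper bound, i.e.\ why a maximum family cannot beat $1+\abs{X}+\kappa_H(u,w)$.

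The paper takes a maximum family $\cP^*$ of internally vertex-disjoint $u$-$w$ paths with minimum total length. Maximality forces $(u,w)\in\cP^*$. For each $x\in X$, either $x$ is unused, so $(u,x,w)$ could be added, or $x$ lies on a path of length at least $3$, which could be replaced by $(u,x,w)$ to shorten the total length. Hence every $(u,x,w)$ lies in $\cP^*$, and the remaining paths form a maximum family in $\widehat{G}$.

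You instead split off $uw$ and bound $\kappa_{G-uw}(u,w)$ with the separator form of Menger's theorem in $H$, enlarging a minimum $u$-$w$ separator $T$ of $H$ by $X$. Your route isolates a clean general fact: $\kappa_{G'}(u,w)=\abs{X}+\kappa_{G'-X}(u,w)$ for non-adjacent $u,w$ and any set $X$ of common neighbours. The cost is the non-trivial direction of Menger's theorem. The paper's extremal choice proves the same fact with a two-line exchange and stays elementary, since \zcref{matching-ivdp} is itself proved by shortcutting rather than a min--max theorem. So, contrary to your closing remark, direct rerouting is not messier once the family is chosen to minimise total length.
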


\begin{proof}
    Refer to~\zcref{fig:co-chordal} for an illustration of the setup of this lemma. 
    \begin{figure}[htbp]
        \centering
        \includegraphics[width=0.6\linewidth]{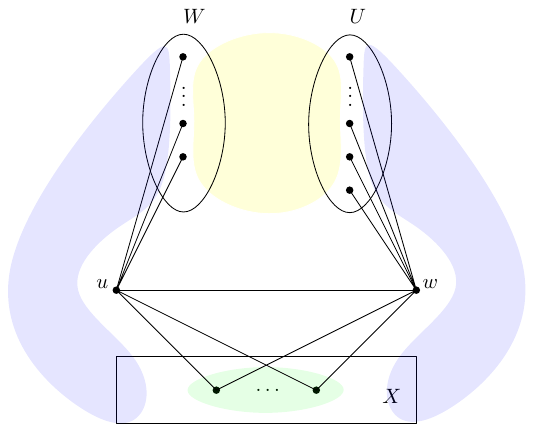}
        \caption{The structure of a co-chordal graph with co-diameter at least~$3$. 
        The figure shows all the edges incident with $u$ or $w$, and $U$ and $W$ are independent sets. 
        In addition, there might be some edges between $U$ and $W$ (yellow), between $U\cup W$ and $X$ (purple), and inside $X$ (green).}\label{fig:co-chordal}
    \end{figure}
    
    Let $\cP^*$ be a maximum collection of pairwise internally vertex-disjoint $u$-$w$ paths in~$G$, such that the \emph{total length} of $\cP^*$, that is, $\sum_{P \in \cP^*} \abs{E(P)}$, is minimal.

    First, observe that $(u,w) \in \cP^*$, by the maximality of~$\cP^*$.
    
    Second, for every ${x \in X}$, it follows from the definition of~$X$ that $(u,x,w)$ is a path in $G$. Furthermore, $(u,x,w) \in \cP^*$. Indeed, if for some $x \in X$ the path $(u,x,w) \notin \cP^*$, then either $x$ is not a vertex of any path in $\cP^*$, or there exists a path $P' \in \cP^*$ of length at least $3$ and containing $x$. In the first case, we get a contradiction with maximality of $\cP^*$, since the path $(u,x,w)$ could be added to $\cP^*$.
    In the second case,  $P'$ could be replaced by the path $(u,x,w)$, which is in contradiction with minimality of the total length of $\cP^*$.

    Lastly, let $\cP_3^*$ be the remaining paths in $\cP^*$, that is, 
    $\cP_3^*$ is the set of paths in $\cP^*$ that do not belong to the set $\{(u,w)\}\cup\{(u,x,w) \colon x \in X\}$.
    Then $\cP_3^*$ is a maximum collection of pairwise internally vertex-disjoint $u$-$w$ paths in the graph ${\widehat{G} \coloneqq G[U\cup W \cup \{u,w\}]-uw}$. Notice that $G[U \cup W]$ is a  bipartite graph, $N_{\widehat{G}}(u) = W$, and $N_{\widehat{G}}(w) = U$.
    Hence, $\widehat{G}$ is the $(u,w)$-extension (see \zcref{extension-of-G}) of $(G[U \cup W],W,U)$ and thus, by \zcref{matching-ivdp}, $\cP_3^*=\cP(M)$ for some maximum matching $M$ in $G[U\cup W]$. Consequently, ${\abs{\cP_3^*} = m}$. 
    
    So ${\kappa_G(u,w) 
    = \abs{\cP^*} 
    = \abs{\{(u,w)\}} + \abs{\{(u,x,w) \colon x \in X\}} + \abs{\cP_3^*}
    = 1 + \abs{X} + m}$.
\end{proof}

Next, we use \zcref{cochordalkappa} to show that there is no minimally tough co-chordal graph with connected complement and co-diameter at least~$4$. 

\begin{lemma}\label{characterization-co-chordal-diam4}
     Let $G$ be a 
     co-chordal graph with connected complement and co-diameter greater or equal to~$4$. 
     Then $G$ is not minimally tough.
\end{lemma}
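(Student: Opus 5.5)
The plan is to argue by contradiction via \zcref{cor:dom-edge-cond1}: find a dominating edge $uw$ whose local connectivity $\kappa_G(u,w)$ is strictly larger than $\kappa(G)$.

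Suppose $G$ is minimally $t$-tough for some $t$. Since $\Gc$ is connected, the co-diameter $d$ is finite, and $d\ge 4$. Also $t>0$: if $G$ were disconnected, some $z$ would be nonadjacent in $G$ to everything outside its component, and $\Gc$ would have diameter at most $2$. By \zcref{SimplVert-MaxDistance}, pick simplicial vertices $u,w$ of $\Gc$ with $d_{\Gc}(u,w)=d$. Let $U$, $W$, $X$ be as in \zcref{cochordalkappa}. By \zcref{dom-edge}, $uw$ is a dominating edge of $G$, so \zcref{cor:dom-edge-cond1} gives $\kappa(G)=\kappa_G(u,w)$. It then suffices to show $\kappa(G)<\kappa_G(u,w)$.

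First compute $\kappa_G(u,w)$ explicitly. Because $d\ge 4$, no vertex of $U$ is adjacent in $\Gc$ to a vertex of $W$. Otherwise $u,a,b,w$ would be a $u$-$w$ path of length $3$ in $\Gc$. Hence $G[U\cup W]$ is the complete bipartite graph between $U$ and $W$; these are independent sets in $G$ because $u,w$ are simplicial in $\Gc$. So $m=\min\{\abs{U},\abs{W}\}$. Since $U$, $W$, $X$, $\{u,w\}$ partition $V(G)$, \zcref{cochordalkappa} yields
\[
\kappa_G(u,w)=\abs{X}+\min\{\abs{U},\abs{W}\}+1=n-1-\max\{\abs{U},\abs{W}\},
\]
where $n=\abs{V(G)}$.

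Next bound $\kappa(G)$ from above by the degree of a suitable vertex. By symmetry assume $\abs{U}\ge\abs{W}$. Take a shortest $u$-$w$ path in $\Gc$, of length $d\ge 4$. Its second vertex $a$ lies in $U$, and its third vertex $y$ is not in $U\cup\{u\}$: otherwise the path could be shortcut, since $N_{\Gc}[u]$ is a clique. Also $y$ is not in $W\cup\{w\}$, since $d\ge4$. So $y\in X$. In $\Gc$, the vertex $a$ is adjacent to $u$, to all of $U\setminus\{a\}$ (clique), and to $y$. Hence $d_{\Gc}(a)\ge\abs{U}+1$, and therefore $d_G(a)\le n-2-\abs{U}$. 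Since $a$ is not universal in $G$ (it misses $u$), $N_G(a)$ separates $a$ from $u$. Thus
\[
\kappa(G)\le d_G(a)\le n-2-\abs{U}<n-1-\abs{U}=\kappa_G(u,w),
\]
contradicting $\kappa(G)=\kappa_G(u,w)$. If instead $\abs{W}>\abs{U}$, run the same argument from the $w$ end of the path.

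The step needing the most care is the structural claim that the third vertex $y$ of a shortest path lies in $X$ and is a $\Gc$-neighbour of $a$ outside $U\cup\{u\}$. This is exactly where $d\ge4$ (rather than $d=3$) is used. Without it, $U$–$W$ edges of $\Gc$ may exist, and $a$ need not have the extra $\Gc$-neighbour that pushes $d_G(a)$ below $\kappa_G(u,w)$.
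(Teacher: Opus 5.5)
Your proof is correct, and it reaches the contradiction by a different route from the paper's.

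\emph{Common ground.} Both arguments compute $\kappa_G(u,w)=|X|+\min\{|U|,|W|\}+1$ from the local-connectivity lemma for co-chordal graphs. Both use that $G[U\cup W]$ is complete bipartite when $d\ge 4$.

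\emph{The paper's route.} It works with toughness directly. Assuming $|W|\le|U|$, it shows $2t+1\le\kappa_G(u,w)$, which contradicts $\kappa_G(u,w)<2t+1$ for a dominating edge. It does this with explicit separators in two cases. If $|U|=1$, it takes $N_G(x)$ for a vertex $x$ of $\Gc$-degree at least $2$, giving $t\le (n-3)/2$. If $|U|\ge 2$, it takes $N_G(u)=X\cup W\cup\{w\}$, which leaves $|U|+1\ge 3$ isolated vertices, giving $t\le\kappa_G(u,w)/3$.

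\emph{Your route.} You use the packaged equality $\kappa(G)=\kappa_G(u,w)$ and show that the middle link of the chain $\lceil 2t\rceil\le\kappa(G)\le\kappa_G(u,w)$ is strict. This needs only the single vertex $a\in U$ on a shortest $u$-$w$ path of $\Gc$. Its extra $\Gc$-neighbour $y\in X$ forces $\kappa(G)\le\delta(G)\le d_G(a)\le n-2-|U|<\kappa_G(u,w)$.

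\emph{What each buys.} Your argument is uniform and case-free, relies only on $\kappa(G)\le\delta(G)$, and proves the slightly stronger fact $\delta(G)<\kappa_G(u,w)$. The paper's Case~1 is essentially this low-degree-vertex idea phrased through toughness. Its Case~2 instead exploits the number of components of $G-N_G(u)$ and gives a quantitative margin, $2t+1\le\frac23\kappa_G(u,w)+1$.

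\emph{Two cosmetic points.} First, $y\notin U$ holds simply because every vertex of $U$ is $\Gc$-adjacent to $u$; the clique property is not needed there. Second, in your connectedness remark, \emph{every} vertex (not just some $z$) is $\Gc$-adjacent to all vertices outside its $G$-component, and that is what gives co-diameter at most $2$. Neither point affects correctness.
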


\begin{proof}
Since $\Gc$ is chordal and ${\diam(\Gc) \geq 4}$, by \zcref{SimplVert-MaxDistance}, there exist simplicial vertices~$u$ and~$w$ at distance $\diam(\Gc)$ in $\Gc$. 
We define $U$, $W$, $X$, and~$m$ as in \zcref{cochordalkappa}, and conclude that ${\kappa_G(u,w) = \abs{X} + m + 1}$. 
Without loss of generality assume that ${\abs{W} \leq \abs{U}}$, and observe that~$G[U \cup W]$ is a complete bipartite graph. 
Hence~$m = \abs{W}$, implying ${\kappa_G(u,w) = \abs{X} + \abs{W} + 1}$.

Let~$t$ be the toughness of $G$. 
Since the distance between $u$ and $w$ in $\Gc$ is at least $4$, by \zcref{dom-edge}, $uw$ is a dominating edge in $G$.
Hence, by \zcref{dom-edge-cond1}, to show that $G$ is not minimally tough, it suffices to show that $2t+1\leq \kappa_G(u,w)$. 

Since $N_{\Gc}(u) = U$, we infer that $U$ is non-empty, as otherwise $\Gc$ would not contain any $u$-$w$ path, contrary to the fact that $\Gc$ is connected.  
Similarly, $W$ is non-empty. 
We consider two cases.
 
\emph{Case 1:} $\abs{U}=1$. Let~${n \coloneqq \abs{V(G)}}$. 
In this case $\abs{W} = 1$ and hence, ${n = \abs{X}+4}$ and ${\kappa_G(u,w)=\abs{X}+2=n-2}$.
Furthermore, since~$\Gc$ is connected and ${\diam(\Gc) \geq 4}$, there exists a vertex ${x \in V(\Gc)}$ with ${d_{\Gc}(x) \geq 2}$. 
Then ${d_G(x) \leq n-3}$ and $N_G(x)$ is a separator in~$G$. 
Hence ${t \leq \frac{n-3}{2}}$. 
So, ${2t+1 \leq n-3+1=\kappa_G(u,w)}$. 

\emph{Case 2:} $\abs{U} \geq 2$. 
Notice that $\abs{X} \geq 1$, since otherwise $\Gc$ would be disconnected, as there are no edges between $U\cup \{u\}$ and $W \cup \{w\}$ in $\Gc$. 
Furthermore, the set ${S=X \cup W \cup \{w\}=N_G(u)}$ is a separator in $G$ and $V(G)\setminus S=U \cup \{u\}$. 
Since $U \cup \{u\}$ is independent in~$G$, we have~${c(G-S)=\abs{U}+1}$ and hence, ${t \leq \frac{\abs{X}+\abs{W}+1}{\abs{U}+1} \leq \frac{\abs{X}+\abs{W}+1}{3}}$, since $\abs{U}\geq 2$.
Thus 
\[
    2t+1 \leq \frac{2}{3}(\abs{X}+\abs{W}+1)+1 
    = \frac{2}{3}\kappa_G(u,w)+1 
    \leq \kappa_G(u,w)\,,
\] 
where the last inequality follows from the fact that ${\kappa_G(u,w) = \abs{X} + \abs{W} + 1 \geq 3}$.
\end{proof}

Finally, we apply~\zcref{cochordalkappa} to show that the only minimally tough co-chordal graphs with co-diameter~$3$ are the double stars. 

\begin{lemma}\label{characterization-co-chordal-diam3}
    Let $G$ be a 
    co-chordal graph with co-diameter~$3$. 
    Then $G$ is minimally tough if and only if $G$ is isomorphic to~$\doublestar{k}{\ell}$ for some~${k,\ell \geq 1}$. 
\end{lemma}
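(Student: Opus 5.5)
We follow the scheme of \zcref{characterization-co-chordal-diam4}. Take simplicial vertices $u,w$ of $\Gc$ at distance $3$ (\zcref{SimplVert-MaxDistance}) and define $U,W,X,m$ as in \zcref{cochordalkappa}, so $\kappa_G(u,w)=\abs{X}+m+1$. Since $d_{\Gc}(u,w)=3$, the edge $uw$ is dominating in $G$ (\zcref{dom-edge}). Hence, by \zcref{dom-edge-cond1} and \zcref{cor:dom-edge-cond1}, minimal toughness forces $\kappa(G)=\kappa_G(u,w)=\ceil{2t}$. So for the forward direction it suffices to show $2t+1\le \abs{X}+m+1$ unless $G$ is a double star.

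\textbf{Structure of $\Gc$.} In $\Gc$, the sets $U$ and $W$ are cliques ($u,w$ simplicial), and at distance $3$ there is at least one $\Gc$-edge between $U$ and $W$. In $G$, both $U\cup\{u\}$ and $W\cup\{w\}$ are independent sets. The graph $G[U\cup W]$ is bipartite, namely the bipartite complement of the $\Gc$-edges between $U$ and $W$.

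\textbf{Upper bound on $t$.} Assume $\abs{U}\ge\abs{W}$. The separator $S=N_G(u)=X\cup W\cup\{w\}$ leaves the independent set $U\cup\{u\}$, so
\[t\le \frac{\abs{X}+\abs{W}+1}{\abs{U}+1}.\]
By König's theorem, $m$ equals a minimum vertex cover $C$ of $G[U\cup W]$. Then $S'=X\cup C\cup\{u,w\}\setminus\{\dots\}$ should be a separator whose removal leaves the independent set $(U\cup W)\setminus C$ together with possibly $u$ or $w$. More carefully, taking $S'=X\cup C$ leaves $(U\setminus C)\cup\{u\}$ and $(W\setminus C)\cup\{w\}$. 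In $G$, vertex $u$ is adjacent to $W$ and $w$, and $w$ is adjacent to $U$. So I would instead take $S''=X\cup C\cup\{w\}$, whose removal leaves $u$ adjacent to $W\setminus C$ and leaves $U\setminus C$ as isolated vertices. This gives
\[t\le \frac{\abs{X}+m+1}{1+\abs{U\setminus C}}.\]
Whenever the denominator is at least $2$, we get $2t+1\le \abs{X}+m+1$ as soon as $\abs{X}+m+1\ge 2$, mirroring the computation in Case~2 of \zcref{characterization-co-chordal-diam4}. Choosing between $S''$ and its symmetric version, the remaining bad case is when $C\supseteq U$ and $C\supseteq W$ for every minimum cover. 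This essentially forces $G[U\cup W]$ to have a perfect matching with $\abs{U}=\abs{W}=m$, and then the separator $N_G(u)$ gives $t\le(\abs{X}+m+1)/(m+1)$.

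\textbf{Main obstacle: the extremal cases.} The hardest step will be exactly these extremal cases, together with the cases $\abs{X}$ small or $m$ small. I would also use $\kappa(G)\le\delta(G)$: the equality $\kappa(G)=\abs{X}+m+1$ must hold, while some vertex $y\in U$ has degree in $G$ at most $1+\abs{X}+(\text{its }G\text{-neighbours in }W)$. Comparing these, and showing that vertices in $X$ lead to contradictions (their non-neighbourhood in $\Gc$ yields good separators), should force $X=\emptyset$ and $m=1$. Then $\kappa=2$ and $t\le 1/2$ would push us to $\kappa(G)=1$. The conclusion should be that $G[U\cup W]$ is edgeless, so $\Gc[U\cup W]$ is a complete graph (a clique on $U\cup W$ with $u,w$ pendant is the complement of a double star). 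In that case $G$ is $\doublestar{\abs{W}}{\abs{U}}$, with centers $u$ and $w$.

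\textbf{Converse.} The double star $\doublestar{k}{\ell}$ is a tree with toughness $1/\Delta$. Every edge $e$ is a bridge, so $\kappa_G$ of its endpoints is $1<2t+1$, and Condition~\ref{cond1} of \zcref{mintough} holds for all edges. Finally, its complement is chordal of diameter $3$: it consists of the cliques $U\cup W$, with $u$ adjacent to $U$ and $w$ adjacent to $W$.
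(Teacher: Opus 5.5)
There is a genuine gap in the forward direction. Your reduction is right: $uw$ is dominating, so it suffices to show $2t+1\le K$, where $K:=\abs{X}+m+1=\kappa_G(u,w)$. Your converse is also fine. The problem is that the key inequality is off by one.

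\textbf{The off-by-one.} The set $S''=X\cup C\cup\{w\}$ has exactly $K$ vertices. If $G-S''$ has only two components, you get $t\le K/2$, i.e.\ $2t+1\le K+1$. This is no stronger than $\ceil{2t}\le\kappa(G)\le K$, which you already have, and it can never contradict $K<2t+1$. What you need is $t\le (K-1)/2$. That requires either a separator of size $K$ leaving at least three components (and even then only when $K\ge 3$, as in Case~2 of the co-diameter-$4$ lemma), or a separator of size at most $K-1$, i.e.\ $\kappa(G)<K$. The latter is what the paper effectively shows by analysing internally disjoint $u'$-$w'$ paths.

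\textbf{The residual cases.} Your description of the ``bad case'' is wrong. Since $\abs{C}=m\le\min\{\abs{U},\abs{W}\}$, no minimum cover contains $U\cup W$. The configurations your separators actually miss include $m=0$ with $X\neq\emptyset$. For example, let $\Gc$ be the path $u\,u_0\,w_0\,w$ with an extra leaf $x$ at $u_0$. Then $K=2$, and $S''$, its mirror image and $N_G(u)$ all have ratio at least $1$. Yet this $G$ must be excluded (here $\delta(G)=1$). Your last paragraph, which is supposed to handle exactly these cases, only lists what ``should'' happen. It is also inconsistent: it aims at $m=1$, whereas a double star has $m=0$. Nothing in the proposal proves $X=\emptyset$ and $m=0$; in the paper this is the content of the Claim and of the matching construction.

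\textbf{How to complete it.} Your approach can be completed, and would then give a route genuinely different from the paper's path-and-matching argument.
\begin{itemize}
\item If $m\ge 1$, some minimum cover leaves at least two vertices of one side uncovered. If $m=\min\{\abs{U},\abs{W}\}$, take $C$ to be the smaller side; the larger side then has size at least $2$, because $m\ge1$ excludes $\abs{U}=\abs{W}=1$. Otherwise $\abs{U}+\abs{W}-m\ge 3$. So $S''$ or its mirror leaves at least three components, giving $2t+1\le \tfrac23K+1\le K$ whenever $K\ge 3$.
\item If $m=0$ and $X\neq\emptyset$, connectivity of $\Gc$ gives some $y\in U\cup W$ with a $\Gc$-neighbour in $X$. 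Hence $d_G(y)\le\abs{X}=K-1$, and $\ceil{2t}\le\kappa(G)\le\delta(G)$ gives the contradiction.
\item If $X=\emptyset$ and $m=1$, then $u_0$ or $w_0$ has degree~$1$ in $G$; otherwise $u_0w_1,u_1w_0$ would be a matching of size~$2$. The same degree bound then finishes.
\end{itemize}
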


\begin{proof}
Sufficiency of the stated condition follows from the fact that double stars are trees, hence, they are minimally tough.

For necessity, assume that $G$ is minimally tough. 
Since $\Gc$ is chordal with $\diam(\Gc)=3$, by~\zcref{SimplVert-MaxDistance}, there exist simplicial vertices $u$ and $w$ at distance $3$ in $\Gc$. 
Again using the setup of \zcref{cochordalkappa}, we conclude that~${\kappa_G(u,v) = \abs{X} + m + 1}$. 

Let $t$ denote the toughness of $G$.
Since $uw$ is a dominating edge by \zcref{dom-edge}, with~\zcref{dom-edge-cond1} we obtain that $\kappa_G(u,w)<2t+1$, and consequently $\kappa_G(u,w)=\ceil{2t}$ by~\zcref{Cond1-equiv}. 
Hence,
\begin{equation}\label{eq:kGuv}
    \abs{X}+ m + 1 = \ceil{2t}.
\end{equation}
For every pair of vertices $u'\in U$ and $w'\in W$ we get from~\eqref{eq:kGuv}, \zcref{toughness-connectivity}, and~\zcref{conn-ivdp} that
\begin{equation}\label{kappaGu'w'}
\abs{X} + m + 1 = \ceil{2t} \leq \kappa(G) \leq \kappa_G(u',w').
\end{equation}
Furthermore, we define $\cP(u',w')$ to be a maximum collection of internally vertex-disjoint $u'$-$w'$ paths in $G$, and let $\cP_X(u',w') \coloneqq \{P \in \cP(u',w') \colon V(P)\cap X \neq \emptyset\}$ be the set of paths in $\cP(u',w')$ that contain at least one vertex from $X$. 
Since the paths in $\cP(u',w')$ are internally vertex-disjoint,~${\abs{\cP_X(u',w')} \leq \abs{X}}$. 
Let~${\widehat{\cP}(u',w') \coloneqq \cP(u',w') \setminus \cP_X(u',w')}$.
We first establish the following claim.

\medskip\noindent
\emph{Claim.} If ${m=0}$, then ${\abs{X} = 0}$. 
\begin{subproof}[Proof of Claim]
Assume that $m=0$. 
Similarly as in the proof of \zcref{characterization-co-chordal-diam4}, the sets $U$ and $W$ are non-empty.

Let $u' \in U$ and $w'\in W$ be arbitrary vertices. 
Since $m = 0$, there are no edges between $U$ and $W$.
Consequently, the only $u'$-$w'$ path in $G[U\cup \{u\} \cup W \cup \{w\}]$ (i.e., the only possible $u'$-$w'$ path not containing any vertices from $X$) is $(u',w,u,w')$ and hence ${\abs{\widehat{\cP}(u',w')} \leq 1}$.
 But then, by~\eqref{kappaGu'w'}, 
 \[
    \abs{X} + 1 
    \leq \kappa_G(u',w') 
    = \abs{\cP(u',w')}
    = \abs{\cP_X(u',w')} + \abs{\widehat{\cP}(u',w')}
    \leq \abs{X} + 1, 
\]
 which implies that ${\widehat{\cP}(u',w') = \{(w',u,w,u')\}}$ and~${\abs{\cP_X(u',w')} = \abs{X}}$. 

Since ${\abs{\cP_X(u',w')} = \abs{X}}$, each path in $\cP_X(u',w')$ contains exactly one vertex in $X$. 
Furthermore, since ${(w',u,w,u')\in \cP(u',w')}$, no path in $\cP_X(u',w')$ can contain any of the vertices $u$ and $w$. 
Since there are no edges between $U$ and $W$, every path in $\cP_X(u',w')$ of length more than~$2$ must contain at least two vertices from $X$; hence, no such path can exist.
Consequently, all paths in $\cP_X(u',w')$ have length~$2$; in particular, $\cP_X(u',w')=\{(u',x,w') \colon x \in X\}$, since ${\abs{\cP_X(u',w')} = \abs{X}}$. 
This implies that every vertex in $X$ is adjacent in $G$ to both $u'$ and $w'$.

Since the choice of $u' \in U$ and $w' \in W$ was arbitrary, every vertex in $X$ is adjacent in $G$ to every vertex in $U\cup W$.
Furthermore, by definition of $X$, every vertex in $X$ is adjacent in $G$ to both $u$ and $w$.
This implies that $G=(G-X)*X$. 
Hence, if $\abs{X} \geq 1$, then $\overline{G}$ is disconnected, a contradiction with $\diam(\Gc)=3$.  
\end{subproof}

\medskip
If $m=0$, then the above claim implies that $G \cong \doublestar{k}{\ell}$, where $k=\abs{U}\geq 1$ and $\ell=\abs{W}\geq 1$.

For the remainder of the proof, we assume~${m \geq 1}$ and show that there are no minimally tough graphs in that case. 
Since the distance between $u$ and $w$ in $\Gc$ is $3$, there exists a $u$-$w$ path of length $3$ in $\Gc$, say $(u,u_0,w_0,w)$, where $u_0 \in U$, $w_0 \in W$. Then $u_0w_0 \notin E(G)$ and from~\eqref{kappaGu'w'} we get
\begin{equation}\label{kappaGu0w0}
\abs{X} +m +1 \leq \kappa_G(u_0,w_0).
\end{equation}
We consider the sets $\cP \coloneqq \cP(u_0,w_0)$, $\cP_X \coloneqq \cP_X(u_0,w_0)$ and $\widehat{\cP} \coloneqq \widehat{\cP}(u_0,w_0)$ of $u_0$-$w_0$ paths as defined above.
Let $p \coloneqq \abs{\widehat{\cP}}$. 
We have ${\kappa_G(u_0,w_0)=\abs{\cP} = \abs{\cP_X} + \abs{\widehat{\cP}} \leq \abs{X} + p}$. 
This together with inequality~\eqref{kappaGu0w0} gives us $\abs{X}+m+1 \leq \kappa_G(u_0,w_0) \leq \abs{X}+p$. Hence $m+1\leq p.$ 
In particular, since $m\geq 1$, we have $p \geq 2$.

We now show that, with these assumptions, we can always construct a matching $M$ in $G[U \cup W]$ of size at least $p\geq m+1$. 
This will in turn contradict the choice of $m$ as the size of a maximum matching in $G[U \cup W]$.

Let $\cP_{uw} \coloneqq \{P \in \widehat{\cP}\colon V(P) \cap \{u,w\}\neq \emptyset\}$ be the subset of paths in $\widehat{\cP}$ containing at least one of the vertices $u$ or $w$. 
Since the paths in $\widehat{\cP}$ are internally vertex-disjoint, $\abs{\cP_{uw}} \leq 2$.
Notice that every path in $\widehat{\cP} \setminus \cP_{uw}$ has length at least~$3$ and alternates between vertices in $U$ and vertices in $W$.
We consider two cases. 

\medskip
\emph{Case 1:} $ \abs{\cP_{uw}} \leq 1$. 
Since $2\leq p=\abs{\widehat{\cP}} = \abs{\widehat{\cP}\setminus \cP_{uw}} + \abs{\cP_{uw}}$, the assumption $\abs{\cP_{uw}} \leq 1$ implies that $\abs{\widehat{\cP}\setminus \cP_{uw}} \geq 1$.
We construct a matching $M$ in the following way.
We start with $M=\emptyset$. 
Then we take an arbitrary path $Q \in \widehat{\cP} \setminus \cP_{uw}$ and add to $M$ the edges of~$Q$ incident to $u_0$ and $w_0$ (note that these two edges are disjoint, since $Q$ has length at least~$3$).
Finally, for every path  $P \in \widehat{\cP} \setminus (\cP_{uw} \cup \{Q\})$, we add to $M$ any edge of~$P$ not incident to $u_0$ or~$w_0$. 
Since there are at least $p-2$ paths in $\widehat{\cP} \setminus (\cP_{uw} \cup \{Q\})$, each of length at least~$3$, this gives us at least $p-2$ edges in $M$. 
Together with the two edges from path $Q$, we thus get $\abs{M} \geq p$. 
Then, $M$ is indeed a matching in $G[U \cup W]$, since paths in~$\widehat{\cP} \setminus \cP_{uw}$ are internally vertex-disjoint, each of the endpoints $u_0$ and $w_0$ is incident with only one edge in~$M$, and all the endpoints of edges in~$M$ are in $U\cup W$.

\medskip
\emph{Case 2:} $\abs{\cP_{uw}} =2$. 
In this case $\abs{\cP_{uw}}=2$, i.e., there exist distinct paths $P_u$ and $P_w$ in $\cP_{uw}$ such that $u \in V(P_u)$ and $w \in V(P_w)$. 
Furthermore, since $P_u$ and $P_w$ are internally vertex-disjoint, $P_u$ does not contain $w$. 
Hence, there exists a vertex $w_1 \in W\setminus \{w_0\}$ such that $u_0w_1 \in E(P_u)$. 
Similarly, there exists a vertex $u_1 \in U\setminus \{u_0\}$ such that $w_0u_1 \in E(P_w)$. 
We construct a matching~$M$ in $G[U \cup W]$ by taking the edges $u_0w_1$ and $w_0u_1$.
Furthermore, for every path $P \in \widehat{\cP} \setminus \{P_u,P_w\}$, we add to $M$ any edge not incident to~$u_0$ or~$w_0$. 
Then, as before, $M$ is a matching in $G[U \cup W]$ with $\abs{M}=p$. 

\medskip
Hence, in either case, we obtain a contradiction with the fact that $m<p$ and $m$ is the size of a maximum matching in $G[U \cup W]$.
\end{proof}

Combining \zcref{characterization-co-chordal-disconnected,characterization-co-chordal-diam3,characterization-co-chordal-diam4} and the fact that ${\doublestar{1}{1}\cong P_4}$ proves \zcref{characterization-co-chordal}, which we restate for the reader's convenience. 
\getkeytheorem{cochordal}

It follows from \zcref{characterization-co-chordal} that the generalized Kriesell's conjecture holds for co-chordal graphs with co-diameter at least $3$.
\begin{corollary}\label{corollary:GKC2}
    Let $t$ be a positive real number and let $G$ be a minimally $t$-tough co-chordal graph with co-diameter at least $3$.
    Then $G$ has a vertex of degree $\ceil{2t}$.
\end{corollary}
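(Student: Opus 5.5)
By \zcref{characterization-co-chordal}, a minimally $t$-tough co-chordal graph $G$ with co-diameter at least~$3$ and $t>0$ is non-trivially minimally tough. Hence $G$ is isomorphic to one of $P_4$, $K_{2,3}$, $K_{1,\ell}$, $\doublestar{k}{\ell}$, $\turan{2\ell}{\ell}$, or $\turan{2\ell-1}{\ell}$ with $k\ge 1$ and $\ell\ge 2$. The plan is to check each family separately, showing that the minimum degree $\delta(G)$ equals $\ceil{2t}$, where the value of $t$ is taken from \zcref{table:toughness}.

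\textbf{Families already handled.} For $K_{2,3}$, $K_{1,\ell}$, $\turan{2\ell}{\ell}$ and $\turan{2\ell-1}{\ell}$ the graph is $P_4$-free. So we can invoke \zcref{cor:Kriesell-P4-free} directly, or repeat its computation: there $\delta(G)=\ceil{2t}$, using \zcref{prop3} and \zcref{toughness-complete-bipartite}.

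\textbf{Trees.} The remaining graphs $P_4\cong\doublestar{1}{1}$ and $\doublestar{k}{\ell}$ are trees. Each has a leaf, so $\delta(G)=1$. The toughness of a tree with maximum degree $\Delta\ge 2$ is $1/\Delta$, which lies in $(0,1/2]$, so $\ceil{2t}=1$. Alternatively, for a double star we may take a tough separator consisting of the centre of larger degree, whose removal leaves $\max\{k,\ell\}+1\ge 2$ components; this gives $t\le 1/2$ directly, while $t>0$ because $G$ is connected. In either case a leaf is a vertex of degree $\ceil{2t}$.

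\textbf{Main obstacle.} No step is genuinely hard. The only point needing care is making sure that $\ceil{2t}$ computed from the table matches the minimum degree in the boundary cases $\ell=2$. For example, $\turan{3}{2}\cong K_{1,2}$ has $t=1/2$ and $\delta=1$, and $\turan{4}{2}\cong C_4$ has $t=1$ and $\delta=2$. Both agree.
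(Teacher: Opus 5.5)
Your proposal is correct and follows essentially the same route as the paper. The paper splits the list from the co-chordal classification into the $P_4$-free members, handled by the $P_4$-free Kriesell corollary, and the double stars, including $P_4\cong S_{1,1}$. For the double stars, $\delta(G)=1=\lceil 2t\rceil$ because $t=1/(\ell+1)\in(0,1/2]$. Your single-centre separator bound $t\le 1/(\max\{k,\ell\}+1)$ is a harmless self-contained substitute for citing the tree-toughness formula.
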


\begin{proof}
    Since~$t$ is a positive real number, $G$ is non-trivially minimally tough.
    Hence, by \zcref{characterization-co-chordal}, $G$ is either $P_4$-free or isomorphic to $\doublestar{k}{\ell}$ for some~${k,\ell \geq 1}$. 
    In the former case the claim holds by \zcref{cor:Kriesell-P4-free}.
    Assume now that ${G\cong \doublestar{k}{\ell}}$ for some ${k,\ell\geq 1}$ and without loss of generality assume ${k\leq \ell}$. 
    Then ${t=\frac{1}{\ell+1}}$ (see, e.g., \cite{Broersma2015}) and ${\delta(G)=1=\ceil{2t}}$.
\end{proof}

\subsection{Net-free co-chordal graphs}
\label{sec:net-free}

We now derive \zcref{characterization-net-free-co-chordal} from \zcref{characterization-co-chordal} and the following characterization of hereditary closed neighbourhood-Helly graphs due to Groshaus and Szwarcfiter~\cite{MR2383735}.
A graph is said to be \emph{closed neighbourhood-Helly} if the family of closed neighbourhoods of vertices in $G$ have the Helly property, that is, every pairwise intersecting subfamily of the closed neighborhoods of $G$ has a common element. 
A graph $G$ is said to be \emph{hereditary closed neighbourhood-Helly} if every induced subgraph of $G$ is closed neighbourhood-Helly.
Recall that the \emph{net} is the graph $S_{1,1,1}$, that is, the graph obtained from the complete graph $K_3$ by attaching a pendant edge to each vertex.
The \emph{co-net} is the complement of the net.
(The co-net is referred to in the literature under various other names, including the \emph{$3$-sun} and the \emph{Hajós graph}.)

\begin{theorem}[Groshaus and Szwarcfiter~\cite[Theorem 3.2]{MR2383735}]\label{thm:GH:hcnH}
A graph $G$ is hereditary closed neighbourhood-Helly if and only if it contains neither $C_4$, $C_5$, $C_6$ nor the co-net as an induced subgraph.
\end{theorem}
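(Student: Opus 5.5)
The statement splits into two directions. For necessity, note that the class of hereditary closed neighbourhood-Helly graphs is closed under induced subgraphs. So it suffices to check that none of $C_4$, $C_5$, $C_6$, and the co-net is closed neighbourhood-Helly, which I would do by exhibiting explicit bad families.
\begin{itemize}
\item In $C_4$ with vertices $1,2,3,4$ in cyclic order, the four closed neighbourhoods pairwise intersect. Each misses the vertex opposite to its centre, so there is no common element.
\item In $C_5$, any two of the sets $N[i]=\{i-1,i,i+1\}$ intersect, but no vertex lies in all five.
\item In $C_6$, the sets $N[1],N[3],N[5]$ pairwise meet in $2,4,6$ respectively and have empty intersection.
\item In the co-net, take the triangle $a,b,c$ and the degree-$2$ vertices $x,y,z$ with $N(x)=\{a,b\}$, $N(y)=\{b,c\}$, $N(z)=\{a,c\}$. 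The sets $N[x],N[y],N[z]$ pairwise intersect but have no common element.
\end{itemize}

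For sufficiency, let $G$ contain none of the four graphs as an induced subgraph. Every induced subgraph of $G$ has the same property, so it suffices to prove that $G$ itself is closed neighbourhood-Helly. Suppose not, and choose a pairwise intersecting family $N[v_1],\dots,N[v_k]$ with empty intersection and $k$ minimal. Then $k\ge 3$, and by minimality, for each $i$ there is a vertex $w_i\in\bigcap_{j\ne i}N[v_j]$ with $w_i\notin N[v_i]$. Thus $w_i$ equals or is adjacent to every $v_j$ with $j\neq i$, but $w_i\neq v_i$ and $w_iv_i\notin E(G)$; in particular all the $w_i$ are distinct. I would first reduce to $k=3$, or otherwise extract structure from three indices. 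If $k\ge 4$, then $w_1$ and $w_2$ are both in $N[v_3]\cap N[v_4]$, and $w_1\sim v_2$ (or equal) while $w_2\sim v_1$. Using $w_1\not\sim v_1$ and $w_2\not\sim v_2$, the vertices $v_1,w_2,v_2,w_1$ then tend to form an induced $C_4$, once we sort out whether $v_1v_2$ and $w_1w_2$ are edges and whether some $w_i$ coincides with some $v_j$. The chords must be handled by substituting $v_3$ or $v_4$ and repeating.

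For $k=3$, the configuration is six vertices $v_1,v_2,v_3,w_1,w_2,w_3$ with the following properties: $w_i\in N[v_j]$ for $j\ne i$, and $w_i$ is non-adjacent to and distinct from $v_i$. I would do a case analysis on whether $w_i=v_j$ for some $j\neq i$, and on which edges are present among the $v$'s and among the $w$'s.
\begin{itemize}
\item If the $w_i$ form a triangle and the $v_i$ are pairwise non-adjacent, we get the co-net.
\item Missing edges among the $w$'s produce long induced cycles through alternating $v$'s and $w$'s, giving $C_6$, or $C_5$ or $C_4$ when one chord or coincidence is present.
\item Edges $v_iv_j$ produce induced $C_4$'s such as $v_i w_j v_k w_i$-type cycles, or allow replacing $w$'s by $v$'s to shorten the configuration.
\end{itemize}
Choosing the $w_i$ and $v_i$ to minimise the number of edges or coincidences should keep this case analysis finite. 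The main obstacle is precisely this exhaustive case analysis, ensuring every possibility yields one of the four forbidden graphs. I would organise it by the number of edges in $G[\{w_1,w_2,w_3\}]$ (zero to three), and within each case by the edges among the $v_i$.
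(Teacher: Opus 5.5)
Your necessity direction is correct: each of the four families you exhibit is pairwise intersecting with empty intersection, and heredity does the rest. (The paper gives no proof of this theorem; it quotes it from Groshaus and Szwarcfiter.)

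\textbf{The gap is in sufficiency.} Your set-up there is sound: a minimal non-Helly family $N[v_1],\dots,N[v_k]$ with $k\ge 3$, and witnesses $w_i\in\bigcap_{j\ne i}N[v_j]\setminus N[v_i]$. But the entire content of this direction is the case analysis you defer as ``the main obstacle'', and your bullets are not a proof. The $k\ge 4$ paragraph is also off. Since $w_1\not\sim v_1$ and $w_2\not\sim v_2$, the only possible $4$-cycle on $v_1,w_2,v_2,w_1$ is $v_1w_2w_1v_2$. It is present only when \emph{both} $v_1v_2$ and $w_1w_2$ are edges, and ``substituting $v_3$ or $v_4$ and repeating'' is not an argument. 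In fact no reduction in $k$ is needed. For any $k\ge 3$, the indices $1,2,3$ already satisfy $w_i\in N[v_j]\setminus N[v_i]$ for all distinct $i,j\in\{1,2,3\}$. So it suffices to show that this configuration of at most six vertices always contains one of the four graphs. Organising by the edges among the $w_i$, as you propose, is awkward, because $w_i$ may coincide with some $v_j$.

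\textbf{How to close it.} Split instead on $G[\{v_1,v_2,v_3\}]$, which also controls coincidences: $w_i=v_j$ forces $v_j\sim v_l$ and $v_j\not\sim v_i$, where $\{i,j,l\}=\{1,2,3\}$.

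\emph{Some $v_i$, say $v_1$, is adjacent to both others.} Then $w_1\notin N[v_1]\supseteq\{v_1,v_2,v_3\}$, and $w_1$ is adjacent to $v_2,v_3$ but not to $v_1$. If $v_2\not\sim v_3$, then $v_1v_2w_1v_3$ is an induced $C_4$. Otherwise $v_1v_2v_3$ is a triangle lying in every $N[v_i]$, so all $w_i$ lie outside it. An edge $w_iw_j$ then gives the induced $C_4$ $w_iw_jv_iv_j$; if there is no such edge, you have the co-net.

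\emph{$v_1v_2$ is the only edge among the $v_i$.} Then $w_1,w_2$ lie outside $\{v_1,v_2,v_3\}$, and $v_1v_2w_1v_3w_2$ is an induced $C_5$. If $w_1\sim w_2$, then $v_1v_2w_1w_2$ is an induced $C_4$ instead.

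\emph{The $v_i$ are independent.} Then all six vertices are distinct. Zero, one, two or three edges among the $w_i$ give, respectively:
\begin{itemize}
\item the induced $C_6$ $v_1w_3v_2w_1v_3w_2$;
\item an induced $C_5$, e.g.\ $w_1w_2v_1w_3v_2$ when $w_1w_2$ is the edge;
\item an induced $C_4$, e.g.\ $w_1v_3w_2w_3$ when $w_1w_2$ is the non-edge;
\item the co-net.
\end{itemize}

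This completes your approach; no minimisation over edges or coincidences is needed.
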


\getkeytheorem{netfreecochordal}

\begin{proof}
Let $H$ be a co-net-free chordal graph and let $G$ be the complement of $H$.
If~$H$ has diameter at least~$3$, then the equivalence holds by \zcref{characterization-co-chordal}, since each of the graphs $P_4$, $K_{2,3}$, $K_{1,\ell}$, $\doublestar{k}{\ell}$, $\turan{2\ell}{\ell}$, and $\turan{2\ell-1}{\ell}$ for ${k \geq 1}$ and~${\ell \geq 2}$ is net-free.

Suppose now that~$H$ has diameter at most~$2$.
Then, every pair of closed neighbourhoods has a non-empty intersection.
Since $H$ contains neither $C_4$, $C_5$, $C_6$ nor the co-net as an induced subgraph, by~\zcref{thm:GH:hcnH}, the closed neighbourhoods of $H$ have a common element, that is, $H$ has a universal vertex.
Hence, $G$ has an isolated vertex and neither of the two conditions in the theorem holds.
\end{proof}

In particular, the generalized Kriesell's conjecture holds for net-free co-chordal graphs.

Let us also note that, since every interval graph is strongly chordal, and every strongly chordal graph is a co-net-free chordal graph, \zcref{characterization-net-free-co-chordal} implies analogous characterizations of non-trivially minimally tough graphs within classes of complements of interval graphs and complements of strongly chordal graphs. 
The statements are the same as in \zcref{characterization-net-free-co-chordal}, with  ``net-free co-chordal graph'' replaced with ``complement of an interval graph'' and ``complement of a strongly chordal graph'', respectively.

\subsection{Complements of forests}
\label{sec:coforests}

Lastly, we derive \zcref{characterization-co-forests} from \zcref{characterization-co-chordal}.

\getkeytheorem{coforests}

\begin{proof}
Let $F$ be a forest and let $G$ be the complement of $F$.
If~$F$ has diameter at most~$1$, then $G$ is edgeless and neither of the two conditions in the theorem holds.
If~$F$ has diameter $2$, then $F$ is isomorphic to a star with at least two leaves, in which case $G$ is disconnected, and again neither of the two conditions in the theorem holds.

Assume now that $G$ has co-diameter at least~$3$.
Since $G$ is co-chordal, \zcref{characterization-co-chordal} implies that $G$ is non-trivially minimally tough if and only if it is isomorphic to one of $P_4$, $K_{2,3}$, $K_{1,\ell}$, $\doublestar{k}{\ell}$, $\turan{2\ell}{\ell}$, or $\turan{2\ell-1}{\ell}$ for some ${k \geq 1}$ and~${\ell \geq 2}$. 
Observe that $\turan{3}{2}\cong K_{1,2}$.
Therefore, since neither of the graphs $K_{2,3}$, $K_{1,\ell}$ for $\ell\ge 3$, and $\doublestar{k}{\ell}$ with ${k \geq 1}$ and~${\ell \geq 2}$ is a complement of a forest, the theorem follows. 
\end{proof}

As a corollary, similar to \zcref{corollary:GKC2}, we obtain that the generalized Kriesell's conjecture holds for complements of forests.

\section{Open problems}
\label{sec:conclusion}

The main problem left open by this work is whether \zcref{P4free:mintough,characterization-co-chordal,characterization-net-free-co-chordal,characterization-co-forests,complete-char-multipar} admit a common generalization, provided by an answer to the following.

\begin{problem}
Characterize minimally tough weakly chordal graphs.
\end{problem}

This problem includes, as a special case, a characterization of non-trivially minimally tough chordal graphs. 
They were conjectured not to exist for toughness exceeding $1$ by Dallard et al.~\cite{Dall23}.

Another interesting special case is given by the case of co-chordal graphs.
Those that are also chordal are split, for which minimal toughness has been completely characterized~\cite{Kato23}. 
For the general case of co-chordal graphs, we obtained a partial characterization (see \zcref{characterization-co-chordal,characterization-net-free-co-chordal}), leaving open only the case when the complement has diameter $2$ and contains an induced co-net. 
For this case, we conjecture that the non-trivially minimally tough graphs are exactly the graphs obtained from the disjoint union of three stars~$K_{1,\ell}$ for a positive integer~$\ell$ by adding a triangle between their centers; we denote these graphs by~$\triplestar{\ell}{\ell}{\ell}$. 
These graphs are the only minimally tough split graphs with co-diameter~$2$; see Katona and Varga~\cite{Kato23}. 

\begin{conjecture}\label{conj:cochordal-diam-2}
Let $G$ be a co-chordal graph with co-diameter~$2$. 
Then $G$ is non-trivially minimally tough if and only if $G$ is isomorphic to~$\triplestar{\ell}{\ell}{\ell}$ for some~${\ell \geq 1}$.
\end{conjecture}

Since all the graphs $\triplestar{\ell}{\ell}{\ell}$ have toughness at most $1/2$, a possibly easier first step towards proving \zcref{conj:cochordal-diam-2} could be to address the following. 

\begin{question}\label{question:cochordal-diam-2}
    What is the maximum toughness of a minimally tough co-chordal graph with co-diameter~$2$?
\end{question}
We have shown that the generalized Kriesell's conjecture holds for co-chordal graphs with co-diameter at least $3$ (see \zcref{corollary:GKC2}). Notice that \zcref{conj:cochordal-diam-2} would imply that the generalized Kriesell's conjecture holds for the entire class of co-chordal graphs.

\section*{Acknowledgements}

This work is supported in part by the Slovenian Research and Innovation Agency (I0-0035, research program P1-0285 and research projects J1-60012, J1-70035, J1-70046, and N1-0370) and by the research program CogniCom (0013103) at the University of Primorska.

\printbibliography

\end{document}